\newcommand{\Rsym}{\R^{2\times 2}_{\mathrm{sym}}}
\newtheorem{theorem}{Theorem} \newtheorem{lemma}{Lemma}
\newtheorem{proposition}{Proposition} 
\newtheorem{definition}{Definition}
\theoremstyle{definition} 
\newtheorem{remark}{Remark}
\newcommand{\secf}{\boldsymbol{I\!I}}
\newcommand{\sym}{\mathrm{sym}}
\newcommand{\dist}{\operatorname{dist}} \newcommand{\supp}{\operatorname{supp}}
\newcommand{\e}{\varepsilon}
\newcommand\ecke{\mathop{\hbox{\vrule height 7pt width .3pt depth 0pt \vrule
height .3pt width 5pt depth 0pt}}\nolimits}
\newcommand{\R}{\mathbb{R}} 
\newcommand{\N}{\mathbb{N}} 
\renewcommand{\d}{\mathrm{d}} \renewcommand{\L}{\mathbb{L}}
\newcommand{\A}{\mathcal{A}} \newcommand{\M}{\mathcal{M}}
 \newcommand{\id}{\mathrm{Id}}
\newcommand{\F}{\mathcal F} 
 \renewcommand{\H}{\mathcal{H}}
\renewcommand{\L}{{\mathcal L}}
\newcommand{\NN}{\mathbf{N}}
\newcommand{\gr}{\mathrm{gr}}
\newcommand{\Rnsym}{\R^{n\times n}_{\mathrm{sym}}}
\newcommand{\Rs}{R^{\mathrm{sym}}}
\newcommand{\gm}{\mathbf{g}}
\title{On a $\Gamma$-limit of Willmore functionals with additional curvature
  penalization term} \date{\today} 
\author[H. Olbermann] {Heiner
Olbermann} 
\begin{document}
\maketitle

\begin{abstract}
We consider the Willmore functional on graphs, with an
additional penalization of the area where the curvature is
non-zero. Interpreting the penalization parameter as a Lagrange multiplier, this
corresponds to the Willmore functional with a constraint on the area where
the graph is flat. Sending the
penalization parameter to $\infty$ and rescaling suitably, we derive the limit
functional in the sense of $\Gamma$-convergence. 
\end{abstract}

\section{Introduction}

\subsection{Motivation: A constrained Willmore problem}

The motivation for the present paper comes from  a constrained Willmore
problem. More precisely, let us  consider smooth hypersurfaces $M\subset\R^3$  of a fixed topology,  with 
constraints on the
amount of surface area where the surface is flat and non-flat
respectively. Here, by flat we mean that  the second fundamental form vanishes. Within this class, we are
interested in the variational problem
\begin{equation}
\inf_M\int_M \left(H^2-2 K\right) \d \H^2=\inf_M\int_M(\kappa_1^2+\kappa_2^2)\d\H^2\,,\label{eq:28}
\end{equation}
where $\kappa_1$,
$\kappa_2$ denote  the principal curvatures, $H=\kappa_1+\kappa_2$  the mean curvature, $K=\kappa_1\kappa_2$ the Gauss curvature,  and $\H^2$ is the
two-dimensional Hausdorff measure.

\medskip

Here we are going to simplify this problem in two ways: First, we are not going
to consider arbitrary surfaces, but only  graphs.  Secondly, we are going to
replace the constraint of having a fixed amount of non-flat surface area by a
penalization of the non-flat part. This is the usual attempt of capturing
constraints via the introduction of Lagrange multipliers. We will however not be   able to prove a rigorous equivalence between the constrained variational
  problem and the problem involving Lagrange multipliers.

The latter consists in, for  $\lambda>0$ and a set of graphs $M$   with fixed
surface area, in  the variational problems
\begin{equation}
\inf_M\int_M \left(H^2-2 K\right) \d \H^2+\lambda \H^2\left(\{x\in M:S_M\neq 0\}\right)\,,\label{eq:29}
\end{equation}
where $S_M$ denotes the second fundamental form of $M$. Additionally, boundary conditions or other constraints on $M$ may be imposed.

\medskip

Obviously, the shape of  minimizers  for such
a problem depend on the  penalization parameter $\lambda$. One expects that the
concentration of curvature increases with  $\lambda$,  i.e.,  the area where the
surface is flat
 becomes larger as $\lambda$ increases (for configurations of low energy). The main purpose of the
present paper is a rigorous investigation of the limit $\lambda\to \infty$ for
the variational problem \eqref{eq:29}.

\subsection{Statement of main result}
For any Borel set $U\subset\R^n$, 
let
$\M(U)$ denote the set of signed Radon measures on $U$.   We denote
by $\M(U;\R^p)$ the $\R^p$ valued  Radon measures on $U$. Let $\R^{n\times n}_{\mathrm{sym}}$ denote the symmetric $n\times n$
matrices, and let
$\M(U;\R^{n\times n}_\sym)$ denote the space of measures with values in the symmetric matrices, i.e., $\{\mu\in\M(U;\R^{n\times
  n}):\mu_{ij}=\mu_{ji}\text{ for }i\neq j\}$. 
For $\mu\in \M(U;\R^{p})$, let $|\mu|$ denote the total variation
measure. 
For $\mu\in \M(U;\R^p)$, we have by the Radon-Nikodym differentiation Theorem (see
Theorem \ref{thm:RN} below) that for $|\mu|$-almost every $x\in U$, the derivative
$\d\mu/\d|\mu|$ exists.
For any one-homogeneous function $h:\R^{p}\to\R$ and any $\mu\in
\M(U;\R^{p})$, we may hence define 
\[
h(\mu)=h\left(\frac{\d\mu}{\d|\mu|}\right)\d|\mu|\,.
\]
This is a well defined Borel measure.

\medskip

For $\xi\in\Rsym$, let $\tau_1(\xi), \tau_2(\xi)$ denote the eigenvalues
of $\xi$. 
We set
\[
\rho^0(\xi):=\sum_{i=1}^2 |\tau_i(\xi)|\,.
\]
We will repeatedly use the following estimates:
\begin{equation}
|\xi|\leq \rho^0(\xi)\leq 2|\xi|\,.\label{eq:30}
\end{equation}
Note that $\rho^0:\Rsym\to\R$ is sublinear and positively one-homogeneous.

\medskip

Let $\Omega\subset\R^2$ be an open  bounded  set
with smooth boundary,
 and let $u\in BH(\Omega)$; that is the space of $u\in
W^{1,1}(\Omega)$ such that $\nabla u\in BV(\Omega;\R^2)$, where $\nabla u$
denotes the approximate gradient of $u$. 
We will use the usual notation for the $BV$ function $\nabla u$:
$J_{\nabla u}$ denotes the jump set of $\nabla u$. On $J_{\nabla u}$, there
exists a measurable function $\nu_{\nabla u}$ with values in $S^2$ such that $\nabla u$ has
well defined limits $\nabla u^\pm$ on both sides of the hyperplane defined by
$\nu_{\nabla u}$. The set $S_{\nabla u}$ is the singular set of $D\nabla u$, i.e., the set where
$D\nabla u$ is not absolutely continuous w.r.t. $\L^2$. (The operator ``$D$''
denotes the distributional derivative.)  Furthermore, $C_{\nabla
  u}:=S_{\nabla u}\setminus J_{\nabla u}$. We have the decomposition
\[
D\nabla u=\nabla^2 u \L^2+(\nabla u^+-\nabla u^-)\otimes\nu_{\nabla u} \ecke J_{\nabla u}+
D^s\nabla u \ecke C_{\nabla
  u}\,.
\]
Let $C_0(\Omega)$ denote the completion of $C_c^0(\Omega)$ with respect to the $\sup$-norm. We say that a sequence $\mu_j\in\mathcal M(\Omega)$ converges weakly * in the sense of measures to $\mu\in \mathcal M(\Omega)$ if for any $f\in C_0(\Omega)$, we have
\[
  \int_\Omega f\d\mu_j\to \int_{\Omega}f\d\mu\,.
\]
The convergence of vector-valued measures is defined analogously.
For a sequence $u_j\in BV(\Omega)$, we say that $u_j\to u$ weakly * in $BV$ if
$u_j\to u$ in $W^{1,1}$ and $D u_j\to Du$ weakly * in the sense of measures.

For $v\in\R^2$ and
$\xi\in\R^{2\times 2}$, we define
\[
\begin{split}
  \NN(v)&=\frac{1}{\sqrt{1+|v|^2}}\left(\begin{array}{c}v\\-1\end{array}\right)\,,\\
  \gm(v)&=\id_{2\times 2}+v\otimes v\,,\\
  \secf(v,\xi)&=\frac{1}{\sqrt{1+|v|^2}}\xi\\
  S(v,\xi)&=\gm(v)^{-1/2}\secf(v,\xi)\gm(v)^{-1/2}\,.
\end{split}
\]
By this definition, $S(\nabla u(x),\nabla^2 u(x))\in\Rsym$ is the second fundamental form (or
shape operator) of the graph of $u$ at $u(x)$ in matrix form (supposing $u$ is
sufficiently smooth at $x$); its eigenvalues are the
principal curvatures of the graph.
Let $F_\lambda:\R^{2\times 2}\to \R$ be defined by
\[
F_\lambda(\xi)=\begin{cases} 0 &\text{ if }\xi=0\\
|\xi|^2+\lambda &\text{ else.}\end{cases}
\]
We define $\F_\lambda: W^{2,2}(\Omega)\to[0,+\infty)$ by
\begin{equation}
  \F_\lambda(u)=\lambda^{-1/2}\int_{\Omega}F_\lambda(S(\nabla u,\nabla^2
  u))\sqrt{1+|\nabla u|^2}\d x\,.\label{eq:31}
  \end{equation}
Note that up to a normalizing factor, for smooth functions $u$, the right hand side is precisely the functional introduced in the
previous subsection,
\[
\lambda^{1/2}\F_\lambda(u)=\int_{\gr(u)}\left(H^2-2 K\right)\d\H^2+\lambda
  \H^2\left(\{x\in\gr(u):S_{\gr(u)}\neq 0\}\right)\,\,,
\]
where $\gr(u)$ denotes the graph of $u$.
For $u\in W^{2,2}(\Omega)$, the right hand side in \eqref{eq:31} is 
 finite, since the Willmore integrand
$\sqrt{1+|\nabla u|^2}|S|^2$ is bounded from above by $|\nabla^2 u|^2$ (see
Lemma \ref{lem:basicSestimate} below).
Let $\arccos:[-1,1]\to[0,\pi]$ be the inverse function of
$\cos:[0,\pi]\to[-1,1]$, and for $v=(v_1,v_2)^T\in \R^2$, let $v^\bot=(-v_2,v_1)^T$.  We define $\F: BH(\Omega)\to[0,+\infty)$ by
\[
\begin{split}
  \F(u)&=2\int_\Omega \rho^0(S(\nabla u,\nabla^2 u))\sqrt{1+|\nabla u|^2}\d x\\
  &\quad+2\int_{C_{\nabla u}} \rho^0\left(S\left(\nabla
      u,\frac{\d D\nabla u}{\d |D\nabla u|}\right)\right)\sqrt{1+|\nabla u|^2}\d
  |D\nabla u|\ecke C_{\nabla u}\\
&\quad+2\int_{J_{\nabla u}} \arccos \NN (\nabla u^+)\cdot \NN (\nabla
    u^-)\sqrt{1+|\nu_{\nabla u}^\bot\cdot\nabla u|^2}\d\H^1\,.
\end{split}
\]
Again, the right hand side always exists and is finite, since $\rho^0(S(v,\xi))\sqrt{1+|v|^2}\leq
2|\xi|$, and hence the integrands can be estimated by the Lebesgue regular, jump
and Cantor part of the measure $\rho(D\nabla u)$ respectively.
Finally, let us write $\A=BH(\Omega)\cap W^{1,\infty}(\Omega)$.

\medskip

Our main result is the following theorem, which establishes the
$\Gamma$-convergence $\F_\lambda\to \F$ in the weak-* topology of $BH(\Omega)$.

\begin{theorem}
\label{thm:main}
  \begin{itemize}
  \item[(i)] Let $u_\lambda$ be a sequence in $W^{2,2}(\Omega)$ with
    $\limsup_{\lambda\to\infty}\F_\lambda(u_\lambda)<\infty$, $\int_\Omega
    u_\lambda\d x=0$ and $\|\nabla u_\lambda\|_{L^\infty}<C$. Then there exists a
    subsequence (no relabeling) and $u\in\A$ such that

    \begin{equation}
u_\lambda\to u  \text{ in }W^{1,1}(\Omega), \quad\nabla u_\lambda\to \nabla
u\text{ weakly * in }BV(\Omega;\R^2)\,.\label{eq:22}
\end{equation}
\item[(ii)]
Let $u_\lambda$, $u$ be as in \eqref{eq:22}. Then we have
\[
\liminf_{\lambda\to\infty} \F_{\lambda}(u_{\lambda})\geq \F(u)\,.
\]
\item[(iii)]
Let $u\in \A$. Then there exists a sequence $u_\lambda$ such that \eqref{eq:22}
is fulfilled and 
\[
\limsup_{\lambda\to\infty}\F_\lambda(u_\lambda)\leq \F(u)\,.
\]
  \end{itemize}
\end{theorem}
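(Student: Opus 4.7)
The plan is to prove (i), (ii), (iii) in order: compactness from a pointwise AM-GM bound, the liminf via a blow-up and lower-semicontinuity argument localized on each stratum of $D\nabla u$, and the limsup via a polyhedral approximation whose creases are smoothed by the optimal one-dimensional arc profile.

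For (i), the pointwise AM-GM inequality $\lambda^{-1/2}(|\xi|^2+\lambda)\geq 2|\xi|$ applied to $\xi = S(\nabla u_\lambda,\nabla^2 u_\lambda)$ gives
\[
\F_\lambda(u_\lambda)\geq 2\int_\Omega |S(\nabla u_\lambda,\nabla^2 u_\lambda)|\sqrt{1+|\nabla u_\lambda|^2}\,dx.
\]
Since $S(v,\xi)\sqrt{1+|v|^2}=\gm(v)^{-1/2}\xi\,\gm(v)^{-1/2}$, and the uniform $L^\infty$-bound on $\nabla u_\lambda$ provides a positive lower bound for the smallest eigenvalue of $\gm(\nabla u_\lambda)^{-1}$, this controls $\int_\Omega|\nabla^2 u_\lambda|\,dx$. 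Weak-$*$ compactness in $BV$ combined with the mean-zero Poincar\'e inequality then yields a subsequence and a limit $u$ satisfying \eqref{eq:22}; the $L^\infty$-bound on $\nabla u_\lambda$ passes to the limit, placing $u\in\A$.

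For (ii), I would introduce the non-negative Radon measures
\[
\nu_\lambda := \lambda^{-1/2}F_\lambda(S(\nabla u_\lambda,\nabla^2 u_\lambda))\sqrt{1+|\nabla u_\lambda|^2}\,\L^2\ecke\Omega,
\]
so that $\nu_\lambda(\Omega)=\F_\lambda(u_\lambda)$, and extract a weak-$*$ subsequential limit $\nu\in\M(\overline\Omega)$. It then suffices to prove matching lower density bounds of $\nu$ against $\L^2$, $|D^c\nabla u|$, and $\H^1\ecke J_{\nabla u}$. Each bound is obtained by a Fonseca--M\"uller-type blow-up: at $\L^2$-a.e.\ point, $u_\lambda$ is asymptotically affine-plus-quadratic, and AM-GM applied in each principal direction of $S$, together with a local rank-one concentration analysis, produces the $\ell^1$-eigenvalue density $2\rho^0(S)\sqrt{1+|\nabla u|^2}$; at $\H^1$-a.e.\ jump point the blow-up exhibits two half-planes joined across a line, and the geodesic identity $\int|\dot{\NN}|\,dt=\arccos(\NN^+\cdot\NN^-)$ on $S^2$ combined with a one-dimensional AM-GM produces the factor $2\arccos(\NN^+\cdot\NN^-)$, while $\sqrt{1+|\nu_{\nabla u}^\bot\cdot\nabla u|^2}$ is the arc-length of the crease in the graph metric; at $|D^c\nabla u|$-a.e.\ Cantor point, Alberti's rank-one theorem reduces the local configuration to the one-dimensional situation handled in the jump case.

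For (iii), I approximate $u\in\A$ weakly-$*$ in $BH$ by polyhedral profiles $u^\delta\in\A$ whose graphs are planar pieces joined along a locally finite set of smooth creases, arranged so that two families of creases aligned to the local principal directions of $\nabla^2 u$ reproduce $\F(u)$ in the limit $\delta\to 0$. Each crease of length $L$ and dihedral angle $\theta=\arccos(\NN^+\cdot\NN^-)$ is then smoothed in a tube of width $\delta_\lambda\simeq\lambda^{-1/2}$ using the circular-arc profile that attains equality in AM-GM; a direct tube computation shows the smoothed crease contributes $2\theta L\sqrt{1+|\nu_{\nabla u}^\bot\cdot\nabla u|^2}+o(1)$ to $\F_\lambda(u_\lambda^\delta)$, and a diagonal argument in $(\delta,\lambda)$ produces the recovery sequence with the desired $\limsup$.

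The main obstacle is (ii): converting the pointwise AM-GM bound $2|S|$ into the sharper $2\rho^0$ on the absolutely continuous and Cantor strata, and extracting the geometric $2\arccos$ rather than the Euclidean $2|\nabla u^+-\nabla u^-|$ on the jump set. Both require careful identification of blow-up limits and optimal one-dimensional profiles rather than direct Reshetnyak lower semicontinuity, since the natural candidate measure $\gm(\nabla u_\lambda)^{-1/2}\nabla^2 u_\lambda\,\gm(\nabla u_\lambda)^{-1/2}\,\L^2$ has a coefficient that is discontinuous across $J_{\nabla u}$, and since the Frobenius norm $|\cdot|$ used in AM-GM and the nuclear norm $\rho^0$ agree only on rank-one configurations.
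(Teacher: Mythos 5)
Your parts (i) is fine and essentially the paper's argument (AM--GM plus the uniform gradient bound to control $\|\nabla^2 u_\lambda\|_{L^1}$, then $BH$-compactness). The genuine gap is in part (ii), on the absolutely continuous stratum. The pointwise AM--GM bound only gives $f_\lambda(v,\xi)\geq 2|S(v,\xi)|\sqrt{1+|v|^2}$ with the Frobenius norm, and the inequality $f_\lambda(v,\xi)\geq 2\rho^0(S(v,\xi))\sqrt{1+|v|^2}$ is simply false pointwise: for an umbilic $S=\mathrm{diag}(a,a)$ with $|a|\sim\sqrt{\lambda}$ one has $f_\lambda\approx 2\sqrt{2}\,|a|\sqrt{1+|v|^2}<2\rho^0(S)\sqrt{1+|v|^2}=4|a|\sqrt{1+|v|^2}$. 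So no ``AM--GM in each principal direction plus rank-one concentration'' can produce the density $2\rho^0(S)$; what is needed is the relaxation statement that the effective density at finite $\lambda$ is $Q_2f_\lambda$, whose explicit form $2\big(\rho^0(S)-|\det S|/\sqrt{\lambda}\big)\sqrt{1+|v|^2}$ (in the relevant regime) is obtained in the paper by a polyconvexity argument plus a laminate construction, together with a lower-semicontinuity result for $\int h_\lambda(v_0+\e w_\lambda,\nabla w_\lambda)$ under merely weak-$*$ $BV$ convergence. Because the integrands have $2$-growth for fixed $\lambda$, the lower-order term cannot be frozen by the standard Fonseca--M\"uller device; the paper needs a specific truncation of $w_\lambda$ at scale $\e^{-1/2}$ chosen on a good dyadic shell so that the cut-off does not increase the energy. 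None of this machinery (nor a substitute) appears in your proposal; you name it as ``the main obstacle'' but offer no argument. A secondary issue in (ii): at jump points the cell problem must be posed over \emph{gradients} (the paper's second-order blow-up lemma with the class $\mathcal A_{a,b,\nu}$), since the geometric computation of the cell formula via rotating/slicing the graph uses that the blow-up field is $\nabla w$; applying the first-order theorem to $v=\nabla u_\lambda$ gives a cell formula over all $W^{1,1}$ fields, which a priori could be strictly smaller than the claimed $2\sqrt{1+|a\cdot\nu^\bot|^2}\arccos\NN(a)\cdot\NN(b)$.

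Part (iii) is also not a proof as it stands, and it diverges from the paper's route (mollification of a $BH$-extension at scale $\e(\lambda)$ with $\|\nabla^2 u_\e\|_{L^\infty}<\sqrt{\lambda}/2$, blow-up estimates on each stratum including a dedicated proposition for the jump part, and then an abstract relaxation proposition to pass from $Q_2f_\lambda$ back to $f_\lambda$). Your crease-smoothing computation (radius $\lambda^{-1/2}$, cost $2\theta L$ per unit crease length) is the correct local optimum, but the construction hinges on an unproved and substantial claim: that every $u\in BH(\Omega)\cap W^{1,\infty}(\Omega)$ — whose curvature measure may have a nontrivial Cantor part — admits polyhedral approximations, with creases aligned to principal directions, converging weakly-$*$ in $BH$ \emph{with convergence of the energies} to $\F(u)$. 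That approximation statement is essentially as hard as the upper bound itself (it must reproduce the a.c.\ density $2\rho^0(S)\sqrt{1+|\nabla u|^2}$, the jump term, and the Cantor term), and it faces concrete obstructions you do not address: umbilic points and nonsmooth principal direction fields, vertex contributions where creases meet, and the diagonal compatibility between the polyhedral scale $\delta$ and the smoothing scale $\lambda^{-1/2}$ while keeping $u_\lambda\in W^{2,2}$ and \eqref{eq:22}. Either supply that approximation theorem or follow the paper's path, where the corresponding work is done by the fine-properties/mollification estimates and by the relaxation proposition with the laminate correctors.
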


\begin{remark}
  \begin{itemize}
  \item[(i)] For $u\in C^2(\Omega)$, the limit functional $\F$ can be written as
\begin{equation}
\label{eq:41}
\F(u)=\int_{\gr(u)}2\rho^0(S_{\gr(u)})\d\H^2\,,
\end{equation}
where  $\gr(u)$ denotes the graph of $u$. 
The formula for $\F$ from the statement of the theorem is a generalization
for surfaces whose second fundamental form is a measure. We note that graphs of
functions in $BH(\Omega)$ do not belong to the class of curvature varifolds as
defined in  \cite{hutchinson1986second,mantegazza1998curvature}. The latter do
not allow for a Cantor part in the curvature measure. 
\item[(ii)] For the ``geometrically linearized'' functionals
\[
\mathcal{G}_\lambda(u)=\lambda^{-1/2}\int_\Omega F_\lambda(\nabla^2 u)\d
x
\]
we have shown in \cite{olbermann2017michell} that the limit functional (again in
the sense of $\Gamma$-convergence) is given by $\mathcal
G(u)=2\int_\Omega\d\left(\rho^0(D\nabla u)\right)$. Here we merely replace the
second derivative $\nabla^2 u$ by the second fundamental form $S(\nabla
u,\nabla^2 u)$. However, the presence of lower order terms makes the analysis
more difficult for several reasons. There exist a few different techniques for the
proof of lower semicontinuity of integral functionals that depend on lower order
terms starting from the results without those terms, see
\cite{marcellini1985approximation,acerbi1984semicontinuity,fonseca1998analysis}. These
techniques do not work here since we consider  the convergence $\nabla
u_\lambda\to \nabla u$ weakly * in $BV$ (and not in $W^{1,p}$ with $p>1$ as in
the quoted references). The lower semicontinuity in $BV$ for integral functionals that
depend on lower order terms has been treated in \cite{MR1218685}. Their
technique cannot be applied in a straightforward way here either, the reason
being that for fixed $\lambda$ the integrands of our functionals have 2-growth
at infinity. Our technique will be a modification of the one from
\cite{MR1218685}, choosing a cutoff that despite the 2-growth does not increase
the energy by too much. 

Carrying on with the comparison of our result with the one in \cite{MR1218685},
we would like to point out that we are able to determine the form of the $\Gamma$-limit
on the jump part explicitly, which is not possible in the general situation
treated in \cite{MR1218685}. This requires the solution of a certain variational
problem that we obtain through some geometric considerations (see Section
\ref{sec:geometry}).

Concerning the upper bound, this is more difficult here than in
\cite{olbermann2017michell} again because of the presence of  lower order
terms. In that reference, the upper bound follows directly from well known
properties of approximations of $BV$ functions by mollification. Here, we need
to keep track of the behavior of the lower order terms in this approximation
process, for which we need to use some results on the fine properties of $BV$ functions.

\item[(iii)]
The requirement $\|\nabla u_\lambda\|_{L^\infty}<C$ in the
  compactness part of the theorem (statement (i)) may seem unnatural. Without such an
  assumption however, we are not able to obtain control of the $BH$-norm from
  the energy alone. This can be seen by considering graphs of functions with
  almost vertical parts. The energy of these almost vertical parts can be made
  arbitrarily small.  In this way, we might obtain functions of arbitrarily
  large $L^1$ norm with bounded energy. This can be considered as an artefact of
  the restriction to graphs, and shows that a geometric description would be
  more appropriate. This will be the topic of future work.

The requirement $\int_\Omega u_\lambda \d x=0$ is included in the statement (i) to enforce the
convergence $u_\lambda\to u$ in $L^1$. Without such an assumption, we would
still have the convergence $\nabla u_\lambda\to \nabla u$ weakly * in $BV$ (for
a subsequence).
\end{itemize}

\end{remark}

\subsection{Scientific context}
Vesicles of polyhedral shape play an important role in biology. Examples are
virus capsids \cite{caspar1962physical,lidmar2003virus}, carboxysomes \cite{yeates2008protein}, cationic-ionic vesicles, and assembled
supramolecular structures \cite{macgillivray1999structural}. 
In \cite{PNAS}, a model
for the formation of polyhedral structures 
based on minimization of the free elastic  energy of topologically spherical
shells has been suggested. In the model, the free energy
is a function of the deformation of the shell, and the material distribution of
the two elastic components that the shell is made of. 

Elastic inhomogeneities are known to exist in many virus
capsids and for carboxysomes; in both of these cases, the vesicle shell is made
up of different protein types. In \cite{PhysRevE.85.050501},
 it has been suggested that the inhomogeneities can  act as the
driving force for faceting.
In this reference, it is assumed that the vesicle wall consists of two
components, with different elastic properties (``soft'' and ``hard''), and the amount of soft and hard material available for the formation of the vesicle is fixed.
The variational problems \eqref{eq:28} and \eqref{eq:29},
interpreted as minimization problems for the free elastic energy, are models for
such two-phase vesicles. Following this interpretation, we investigate here the
limit in which the contrast between soft and hard phase is large (the hard
phase does not bend at all), and there is
a very small amount of soft material.

\subsection{Comparison to the analogous one-dimensional problem}

Consider the following variational problem, which is a lower dimensional analogue for problem
\eqref{eq:29}, with the topology fixed to be that of a
sphere instead of a graph:
\[
\begin{split}
  \inf \left\{\int_{M} \kappa^2 \d s+\lambda\H^1(\{x\in M:\kappa\neq 0\}):\text{ $M$ homeomorphic to $S^1$},\,
 \H^1(M)=2\pi\right\}
\end{split}
\]
Pulling the penalization term into the integral, we obtain
\[
\begin{split}
  \inf \left\{\int_{M} \tilde f_\lambda(\kappa) \d s :\text{ $M$ homeomorphic to $S^1$},\, \H^1(M)=2\pi\right\}\,,
\end{split}
\]
with
\[
\tilde f_\lambda(\kappa)=\begin{cases}\lambda+\kappa^2&\text{ if } \kappa\neq 0\\
0&\text{ else.}\end{cases}
\]
It is well known that such a problem requires relaxation to guarantee the
existence of minimizers. The relaxed problem is obtained by  replacing the integrand
with its convex lower semicontinuous envelope, 
\[
\tilde f_\lambda^{**}(\kappa)=\begin{cases}2\sqrt{\lambda}|\kappa|&\text{ if }|\kappa|\leq\sqrt{\lambda}\\
\lambda+\kappa^2&\text{ else.}
\end{cases}
\]
We see immediately  that the integrands $\lambda^{-1/2}\tilde f_\lambda^{**}$
are monotone decreasing, and converge to the function $\kappa\mapsto
2|\kappa|$. From this convergence, one  deduces without difficulty the
$\Gamma$-convergence of the respective integral functionals, with respect to weak *
convergence of the curvatures. The limit
functional $\F^{(1)}:M\mapsto 2\int_M|\kappa|\d s$ is also defined for  one-spheres whose
curvature is only a measure. Note that there is a large set of minimizers for
$\F^{(1)}$: Any one-sphere with non-negative curvature will be a
minimizer. 

\medskip

The situation in dimension two is completely different: From Theorem
\ref{thm:main}, it is natural to conjecture that one may define a limit
functional in the sense of $\Gamma$ convergence that for smooth surfaces is
given by \eqref{eq:41}.
For surfaces
of convex bodies, this
functional  is the same as the
total mean curvature. For sufficiently smooth surfaces, it is
known that the only minimizer of this functional within the class of topological
two-spheres is the round sphere, see
\cite{minkowski1989volumen,bonnesen1926quelques}. 

\subsection{Some notation, plan of the paper}
The
symbol ``$C$'' is used as follows: A statement such as ``$f\leq Cg$'' is shorthand for
``there exists a constant $C>0$  such that $f\leq
Cg$''. The value of $C$ may change within the same line. 
For $f\leq Cg$, we also write $f\lesssim g$.

 For $\xi\in \Rsym$, 
let $\tau_i(\xi)$, $i=1,2$ denote the eigenvalues of  $\xi$. We
denote the operator norm of $\xi$ by 
\[
|\xi|_{\infty}=\max(|\tau_1(\xi)|,|\tau_2(\xi)|)\,.
\]
The two-dimensional Lebesgue measure is denoted by $\L^2$, the $d$-dimensional
Hausdorff measure by $\H^d$. For $x=(x_1,x_2)^T\in\R^2$, we write
$x^\bot=(-x_2,x_1)^T$.

The following objects will be defined in $\R^n$, but most often we are going to consider  the case $n=2$. It will be obvious from the context when this special choice is made  and we will not mention it explicitly. (The symbol $\Omega$ will always denote a two-dimensional domain.) Let $Q=[-1/2,1/2]^n$ and for $\nu\in
S^{n-1}=\{x\in\R^n:|x|=1\}$, let $Q_\nu$ be a closed cube of sidelength one in $\R^n$, centered in the origin,  with one of its
sides parallel to $\nu$. (This defines $Q_\nu$ uniquely for $n=2$.) For a  set $K\in
\R^n$, $x_0\in\R^n$ and $\rho>0$,  we write $K(x_0,\rho)= x_0+\rho K$.
By $O(t)$, we denote terms $f(t)$ that satisfy $\limsup_{t\to \infty} t^{-1}|f(t)|<\infty$. We fix a radially symmetric function $\eta\in C^\infty_c(\R^n)$ such that $\supp \eta\subset B(0,1)$ and $\int_{\R^n}\eta\d x=1$, and define $\eta_\e:=\e^{-n}\eta(\cdot/\e)$.

\bigskip

The plan of the paper is as follows: In Section \ref{sec:preliminaries}, we will collect a number of theorems from the literature that we will apply later on. In Section \ref{sec:some-auxil-lemm}, we prove  some auxiliary lemmas, that will be used in Section \ref{sec:proof-main-theorem}, which contains the proof of the main theorem. In an attempt to increase readability, we have separated the part of the proof concerning the upper bound for points in the jump part from the rest  into a section on its own,  Section \ref{sec:proof-jump}. 
\section{Preliminaries}
\label{sec:preliminaries}
\subsection{Measures and BV functions}
Let $U\subset \R^n$ be open.

\begin{theorem}[Proposition 2.2 in \cite{ambrosio1992relaxation}]
\label{thm:RN}
  Let $\lambda, \mu$ be Radon measures in $U$ with $\mu\geq 0$. Then there
  exists a Borel set $E\subset U$ with $\mu(E)=0$ such that for any $x_0\in  \supp \mu\setminus E$ we have
\[
\lim_{\rho\downarrow 0} \frac{\lambda(x_0+\rho K)}{\mu(x_0+\rho
  K)}=\frac{\d\lambda}{\d\mu}(x_0)
\]
for any bounded convex set $K$ containing the origin. Here, the set $E$ is
independent of $K$.
\end{theorem}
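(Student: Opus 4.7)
The strategy is to reduce to the classical Lebesgue--Besicovitch differentiation theorem for $\mu$-balls, and then to transfer the limit to arbitrary bounded convex $K$ containing $0$ via a sandwich between two Euclidean balls, making the exceptional set independent of $K$ through a countable exhaustion by rational aspect ratios.

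First I would apply the Lebesgue--Radon--Nikodym decomposition $\lambda = f\mu + \lambda_s$, with $f := \d\lambda/\d\mu \in L^1_{\loc}(\mu)$ and $|\lambda_s|\perp\mu$. Writing
\[
\frac{\lambda(x_0+\rho K)}{\mu(x_0+\rho K)} \;=\; f(x_0) \;+\; \frac{1}{\mu(x_0+\rho K)}\int_{x_0+\rho K}(f-f(x_0))\,\d\mu \;+\; \frac{\lambda_s(x_0+\rho K)}{\mu(x_0+\rho K)},
\]
it is enough to show that, for $\mu$-a.e.\ $x_0$ and simultaneously for every admissible $K$, the second and third terms tend to $0$ as $\rho\downarrow 0$.

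Second, fix a bounded convex $K$ with $0$ in its interior and pick $0 < r < R$ with $B(0,r) \subset K \subset B(0,R)$. The sandwich $B(x_0,\rho r) \subset x_0 + \rho K \subset B(x_0,\rho R)$ gives
\[
\frac{1}{\mu(x_0+\rho K)}\int_{x_0+\rho K}|f-f(x_0)|\,\d\mu \;\leq\; \frac{\mu(B(x_0,\rho R))}{\mu(B(x_0,\rho r))}\cdot \frac{1}{\mu(B(x_0,\rho R))}\int_{B(x_0,\rho R)}|f-f(x_0)|\,\d\mu,
\]
and the analogous estimate with $|\lambda_s|$ in place of $|f-f(x_0)|\mu$. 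The classical differentiation theorem for $\mu$-balls (applied to $f$ and to $|\lambda_s|$) ensures that, outside a $\mu$-null set, the second factor tends to zero as $\rho \downarrow 0$. Everything therefore hinges on controlling the first factor.

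The main obstacle is the asymptotic doubling estimate: for each rational $C > 1$, I would prove that
\[
E_C := \Bigl\{x \in \supp\mu : \limsup_{\rho \downarrow 0} \frac{\mu(B(x,C\rho))}{\mu(B(x,\rho))} = +\infty\Bigr\}
\]
is $\mu$-negligible. This is a standard Besicovitch covering argument: if $\mu(E_C) > 0$, then for every integer $k$ one may extract, inside a compact neighborhood of $E_C$, disjoint balls $B(x_j,\rho_j)$ covering a dimensional fraction of $E_C$ (Besicovitch's constant) and satisfying $\mu(B(x_j,C\rho_j)) > k\,\mu(B(x_j,\rho_j))$, contradicting local finiteness of $\mu$ for $k$ large. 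Setting
\[
E := \Bigl(\bigcup_{C \in \mathbb{Q}\cap(1,\infty)} E_C\Bigr) \cup \{x : x\text{ is not a }\mu\text{-Lebesgue point of }f\} \cup \{x : \lambda_s(B(x,\rho))/\mu(B(x,\rho))\not\to 0\},
\]
which is a countable union of $\mu$-null sets, I obtain a single exceptional set outside of which the convergence holds for every bounded convex $K\ni 0$: any such $K$ can be sandwiched with rational $r < R$ whose ratio $R/r$ is dominated by some $C$ appearing in the union defining $E$.
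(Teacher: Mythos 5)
Your reduction (Radon--Nikodym decomposition plus the sandwich $B(x_0,\rho r)\subset x_0+\rho K\subset B(x_0,\rho R)$) makes the whole argument stand or fall with the asymptotic doubling lemma, i.e.\ with the claim that $E_C=\{x\in\supp\mu:\limsup_{\rho\downarrow0}\mu(B(x,C\rho))/\mu(B(x,\rho))=+\infty\}$ is $\mu$-negligible. That claim is false for general Radon measures: a Radon measure need not be asymptotically doubling at $\mu$-a.e.\ point. For a counterexample, build a Cantor-type measure on a segment: generation-$k$ intervals have length $\ell_k=100^{-k}$, each generation-$(k-1)$ interval $P$ carries two generation-$k$ children at its two endpoints, and its mass is split between them in ratio $k:1$, the light child being always the left one. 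Under $\mu$ the digits are independent and the $k$-th digit selects the light child with probability $1/(k+1)$; since $\sum_k 1/(k+1)=\infty$, Borel--Cantelli shows that $\mu$-a.e.\ $x$ lies in the light child $L$ of its generation-$(k-1)$ interval $P$ for infinitely many $k$. For such $k$, with $\rho_k=0.9\,\ell_{k-1}$ one checks $B(x,\rho_k)\cap\supp\mu\subset L$, hence $\mu(B(x,\rho_k))\le\mu(P)/(k+1)$, while $B(x,4\rho_k)\supset P$, so $\mu(B(x,4\rho_k))/\mu(B(x,\rho_k))\ge k+1$. Thus with $C=4$ the quotient blows up along $\rho_k\downarrow0$ at $\mu$-a.e.\ point. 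What is true in general is only that the \emph{liminf} of the doubling quotient is finite a.e.\ (because the lower local dimension is at most $n$ a.e.), which does not help since you need control for \emph{all} small $\rho$. Consistently with this, your covering argument for $E_C$ does not close: the enlarged balls $B(x_j,C\rho_j)$ of a disjoint family can overlap with unbounded multiplicity, so a large value of $\sum_j\mu(B(x_j,C\rho_j))$ does not contradict local finiteness of $\mu$.

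The paper does not prove this statement; it quotes Proposition 2.2 of Ambrosio--Dal Maso, and the proof there avoids exactly the step that fails for you: instead of transferring the ball result to convex sets via a comparison of concentric balls, one runs the Besicovitch differentiation argument \emph{directly} on the families $\{x+\rho K\}$, using the Besicovitch--Morse covering theorem, which is valid for families of sets of uniformly bounded eccentricity (each $x+\rho K$ contains a ball of radius comparable to its diameter), with constants depending only on $n$ and the eccentricity bound $\Lambda$. This gives, for each $\Lambda\in\N$, a $\mu$-null set $E_\Lambda$ outside of which the Lebesgue-point property of $f$ and the vanishing of $\lambda_s(x+\rho K)/\mu(x+\rho K)$ hold simultaneously for all admissible $K$ with eccentricity at most $\Lambda$ --- these statements are proved with respect to the convex sets themselves, not imported from balls --- and $E=\bigcup_{\Lambda\in\N}E_\Lambda$ is then independent of $K$. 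So your countable-exhaustion idea is the right one, but it must be applied at the level of the covering theorem, where no doubling is needed. Note also that the statement only requires $0\in K$, not $0\in\operatorname{int}K$ (so $x_0$ may be a boundary point of $x_0+\rho K$); your ball sandwich cannot even be set up in that case, whereas the Morse-type covering theorem accommodates it.
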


\begin{theorem}[Theorem 2.3 in \cite{ambrosio1992relaxation}]
  \label{thm:BVblow}
Let $u\in BV(U;\R^m)$ and for a bounded convex open set $K$ containing the
origin, and let $\xi$ be the density of $Du$ with respect to $|Du|$,
$\xi=\frac{\d(Du)}{\d(|Du|)}$.
For $x_0\in \supp (|Du|)$, assume that $\xi(x_0)=\eta\otimes \nu$ with $\eta\in\R^m$,
$\nu\in\R^n$, $|\eta|=|\nu|=1$, and for
$\rho>0$ let 
\[
v^{(\rho)}(y)=\frac{\rho^{n-1}}{|Du|(x_0+\rho K)}\left(u(x_0+\rho
y)-\fint_{x_0+\rho K}u(x')\d x'\right)\,.
\]
Then for every $\sigma\in (0,1)$ there exists a sequence $\rho_j$ converging to
0 such that $v^{(\rho_j)}$ converges in $L^1(K;\R^m)$ to a function $v\in
BV(K;\R^m)$ which satisfies $|Dv|(\sigma \overline{K})\geq \sigma^n$ and can be
represented as 
\[
v(y)=\psi(y\cdot \nu)\eta
\]
for a suitable non-decreasing function $\psi:(a,b)\to \R$, where $a=\inf\{y\cdot
\nu:y\in K\}$ and $b=\sup\{y\cdot
\nu:y\in K\}$.
\end{theorem}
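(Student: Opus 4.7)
The plan is to follow the standard blow-up strategy for $BV$ functions, extracting compactness from the normalization, identifying the limit via the rank-one Radon--Nikodym structure at $x_0$, and finally establishing the lower mass bound by choosing scales carefully.

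First I would record the two effects of the normalization. A change of variables $x = x_0 + \rho y$ shows that $Dv^{(\rho)}$ is the push-forward of $Du/|Du|(x_0+\rho K)$ under $x \mapsto (x-x_0)/\rho$, so $|Dv^{(\rho)}|(K) = 1$, and by construction $\int_K v^{(\rho)}\,\mathrm dy = 0$. Poincaré--Wirtinger then yields $\|v^{(\rho)}\|_{L^1(K)} \le C$, so $\{v^{(\rho)}\}$ is bounded in $BV(K;\R^m)$. The compact embedding $BV(K) \hookrightarrow L^1(K)$ delivers a subsequence $v^{(\rho_j)} \to v$ in $L^1(K;\R^m)$ with $v \in BV(K;\R^m)$ and $Dv^{(\rho_j)} \wsto Dv$ weakly* as measures.

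Next I would identify the structure of $Dv$. Here the strength of Theorem~\ref{thm:RN} is crucial: the exceptional set $E$ on which the Radon--Nikodym differentiation of $Du$ with respect to $|Du|$ fails is independent of the convex test set. Hence, for $x_0 \in \supp|Du|\setminus E$, and for every bounded convex $K'$ containing the origin,
\[
\lim_\rho \frac{Du(x_0+\rho K')}{|Du|(x_0+\rho K')} \;=\; \xi(x_0) \;=\; \eta\otimes\nu.
\]
Extracting a further (diagonal) subsequence so that $|Du|(x_0+\rho_j K')/|Du|(x_0+\rho_j K)$ converges for $K'$ ranging over a countable dense family of convex subsets of $K$ with null boundary, we obtain a nonnegative Radon measure $\mu_\infty$ on $K$ with $Dv = (\eta\otimes\nu)\,\mu_\infty$. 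From $\partial_i v_\alpha = \eta_\alpha \nu_i \mu_\infty$ it follows that $\tau\cdot\nabla v = 0$ distributionally for every $\tau \perp \nu$; since $K$ is convex, its slices by hyperplanes normal to $\nu$ are connected, so $v(y) = \eta\,\psi(y\cdot\nu)$ for some $\psi:(a,b)\to\R$, and because $\mu_\infty \geq 0$, $\psi$ is non-decreasing.

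The main obstacle is the lower bound $|Dv|(\sigma\overline K) \geq \sigma^n$. Setting $t_\rho := |Du|(x_0+\rho K)$, the change-of-variables computation gives the identity $|Dv^{(\rho)}|(\sigma K) = t_{\sigma\rho}/t_\rho$, so I would reduce the bound to producing a sequence $\rho_j\to 0$ with $t_{\sigma\rho_j}/t_{\rho_j}\geq\sigma^n$. The difficulty is that $|Du|$ need not have well-defined $n$-dimensional density, and the ratio $t_{\sigma\rho}/t_\rho$ may oscillate or drop below $\sigma^n$ for many $\rho$. The resolution is a selection argument exploiting monotonicity of $t_\rho$: if the desired inequality failed for all sufficiently small $\rho$, iteration would yield $t_{\sigma^k\rho_0} < \sigma^{nk}\,t_{\rho_0}$, and a Besicovitch-type argument on the family of convex sets $\{\sigma^k K\}$, combined with the fact that $x_0 \in \supp|Du|$ lies outside the exceptional set $E$, yields a contradiction for $|Du|$-a.e.\ $x_0$. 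Choosing the subsequence $\rho_j$ according to this selection, weak* lower semicontinuity of the total variation gives
\[
|Dv|(\sigma\overline K) \;\geq\; \limsup_j |Dv^{(\rho_j)}|(\sigma K) \;\geq\; \sigma^n,
\]
completing the proof. Finally, one verifies that the same subsequence realizes both the $L^1$-convergence and the mass bound via a diagonal extraction.
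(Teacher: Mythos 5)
The paper does not prove this statement; it is quoted verbatim from Ambrosio--Dal Maso, so there is no in-paper proof to compare with and your argument has to stand on its own. Your first two steps are essentially the standard ones and are fine in outline: the normalization gives $|Dv^{(\rho)}|(K)=1$ and zero mean, Poincar\'e plus $BV$-compactness give the $L^1$-limit, and the rank-one identification $Dv=(\eta\otimes\nu)\mu_\infty$ with $\mu_\infty\geq 0$ yields $v=\eta\,\psi(y\cdot\nu)$ with $\psi$ non-decreasing (note only that Theorem \ref{thm:RN} as stated applies to convex sets containing the origin, so the ``countable dense family of convex subsets of $K$'' needs a slightly more careful formulation).

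The genuine gap is in the bound $|Dv|(\sigma\overline K)\geq\sigma^n$, on both counts. First, your closing inequality is justified by ``weak* lower semicontinuity of the total variation'', but semicontinuity runs the other way: it gives $|Dv|(A)\leq\liminf_j|Dv^{(\rho_j)}|(A)$ on open sets, i.e.\ an upper bound on the limit mass, and by itself it cannot prevent loss of mass through cancellation. What is actually needed is that, since the polar of $Dv^{(\rho_j)}$ converges to the constant matrix $\eta\otimes\nu$ in $|Dv^{(\rho_j)}|$-mean, one has $|Dv^{(\rho_j)}|\to|Dv|$ weakly* in $K$ (no cancellation), and then the upper bound on compact sets for weakly* convergent nonnegative measures gives $|Dv|(\sigma\overline K)\geq\limsup_j|Dv^{(\rho_j)}|(\sigma\overline K)$; as written, your step is backwards. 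Second, the selection of scales with $t_{\sigma\rho_j}\geq\sigma^n t_{\rho_j}$, $t_\rho:=|Du|(x_0+\rho K)$, is not established: negating it only yields $t_{\sigma^k\rho_0}<\sigma^{nk}t_{\rho_0}$, i.e.\ $t_\rho=O(\rho^n)$, which is not contradictory, and no ``Besicovitch-type argument'' can produce a contradiction at an arbitrary $x_0\in\supp|Du|$ with rank-one $\xi(x_0)$. Concretely, for $u(x)=|x_1|^{\delta}x_1/(1+\delta)$ and $x_0=0$ one has $\xi\equiv e_1$, $t_{\sigma\rho}/t_\rho=\sigma^{n+\delta}<\sigma^n$ for \emph{every} $\rho$, and the ($\rho$-independent) blow-up satisfies $|Dv|(\sigma\overline K)=\sigma^{n+\delta}<\sigma^n$; so the selection genuinely fails there, and your hedge ``for $|Du|$-a.e.\ $x_0$'' does not match the pointwise statement you set out to prove (it also shows that the quoted statement tacitly carries the density context of the original reference). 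The selection does work where $t_\rho/\rho^n\to\infty$, which holds $|D^s u|$-a.e.\ --- exactly where the theorem is invoked in the paper --- while at $\mathcal{L}^n$-a.e.\ point one argues instead via approximate differentiability; but as written, the decisive step of your proof is missing and the inequality you do write down rests on a semicontinuity principle pointing in the wrong direction.
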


\medskip

When considering the blow-up of measures, the following special case of Theorem
0.1 in \cite{delladio1991lower} will be useful:

\begin{theorem}
  \label{thm:delladio}
  Let $\{\mu_j\}_j,\mu\in \mathcal M(U;\R^p)$, such that
  \[
    \mu_j\to \mu \quad \text{ and }\quad
    |\mu_j|\to|\mu|\quad\text{ weakly * in the sense of measures.}
  \]
  Furthermore, let $h:\R^p\to\R$ be positively one-homogeneous. Then
  \[
    h(\mu_j)\to h(\mu)  \quad\text{ weakly * in the sense of measures.}
    \]
\end{theorem}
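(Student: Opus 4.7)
The goal is to show that for every $\phi\in C_0(U)$,
\[
\int_U \phi(x)\, h\!\left(\tfrac{\d\mu_j}{\d|\mu_j|}(x)\right)\d|\mu_j|(x) \longrightarrow \int_U\phi(x)\, h\!\left(\tfrac{\d\mu}{\d|\mu|}(x)\right)\d|\mu|(x).
\]
Setting $\sigma_j:=\d\mu_j/\d|\mu_j|$ and $\sigma:=\d\mu/\d|\mu|$, which by Theorem \ref{thm:RN} take values in $S^{p-1}$ almost everywhere with respect to the respective total-variation measure, the plan is to lift the problem to joint measures on $U\times S^{p-1}$. Specifically, introduce the non-negative Radon measures $\nu_j,\nu\in\M(U\times S^{p-1})$ via
\[
\int f\,\d\nu_j := \int_U f(x,\sigma_j(x))\,\d|\mu_j|(x),\quad \int f\,\d\nu := \int_U f(x,\sigma(x))\,\d|\mu|(x),\qquad f\in C_0(U\times S^{p-1}).
\]
The conclusion then amounts to testing the weak * convergence $\nu_j\to\nu$ against the test function $(x,\omega)\mapsto \phi(x)h(\omega)$, which belongs to $C_0(U\times S^{p-1})$ since $h$ is (continuous and) positively one-homogeneous, hence bounded on $S^{p-1}$.

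I would then establish the weak * convergence $\nu_j\to \nu$ via sequential compactness. The uniform bound $M:=\sup_j |\mu_j|(U) = \sup_j \nu_j(U\times S^{p-1})<\infty$ follows from the weak * convergence $|\mu_j|\to |\mu|$ via Banach--Steinhaus applied in $C_0(U)^*$. Tightness is reduced, by a standard cutoff argument restricting first to $\phi\in C_c(U)$, to the observation that the relevant parts of the $\nu_j$ live on $K\times S^{p-1}$ for some fixed compact $K\subset U$. Consequently, every subsequence of $(\nu_j)$ admits a further weak * convergent subsequence $\nu_{j_k}\to\tilde\nu$, and it suffices to show $\tilde\nu=\nu$; the full convergence $\nu_j\to\nu$ then follows by the standard subsequence principle.

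To identify $\tilde\nu$, I would test against product functions $\phi(x)g(\omega)$ in two key cases. For $g\equiv 1$, the hypothesis $|\mu_j|\to|\mu|$ weakly * forces the marginal of $\tilde\nu$ on $U$ to equal $|\mu|$, so we may disintegrate $\tilde\nu(\d x,\d\omega)=|\mu|(\d x)\,\tilde\nu_x(\d\omega)$ with $\tilde\nu_x\in\M(S^{p-1})$ a probability measure for $|\mu|$-a.e.\ $x$. For $g(\omega)=w\cdot\omega$ with arbitrary $w\in\R^p$, the hypothesis $\mu_j\to\mu$ weakly * gives
\[
\int_U\phi(x)\!\int_{S^{p-1}}(w\cdot\omega)\,\d\tilde\nu_x(\omega)\,\d|\mu|(x) = \int_U\phi(x)\,w\cdot\d\mu(x) = \int_U\phi(x)\,(w\cdot\sigma(x))\,\d|\mu|(x),
\]
and hence, since $\phi$ and $w$ are arbitrary, $\int_{S^{p-1}}\omega\,\d\tilde\nu_x(\omega)=\sigma(x)\in S^{p-1}$ for $|\mu|$-a.e.\ $x$. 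Because $\tilde\nu_x$ is a probability measure on $S^{p-1}$ whose barycenter has unit norm, strict convexity of the Euclidean unit ball (saturation of Jensen's inequality) forces $\tilde\nu_x=\delta_{\sigma(x)}$, whence $\tilde\nu=\nu$.

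The main obstacle is precisely this identification step: the two weak * convergences in the hypothesis must be combined to pin down the \emph{angular} part of the joint weak * limit, and the strict convexity of the Euclidean ball is the geometric ingredient that does the work. Once $\tilde\nu=\nu$ is obtained, no further approximation of $h$ by polynomial or smooth functions on $S^{p-1}$ is required, which is a noteworthy simplification compared with more classical routes to Reshetnyak-type continuity theorems that proceed via Stone--Weierstrass density on the sphere.
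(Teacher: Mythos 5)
This statement is not proved in the paper at all: it is imported as a special case of Theorem 0.1 of \cite{delladio1991lower}, so there is no internal proof to compare against. Your argument is a correct, self-contained proof, and it follows the classical route to Reshetnyak-type continuity theorems: lift $|\mu_j|$ to measures $\nu_j$ on $U\times S^{p-1}$ via $x\mapsto(x,\sigma_j(x))$, extract a weak-* limit $\tilde\nu$ of a subsequence, identify the $U$-marginal of $\tilde\nu$ with $|\mu|$ using the hypothesis $|\mu_j|\to|\mu|$, identify the barycenter of the disintegration $\tilde\nu_x$ with $\sigma(x)=\frac{\d\mu}{\d|\mu|}(x)$ using $\mu_j\to\mu$, and conclude $\tilde\nu_x=\delta_{\sigma(x)}$ from strict convexity of the unit ball; testing $\nu_j\to\nu$ against $\phi\otimes h|_{S^{p-1}}\in C_0(U\times S^{p-1})$ then gives the claim, and the subsequence principle upgrades this to the full sequence. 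The citation buys brevity and the more general statement in \cite{delladio1991lower}; your proof buys self-containedness and makes transparent why the weak-* (rather than strict, i.e.\ total-mass) hypothesis suffices here: both hypotheses and conclusion involve only duality with $C_0$, so possible escape of mass towards $\partial U$ is harmless and is exactly absorbed in the identification of the marginal with $|\mu|$. Two small points to tidy up: continuity of $h$ must be assumed explicitly (positive one-homogeneity alone does not imply it; it is implicit in Delladio's theorem and holds for the integrands used in this paper), and the tightness/cutoff discussion is superfluous --- since the $\nu_j$ are nonnegative with uniformly bounded mass (Banach--Steinhaus applied to $|\mu_j|$ in $C_0(U)^*$) and you only ever test against $C_0(U\times S^{p-1})$, Banach--Alaoglu already yields the weak-* convergent subsequence, after which your marginal and barycenter identifications go through verbatim.
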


We recall that $BH(U)$ denotes the set of functions $u\in W^{1,1}(U)$ such that
$\nabla u\in BV(U;\R^n)$.

The set 
$BH(U)$
can be made into a normed space by setting
\[
\|u\|_{BH(U)}=\|u\|_{W^{1,1}(U)}+|D\nabla u|(U)\,.
\]
We say that a sequence $u_j\in BH(U)$ converges weakly * to $u\in
BH(U)$ if $u_j\to u$ in $W^{1,1}(U)$ and $D\nabla u_j\to D\nabla  u$ weakly * in
$\M(U;\R^{n\times n})$.

\begin{theorem}[\cite{demengel1989compactness}]
\label{thm:BHcompact}
Let $u_j$ be a bounded
  sequence in $BH(U)$. Then there exists a subsequence (no relabeling) and
  $u\in BH(U)$ such that
\[
u_j\to u\quad\text{ weakly * in }BH(U)\,.
\]
\end{theorem}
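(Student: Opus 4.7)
The plan is to reduce the weak-* compactness of $BH(U)$ to the standard weak-* compactness of $BV(U;\R^n)$ applied to the sequence of gradients, then to glue back an $L^1$-limit of the functions themselves.

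First I would observe that boundedness in $BH(U)$ gives directly
\[
\|\nabla u_j\|_{L^1(U;\R^n)} + |D\nabla u_j|(U) \leq \|u_j\|_{BH(U)} \leq C,
\]
so $\{\nabla u_j\}$ is bounded in $BV(U;\R^n)$. Assuming $U$ has a sufficiently regular boundary (which is the setting of the paper, since $\Omega$ is bounded with smooth boundary), the standard $BV$ compactness theorem yields a subsequence and some $v \in BV(U;\R^n)$ such that $\nabla u_j \to v$ strongly in $L^1(U;\R^n)$ and $D\nabla u_j \wsto Dv$ in $\M(U;\R^{n\times n})$.

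Second, the sequence $u_j$ is bounded in $W^{1,1}(U)$, so by Rellich--Kondrachov I would extract a further subsequence so that $u_j \to u$ in $L^1(U)$ for some $u \in L^1(U)$. The key verification is then that $v = \nabla u$ in the sense of distributions: for any $\varphi \in C_c^\infty(U)$ and any index $i$,
\[
\int_U u\, \partial_i \varphi \,\d x \;=\; \lim_j \int_U u_j\, \partial_i \varphi\, \d x \;=\; -\lim_j \int_U (\nabla u_j)_i \varphi\, \d x \;=\; -\int_U v_i \varphi\, \d x,
\]
using the $L^1$-convergences on both sides. Hence $u \in W^{1,1}(U)$ with $\nabla u = v$, and since $v \in BV(U;\R^n)$, we conclude $u \in BH(U)$.

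Finally, putting the pieces together: $u_j \to u$ in $L^1$ and $\nabla u_j \to \nabla u$ in $L^1$ give $u_j \to u$ in $W^{1,1}(U)$; combined with $D\nabla u_j \wsto D\nabla u$, this is precisely the definition of weak-* convergence in $BH(U)$. The main (and really only) subtlety is invoking the $BV$-compactness theorem for the sequence $\nabla u_j$, which requires sufficient regularity of $\partial U$; this is implicit in the applications of interest where $U = \Omega$ has smooth boundary. Everything else is a routine book-keeping argument identifying the weak gradient of the $L^1$-limit with the $L^1$-limit of the gradients.
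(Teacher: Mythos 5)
Your argument is correct, and it is worth noting that the paper offers no proof of this statement at all: Theorem \ref{thm:BHcompact} is simply quoted from \cite{demengel1989compactness}. Your reduction is the standard one and matches how one would expect the cited result to be proved: boundedness of $\|u_j\|_{BH}$ gives a $BV$-bounded sequence $\nabla u_j$, $BV$-compactness yields $\nabla u_j\to v$ in $L^1$ together with $D\nabla u_j\to Dv$ weakly * (the weak-* convergence of the measures following from the $L^1$-convergence and the uniform bound on $|D\nabla u_j|(U)$), a further subsequence gives $u_j\to u$ in $L^1$, and the distributional identification $v=\nabla u$ closes the loop, so that $u_j\to u$ in $W^{1,1}$ and $D\nabla u_j\to D\nabla u$ weakly *, which is exactly the paper's definition of weak-* convergence in $BH$. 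The one point to be explicit about is that \emph{both} compactness ingredients --- the strong $L^1$-compactness in $BV$ for $\nabla u_j$ and the compact embedding $W^{1,1}(U)\hookrightarrow L^1(U)$ for $u_j$ --- require $U$ to be a bounded extension domain (e.g., bounded with Lipschitz boundary); the blanket hypothesis in Section \ref{sec:preliminaries} is merely ``$U$ open'', so as literally stated the theorem needs this additional assumption, but in the only place it is used ($U=\Omega$, bounded with smooth boundary, in the proof of Theorem \ref{thm:main}(i)) it is satisfied, as you correctly flag.
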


Now let us assume that $U$ has smooth boundary. The trace operator
\[
\gamma_0:u\mapsto u|_{\partial U }
\]
is linear surjective as a map $W^{1,1}( U )\to L^1(\partial U )$ and also as a map $BV( U )\to
L^1(\partial  U )$. For the spaces
$W^{2,1}( U )$ and $BH( U )$, we may also  consider the operator
\[
\gamma_1:u\mapsto  \nabla u|_{\partial U }\cdot n\,,
\]
where $n$ denotes the unit outer normal of $\partial U $.
The following theorem combines statements from    Chapter 2 and  the appendix of \cite{demengel1984fonctions}.
\begin{theorem}
  \label{thm:traceop}
  \begin{itemize}
  \item[(i)] The operator 
$(\gamma_0,\gamma_1)$ is linear surjective  as a map 
\[
BH( U )\to \gamma_0(W^{2,1}( U ))\times L^1(\partial U )\,.
\]
\item[(ii)] There exists  a continuous right inverse 
\[
\gamma_0(W^{2,1}( U ))\times L^1(\partial U )\to W^{2,1}( U )\,.
\]
  \end{itemize}

\end{theorem}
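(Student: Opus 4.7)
The plan is to reduce part (i) to a single building-block construction; part (ii) then follows from a functional-analytic selection principle. Given $(g_0, g_1) \in \gamma_0(W^{2,1}(U)) \times L^1(\partial U)$, first select $w \in W^{2,1}(U)$ with $\gamma_0 w = g_0$ (possible by the very definition of the first factor) and set $h := g_1 - \gamma_1 w \in L^1(\partial U)$. The task in (i) reduces to constructing $v \in BH(U)$ with $\gamma_0 v = 0$ and $\gamma_1 v = h$, after which $u := w + v$ is the required preimage.

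\medskip

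\emph{Core construction.} Using a partition of unity subordinate to charts that flatten $\partial U$ to $\{y_n = 0\}$ with $U$ locally equal to $\{y_n > 0\}$, I reduce to the local problem for $h \in L^1(\R^{n-1})$ compactly supported. The naive choice $v(y', y_n) = y_n \chi(y_n) h(y')$ fails for $h \in L^1 \setminus W^{1,1}$, since then $\partial_{y'} v = y_n \chi(y_n) \partial_{y'} h$ is only a distribution and $v \notin W^{1,1}$. Instead, telescope through mollifications: set $h_0 := 0$, $h_j := h \ast \rho_{\e_j}$ with $\e_j \downarrow 0$ chosen so that $\|h_j - h_{j-1}\|_{L^1} \leq 2^{-j}$, and with a smooth cutoff $\chi \in C^\infty_c([0,1))$, $\chi(0) = 1$, define
\[
v(y', y_n) := \sum_{j \geq 1} (h_j - h_{j-1})(y') \, y_n \, \chi(y_n / \delta_j),
\]
for widths $\delta_j > 0$ to be chosen. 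Denoting the $j$th summand by $v_j$, direct computation yields
\[
\|\partial_n^2 v_j\|_{L^1} \lesssim \|h_j - h_{j-1}\|_{L^1}, \quad \|\partial_{y'}\partial_n v_j\|_{L^1} \lesssim \delta_j \|\nabla h_j\|_{L^1}, \quad \|\nabla_{y'}^2 v_j\|_{L^1} \lesssim \delta_j^2 \|\nabla^2 h_j\|_{L^1}.
\]
Combining the Young-type bound $\|\nabla^k h_j\|_{L^1} \lesssim \e_j^{-k} \|h\|_{L^1}$ with the choice $\delta_j := 2^{-j} \e_j$ makes all three series summable, as well as the $W^{1,1}$-norm contributions; hence $v \in BH(U)$. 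The trace computations give $\gamma_0 v = 0$ (each summand vanishes at $y_n = 0$) and $\gamma_1 v = \sum_j (h_j - h_{j-1}) \chi(0) = \lim_j h_j = h$ in $L^1(\partial U)$.

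\medskip

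\emph{Main obstacle and part (ii).} The central difficulty is reconciling the $L^1$-only regularity of $h$ (no tangential derivatives available) with the $BV$-requirement on $\nabla v$; the telescoping construction resolves it by shrinking the slab widths $\delta_j$ fast enough to absorb the blow-up of $\|\nabla^k h_j\|_{L^1}$ under mollification. Conceptually, this is possible precisely because $BH$ allows $\nabla v$ to develop jumps across hypersurfaces (a flexibility absent in $W^{2,1}$); the construction concentrates such behavior in a thin slab near $\partial U$. For part (ii), a continuous right inverse of $(\gamma_0, \gamma_1)$ can be obtained from the Bartle--Graves selection theorem applied to the bounded linear surjection of part (i); alternatively, a more direct route is to compose a bounded linear right inverse of $\gamma_0 : W^{2,1}(U) \to \gamma_0(W^{2,1}(U))$ (supplied by the open mapping theorem) with a linear version of the lift $h \mapsto v$ constructed above, where linearity is achieved by prescribing a uniform schedule $(\e_j, \delta_j)$ in advance and extending by density from $C^\infty_c(\partial U)$.
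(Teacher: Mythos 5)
The paper itself gives no proof of this theorem (it is quoted from Demengel's work on functions with bounded Hessian), so your self-contained Gagliardo-type lifting is a genuinely different, and in principle welcome, route. Your part (i) is essentially correct: after subtracting a $W^{2,1}$ extension $w$ of $g_0$, the telescoped lift $v=\sum_{j\ge1}(h_j-h_{j-1})(y')\,y_n\chi(y_n/\delta_j)$ with $\delta_j=2^{-j}\varepsilon_j$ does what you claim. Two cosmetic points: the requirement $\|h_1-h_0\|_{L^1}\le 2^{-1}$ cannot be imposed when $\|h\|_{L^1}>1/2$ (take instead, say, $\|h_j-h\|_{L^1}\le 2^{-j}$), and after flattening the boundary $\gamma_1$ picks up a smooth nonvanishing factor and tangential contributions from the chart, which is harmless here precisely because $\gamma_0 v=0$. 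More importantly, your own estimates show that all second derivatives of $v$ lie in $L^1$ with summable norms, so in fact $v\in W^{2,1}(U)$, not merely $BH(U)$; this stronger conclusion is exactly what part (ii) needs, and you should state it.

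Part (ii) as written has a genuine gap. Applying Bartle--Graves to ``the surjection of part (i)'' yields a continuous right inverse with values in $BH(U)$, whereas the statement requires values in $W^{2,1}(U)$. The fix is already contained in your construction: since $w$ and the lift $v$ belong to $W^{2,1}(U)$, the map $(\gamma_0,\gamma_1):W^{2,1}(U)\to\gamma_0(W^{2,1}(U))\times L^1(\partial U)$ is itself a bounded linear surjection (quotient norm on the first factor), and Bartle--Graves applied to this map gives (ii). Your alternative route, however, is not salvageable: there is no bounded \emph{linear} lift $h\mapsto v$ from $L^1(\partial U)$ into $W^{2,1}(U)$ with $\gamma_0 v=0$ and $\gamma_1 v=h$. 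If such a map existed, composing it with a normal derivative would produce a bounded linear right inverse of the trace $W^{1,1}(U)\to L^1(\partial U)$, contradicting Peetre's counterexample to a linear Gagliardo extension. Concretely, once the schedule $(\varepsilon_j)$ is fixed in advance, the quantity $\sum_j\|h_j-h_{j-1}\|_{L^1}$ that controls $\|\partial_n^2v\|_{L^1}$ is a Besov-type norm which is not bounded by $C\|h\|_{L^1}$, so the ``extend by density from $C^\infty_c(\partial U)$'' step fails. This impossibility is precisely the reason the theorem asserts only a continuous, not a linear, right inverse.
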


For more on the space $BH$, see e.g.~\cite{savare1998superposition,fonseca2003lower}.

\subsection{Relaxation of integral functionals that depend on higher
  derivatives}
A
function $f:\R^{m\times n^k}\to \R$ is called $k$-quasiconvex if

\begin{equation}
f(\xi)=\inf\left\{\int_{[-1/2,1/2]^n}f(\xi+\nabla^k \varphi)\d x:\varphi\in
  W^{k,\infty}_0([-1/2,1/2]^n;\R^m)\right\}\,,\label{eq:23}
\end{equation}
see \cite{MR0188838}.  

\medskip

The so-called  $k$-quasiconvexification of $f:\R^{m\times n^k}\to \R$ is given by
the right hand side above,
\[
Q_kf(\xi)=\inf\left\{ \int_{[-1/2,1/2]^n}
f(\xi+\nabla^k\varphi)\d x:\,\varphi\in W^{k,\infty}_0([-1/2,1/2]^n;\R^m)\right\}\,.
\]
In the case $k=1$, one obtains the relaxation of integral
functionals $u\mapsto\int f(\nabla u)\d x$ by replacing $f$ by its
quasiconvex envelope $Q_1f$. 


\subsection{Blow-up method}

The main tool in our proof will be the so-called blow-up method. In the context
of lower semicontinuity of integral functionals in $BV$, this has been developed
by Fonseca and M\"uller.

\begin{theorem}[Theorem 2.19 in \cite{MR1218685}]
\label{thm:MFsingularpart} Let $f:\R^m\times \R^{m\times n}\to \R$ be 
   quasiconvex and positively one-homogeneous\footnote{When comparing our
     statement of the theorem with the one in \cite{MR1218685}, note that the assumption that $f$ is positively one-homogeneous implies that the
  recession function for $f$ is identical to $f$.} in the second argument. Assume that
 $v_j\to v$ weakly * in $BV(U)$ and  $f(v_j,\nabla
  v_j)\L^n\to \mu$ weakly * in the sense of measures, and that 
  $\zeta_2,\zeta_3$ are defined as the Radon-Nikodym derivatives
\[
\zeta_2=\frac{\d\mu}{\d(|D^sv|\ecke
C_v)},\quad \zeta_3=\frac{\d \mu}{\d (\H^1\ecke J_v)}\,.
\]
Then
  \[
  \begin{split}
    \zeta_2(x_0)&\geq f\left(v(x_0),\frac{\d Dv}{\d |Dv|}(x_0)\right)
    \quad\text{ for
    }|D^sv|\ecke C_{v}\text{ a.e. }x_0\in \Omega\\
    \zeta_3(x_0)&\geq K_f(v^+(x_0),v^-(x_0),\nu_v(x_0)) \quad\text{ for }|D^sv|\ecke
    J_{v}\text{ a.e. }x_0\in \Omega\,,
  \end{split}
  \]
  where
  \[
  \begin{split}
    K_f(a,b,\nu)=\inf\Big\{&\int_{Q_\nu}f(w,\nabla w)\d x: w\in W^{1,1}(Q_\nu), \\
    & w(x)=a \text{ for } x\cdot \nu=+1/2, \, w(x)=b \text{ for } x\cdot
    \nu=-1/2\Big\}\,.
  \end{split}
  \]
\end{theorem}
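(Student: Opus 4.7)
The proof follows the blow-up method of Fonseca--M\"uller. Applying Theorem~\ref{thm:RN} to $\mu$ paired first with $|D^sv|\ecke C_v$ and then with $\H^{n-1}\ecke J_v$, the densities $\zeta_2(x_0)$ and $\zeta_3(x_0)$ equal the limits of the ratios $\mu(Q_\nu(x_0,\rho))/|D^sv|\ecke C_v(Q_\nu(x_0,\rho))$ and $\mu(Q_\nu(x_0,\rho))/\H^{n-1}(J_v\cap Q_\nu(x_0,\rho))$ for almost every $x_0$. I would fix such an $x_0$ at which $v$ is, in addition, approximately continuous with value $v^*(x_0)$ (Cantor case), or admits the two approximate limits $v^\pm(x_0)$ with jump normal $\nu=\nu_v(x_0)$ (jump case); in the Cantor case I would further demand $\d Dv/\d|Dv|(x_0)=\eta\otimes\nu$ with unit vectors $\eta,\nu$.

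\textbf{Cantor part.} Set $\alpha(\rho)=|Dv|(Q_\nu(x_0,\rho))/\rho^{n-1}$ and consider the blow-ups
\[
w_j^{(\rho)}(y)=\frac{1}{\alpha(\rho)}\left(v_j(x_0+\rho y)-\rho^{-n}\int_{Q_\nu(x_0,\rho)}v_j\,\d x\right),\quad y\in Q_\nu.
\]
Theorem~\ref{thm:BVblow} supplies a subsequence $\rho_k\downarrow 0$ with $w^{(\rho_k)}\to v_\infty$ in $L^1(Q_\nu)$, where $v_\infty(y)=\psi(y\cdot\nu)\eta$ for some non-decreasing $\psi$. A diagonal argument using $v_j\to v$ in $L^1$, $f(v_j,\nabla v_j)\L^n\to\mu$ weakly-$*$, and the fact that $|Dv|(\partial Q_\nu(x_0,\rho))=0$ for a.e.\ $\rho$, produces indices $j(k)\to\infty$ such that $w_k:=w^{(\rho_k)}_{j(k)}\to v_\infty$ in $L^1(Q_\nu)$; changing variables $x=x_0+\rho_k y$ and using positive one-homogeneity of $f$ in $\nabla u$ gives
\[
\int_{Q_\nu} f\bigl(v_{j(k)}(x_0+\rho_k y),\nabla w_k(y)\bigr)\,\d y=\frac{\int_{Q_\nu(x_0,\rho_k)} f(v_{j(k)},\nabla v_{j(k)})\,\d x}{|Dv|(Q_\nu(x_0,\rho_k))}\longrightarrow\zeta_2(x_0),
\]
where in the last step I also invoke that $|D^sv|\ecke C_v$ and $|Dv|$ have the same density at $x_0\in C_v$. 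Approximate continuity of $v$ at $x_0$ then lets me replace $v_{j(k)}(x_0+\rho_k\cdot)$ in the first slot of $f$ by the constant $v^*(x_0)$ at the cost of an $o(1)$ term. Writing $\tilde f(\cdot)=f(v^*(x_0),\cdot)$, which inherits quasiconvexity and positive one-homogeneity from $f$, I am reduced to $\liminf_k\int_{Q_\nu}\tilde f(\nabla w_k)\,\d y\ge\tilde f(\eta\otimes\nu)$. Modifying each $w_k$ in a thin layer near $\partial Q_\nu$ so that its trace matches the affine map $y\mapsto(y\cdot\nu)\eta$, and applying the quasiconvexity inequality \eqref{eq:23}, yields the stated lower bound.

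\textbf{Jump part.} Without value rescaling, the blow-ups $w^{(\rho)}(y)=v(x_0+\rho y)$ converge in $L^1(Q_\nu)$ to the step function $v_\infty$ equal to $v^\pm(x_0)$ on $\{\pm\, y\cdot\nu>0\}$. Diagonal extraction from $v_j$ yields a sequence $w_k\to v_\infty$ in $L^1(Q_\nu)$ with $\int_{Q_\nu}f(w_k,\nabla w_k)\,\d y\to\zeta_3(x_0)$. Modifying $w_k$ in boundary layers near the two faces $\{y\cdot\nu=\pm 1/2\}$ so that it attains the constant values $v^\pm(x_0)$ there makes it admissible in the infimum defining $K_f(v^+(x_0),v^-(x_0),\nu)$, giving $\zeta_3(x_0)\ge K_f(v^+(x_0),v^-(x_0),\nu)$.

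\textbf{Main obstacle.} The delicate point in both parts is the boundary adjustment: since $f$ has only linear growth in $\nabla u$, classical $W^{1,p}$ truncation tools are unavailable. I would handle this in the spirit of \cite{MR1218685} by averaging over a family of concentric sub-cubes to locate a radius where $w_k$ is $L^\infty$-close to its limit on the corresponding slice, and interpolating linearly from there to the boundary of $Q_\nu$; positive one-homogeneity of $f$ then ensures that the modification contributes only $o(1)$ to the energy, so the lower bound survives the boundary matching.
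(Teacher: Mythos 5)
This statement is not proved in the paper at all: it is imported verbatim as Theorem 2.19 of \cite{MR1218685}, so the only meaningful comparison is with the Fonseca--M\"uller argument that your sketch is implicitly reconstructing. Your overall architecture (Radon--Nikodym differentiation, the blow-up Theorem \ref{thm:BVblow}, diagonalization in $(j,\rho)$, the one-homogeneous rescaling of the energy, boundary matching) is indeed their strategy, and the jump-part half of your sketch is essentially sound: near the faces $\{y\cdot\nu=\pm 1/2\}$ the blow-up limit is constant, so the layer-selection trick you describe, together with the linear growth coming from positive one-homogeneity, makes the boundary modification cost $o(1)$ and produces an admissible competitor for $K_f$.

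The Cantor-part endgame, however, has a genuine gap. After freezing the first argument you claim that you can modify $w_k$ in a thin layer so that its trace matches the affine map $y\mapsto (y\cdot\nu)\eta$ and then apply the quasiconvexity inequality \eqref{eq:23}. But Theorem \ref{thm:BVblow} only gives $w_k\to v_\infty$ in $L^1$ with $v_\infty(y)=\psi(y\cdot\nu)\eta$ for a non-decreasing $\psi$ that is in general far from affine (and may even have jumps); consequently $|w_k-(y\cdot\nu)\eta|$ is of order one in the transition layer, not $o(1)$, and the cut-off adds an energy of order one. The averaging-over-layers device in your last paragraph repairs boundary mismatches that vanish in $L^1$; it cannot force affine boundary data here. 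This is exactly the delicate point of the Cantor estimate: one must use the monotonicity of $\psi$ and the fact that $Dv_\infty$ is a nonnegative measure times the fixed rank-one matrix $\eta\otimes\nu$ --- for instance by decomposing $Q_\nu$ into slabs on which $\psi$ is nearly affine, applying quasiconvexity slab by slab and summing the increments (monotonicity prevents cancellation), or by invoking the lower-semicontinuity result of \cite{ambrosio1992relaxation} for the frozen integrand $\tilde f=f(v(x_0),\cdot)$. As written, the step ``apply \eqref{eq:23}'' does not go through, so your argument does not establish $\zeta_2(x_0)\geq f\left(v(x_0),\eta\otimes\nu\right)$. (A smaller point: replacing $v_{j(k)}(x_0+\rho_k\cdot)$ by $v(x_0)$ in the first slot costs $o(1)$ only under a continuity-modulus hypothesis of the type $|f(u,\xi)-f(u',\xi)|\leq\omega(|u-u'|)|\xi|$ together with a splitting into good and bad sets; this is part of the hypotheses of \cite{MR1218685} and should be made explicit.)
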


\section{Some auxiliary lemmas}
\label{sec:some-auxil-lemm}
\subsection{Relaxation and quasiconvexification}

 We consider the following integrands, defined for $v\in \R^2$, $\xi\in\Rsym$: 
\[
f_\lambda(v,\xi)=\lambda^{-1/2} F_\lambda(S(v,\xi))\sqrt{1+|v|^2}\,.
\]
This choice implies $\F_\lambda(u)=\int_\Omega f_\lambda(\nabla u,\nabla^2 u)\d x$.

\medskip

In order to find the lower semicontinuous envelope of $\F_\lambda$,  we will need to
determine  the 2-quasiconvexification of $f_\lambda$. In
principle this is contained in \cite{MR820342,allaire1993optimal}, and the
appendix of \cite{olbermann2017michell} contains a detailed proof of the case
$v=0$. Hence we only point out the modifications that are
necessary with respect to the latter; these changes can be found in the appendix to the
present paper.

\begin{proposition}
\label{prop:Q2F}
Let $v\in \R^{2}$.  The 2-quasiconvexification of $f_\lambda(v,\cdot)=\lambda^{-1/2}F_\lambda( S(v,\cdot))\sqrt{1+|v|^2}$ is given by

\begin{equation}
  Q_2f_\lambda(v,\xi)=\sqrt{1+|v|^2}\begin{cases}2 \rho^0(S(v,\xi))-2\frac{|\det S(v,\xi)|}{\sqrt{\lambda}} & \text{ if }\rho^0(S(v,\xi))\leq
  \sqrt{\lambda}\\
 \frac{|S(v,\xi)|^2}{\sqrt{\lambda}}+\sqrt{\lambda} & \text{ else.}\end{cases}\label{eq:1}
\end{equation}

\end{proposition}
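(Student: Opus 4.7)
The plan is to reduce the computation to the case $v=0$, which is treated in the appendix of \cite{olbermann2017michell}. Set $c:=\sqrt{1+|v|^2}$ and $\Lambda:=\gm(v)^{-1/2}$, so that $\Lambda$ is symmetric positive definite and $S(v,\xi)=c^{-1}\Lambda\xi\Lambda$. In particular $\xi\mapsto S(v,\xi)$ is a linear isomorphism of $\Rsym$, and a direct inspection of the definitions gives the factorization $f_\lambda(v,\xi)=c\,f_\lambda(0,S(v,\xi))$, where $f_\lambda(0,\eta)=\lambda^{-1/2}F_\lambda(\eta)$ is the $v=0$ integrand (the identity being trivially true when $\xi=0$, and clear from $|S(v,\xi)|^2+\lambda$ when $\xi\neq 0$).

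First I would transport test functions through this linear isomorphism. Given $\varphi\in W^{2,\infty}_0(Q)$, set $\tilde Q:=\Lambda^{-1}Q$ and $\psi(\tilde x):=c^{-1}\varphi(\Lambda\tilde x)$; then $\psi\in W^{2,\infty}_0(\tilde Q)$ and $\nabla^2\psi(\tilde x)=c^{-1}\Lambda(\nabla^2\varphi)(\Lambda\tilde x)\Lambda$. The change of variables $x=\Lambda\tilde x$ yields
\[
\int_Q f_\lambda(v,\xi+\nabla^2\varphi(x))\,\d x \;=\; c\,\det(\Lambda)\int_{\tilde Q} f_\lambda\bigl(0,\,S(v,\xi)+\nabla^2\psi(\tilde x)\bigr)\,\d\tilde x.
\]
Since $\varphi\leftrightarrow\psi$ is a bijection between $W^{2,\infty}_0(Q)$ and $W^{2,\infty}_0(\tilde Q)$, and the 2-quasiconvexification of a continuous integrand is independent of the bounded open test domain (a standard fact), passing to the infimum gives
\[
Q_2 f_\lambda(v,\xi) \;=\; c\,\det(\Lambda)\,|\tilde Q|\,Q_2 f_\lambda(0,S(v,\xi)) \;=\; \sqrt{1+|v|^2}\,Q_2 f_\lambda(0,S(v,\xi)),
\]
where the last equality uses $|\tilde Q|=\det(\Lambda^{-1})$.

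To conclude, I would substitute the explicit formula for $Q_2 f_\lambda(0,\cdot)$ derived in \cite{olbermann2017michell}, which is exactly the case split appearing in \eqref{eq:1} with $\eta$ in place of $S(v,\xi)$; evaluation at $\eta=S(v,\xi)$ and multiplication by $\sqrt{1+|v|^2}$ produces \eqref{eq:1} verbatim. The main obstacle is not in this reduction, which is a purely linear-algebraic change of coordinates, but rather in the underlying $v=0$ computation itself: one must verify that the optimal second-gradient laminations and the matching upper-bound construction from \cite{olbermann2017michell} remain valid here, which is precisely the set of modifications the author defers to the appendix of the present paper. Once those are in place, the formula for general $v$ follows mechanically from the reduction above.
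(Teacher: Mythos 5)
Your proposal is correct, but it takes a genuinely different route from the paper. The paper's appendix re-runs the whole two-sided argument with $S(v,\cdot)$ inserted: the lower bound is obtained by exhibiting polyconvexity of $g_\lambda(S(v,\cdot))$ through modified convex functions $H_\pm(\xi,A)$ carrying the factor $(1+|v|^2)^{-2}$, and the upper bound by an explicit second-order laminate estimate for the symmetric rank-one convex envelope $\Rs f_\lambda(v,\cdot)$ (Lemma \ref{lem:RsGl}), each step using pointwise that $\xi\mapsto S(v,\xi)$ is linear and invertible. You exploit the same linear-algebraic fact once and globally: since $\Lambda=\gm(v)^{-1/2}$ is symmetric positive definite, the map $\xi\mapsto c^{-1}\Lambda\xi\Lambda$ is exactly how Hessians transform under the substitution $x=\Lambda\tilde x$ together with multiplication of $\varphi$ by $c^{-1}$, so the $2$-quasiconvexification commutes with it, and the determinant factors cancel against the change of domain volume; the general-$v$ formula then follows from the $v=0$ formula of \cite{olbermann2017michell} used as a black box. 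This is cleaner and would make the present paper's appendix essentially unnecessary, which also means your closing caveat is misplaced: under your reduction there is nothing to ``re-verify'' for $v\neq 0$, since the laminate and polyconvexity constructions are only ever invoked at $v=0$, exactly as proved in the cited reference. Two small points deserve care. First, the domain-independence of $Q_2$ that you invoke should not be stated for ``continuous'' integrands: $f_\lambda(v,\cdot)$ jumps at $\xi=0$. The standard Vitali-covering/rescaling proof, however, only needs a Borel, locally bounded integrand (the uncovered null set is filled with $\nabla^2\varphi=0$ and contributes nothing), so the fact you need does hold; it would be worth spelling this out rather than citing it as standard for continuous integrands. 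Second, one should note explicitly that $\varphi\in W^{2,\infty}_0(Q)\mapsto\psi=c^{-1}\varphi(\Lambda\,\cdot)\in W^{2,\infty}_0(\Lambda^{-1}Q)$ is a bijection and that $\Lambda\eta\Lambda$ preserves $\Rsym$; both are immediate, and with them the computation $Q_2f_\lambda(v,\xi)=\sqrt{1+|v|^2}\,Q_2f_\lambda(0,S(v,\xi))$ is complete and yields \eqref{eq:1}.
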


In the sequel we use the notation 

  \begin{equation}
\begin{split}
  g_\lambda(\xi)&=\begin{cases} 2\left(\rho^0(\xi)-\frac{|\det \xi|}{\sqrt{\lambda}}\right)&\text{ if
    }\rho^0(\xi)\leq \sqrt{\lambda}\\
    \frac{|\xi|^2}{\sqrt{\lambda}}+\sqrt{\lambda}& \text{ else.}\end{cases}\\
  h_\lambda(v,\xi)&=Q_2 f_\lambda(v,\xi)
=g_\lambda(S(v,\xi))\sqrt{1+|v|^2}\,.
\end{split}\label{eq:55}
\end{equation}

\subsection{Properties of $h_\lambda$}

The following straightforward estimate will be used repeatedly:
\begin{lemma}
\label{lem:basicSestimate}
  Let $v\in \R^2,\xi\in \R^{2\times 2}$. Then
\[
|S(v,\xi)|^2\leq (1+|v|^2)^{-1}|\xi|^2\,.
\]
\end{lemma}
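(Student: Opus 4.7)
The plan is to reduce the estimate to a statement about the operator norm of $\gm(v)^{-1/2}$, which is elementary once the spectrum of $\gm(v)$ is identified. First I would unfold the definitions of $S$ and $\secf$ to write
\[
S(v,\xi)=\frac{1}{\sqrt{1+|v|^2}}\,\gm(v)^{-1/2}\,\xi\,\gm(v)^{-1/2},
\]
so that
\[
|S(v,\xi)|^2=\frac{1}{1+|v|^2}\bigl|\gm(v)^{-1/2}\xi\,\gm(v)^{-1/2}\bigr|^2.
\]
Thus the claim reduces to showing $|\gm(v)^{-1/2}\xi\,\gm(v)^{-1/2}|\leq |\xi|$, where $|\cdot|$ denotes the Frobenius norm.

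Next I would diagonalize $\gm(v)=\id_{2\times 2}+v\otimes v$. Since $v\otimes v$ has eigenvalue $|v|^2$ with eigenvector $v/|v|$ (if $v\neq 0$) and eigenvalue $0$ on $v^{\perp}$, the symmetric matrix $\gm(v)$ has eigenvalues $1+|v|^2$ and $1$, both $\geq 1$. Consequently $\gm(v)^{-1/2}$ is symmetric with eigenvalues $(1+|v|^2)^{-1/2}$ and $1$, in particular its operator norm satisfies $|\gm(v)^{-1/2}|_{\infty}\leq 1$.

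Finally I would invoke the standard compatibility between operator and Frobenius norms, $|AB|\leq |A|_{\infty}|B|$ and $|BA|\leq |A|_{\infty}|B|$, applied twice:
\[
\bigl|\gm(v)^{-1/2}\xi\,\gm(v)^{-1/2}\bigr|
\leq |\gm(v)^{-1/2}|_{\infty}\,\bigl|\xi\,\gm(v)^{-1/2}\bigr|
\leq |\gm(v)^{-1/2}|_{\infty}^{2}\,|\xi|\leq |\xi|.
\]
Combining this with the prefactor $(1+|v|^2)^{-1}$ gives the desired inequality.

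There is no substantive obstacle; the only thing to be careful about is to distinguish the operator and Frobenius norms and to apply the submultiplicative estimate on the correct side. The proof is just a two-line unpacking once the spectrum of $\gm(v)$ is noted.
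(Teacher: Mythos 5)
Your proof is correct and follows essentially the same route as the paper, which likewise observes that $\gm(v)^{-1}$ is symmetric with eigenvalues $1$ and $(1+|v|^2)^{-1}$ and concludes by the compatibility of the operator norm with the Frobenius norm. Your write-up merely makes the two-sided submultiplicative estimate explicit, which is fine.
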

\begin{proof}
  This follows easily from the observation that $g(v)^{-1}$ is a symmetric matrix
  with eigenvalues $1$ and $(1+|v|^2)^{-1}$.
\end{proof}

\medskip

In the following lemma, we collect some properties of $g_\lambda$.

\begin{lemma}
\label{lem:hladd}
\begin{itemize}
\item[(i)] Let $M>1$. There exists a constant $C=C(M)$ such that whenever
  $A,B\in \Rsym$ with $|A|\leq M|B|$, we have
  \[
  g_\lambda(A)\leq C\, g_\lambda(B)\,.
  \]
\item[(ii)] For $A,B\in \Rsym$, we have
\[
|g_\lambda(A)-g_\lambda(B)|\leq C|A-B|\left(1+\frac{|A|+|B|}{\sqrt{\lambda}}\right)\,.
\]
\item[(iii)]  For every $\lambda>0$, we have
  \[
  g_\lambda(\xi)\geq 2 |\xi|_{\infty}\,.
  \]
\end{itemize}
\end{lemma}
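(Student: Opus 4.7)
The plan is to verify each of the three items by a direct case analysis on whether the relevant $\rho^0$-values lie below or above the threshold $\sqrt\lambda$, using only elementary facts: the sublinearity and Lipschitz estimate $|\rho^0(A)-\rho^0(B)|\leq 2|A-B|$, the polynomial bound $\bigl||\det A|-|\det B|\bigr|\leq C|A-B|(|A|+|B|)$ for $2\times 2$ matrices, the norm comparison $|\xi|_\infty\leq |\xi|\leq \sqrt 2\,|\xi|_\infty$ on $\Rsym$, and the elementary $\rho^0$-bounds \eqref{eq:30}.

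For (iii), I would split on $\rho^0(\xi)$. If $\rho^0(\xi)\leq \sqrt\lambda$, let $\tau_1,\tau_2$ denote the eigenvalues with $|\tau_1|\geq |\tau_2|$; then $|\tau_1|\leq \sqrt\lambda$, so
\[
g_\lambda(\xi)=2(|\tau_1|+|\tau_2|)-2\tfrac{|\tau_1\tau_2|}{\sqrt\lambda}=2|\tau_1|+2|\tau_2|\Bigl(1-\tfrac{|\tau_1|}{\sqrt\lambda}\Bigr)\geq 2|\tau_1|=2|\xi|_\infty.
\]
If $\rho^0(\xi)>\sqrt\lambda$, AM--GM gives $|\xi|^2/\sqrt\lambda+\sqrt\lambda\geq 2|\xi|\geq 2|\xi|_\infty$.

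For (i), I would combine (iii) with a matching upper bound. From (iii), $g_\lambda(B)\geq 2|B|_\infty\geq \sqrt 2\,|B|$. For the upper bound $g_\lambda(A)\leq 4|A|$, I again split cases: if $\rho^0(A)\leq \sqrt\lambda$, then $g_\lambda(A)\leq 2\rho^0(A)\leq 4|A|$; if $\rho^0(A)>\sqrt\lambda$, then $\sqrt\lambda<\rho^0(A)\leq 2|A|$, whence $|A|^2/\sqrt\lambda\leq 2|A|$ and $\sqrt\lambda\leq 2|A|$, so $g_\lambda(A)\leq 4|A|$. Combining, $g_\lambda(A)\leq 4|A|\leq 4M|B|\leq 2\sqrt 2\,M\, g_\lambda(B)$, yielding $C(M)=2\sqrt 2\,M$.

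For (ii), I would split on the positions of $A,B$ relative to the threshold. In the subcase $\rho^0(A),\rho^0(B)\leq \sqrt\lambda$, the difference is $2(\rho^0(A)-\rho^0(B))-2(|\det A|-|\det B|)/\sqrt\lambda$, and the two Lipschitz/polynomial estimates above immediately give the claim. In the subcase $\rho^0(A),\rho^0(B)>\sqrt\lambda$, the difference equals $(|A|^2-|B|^2)/\sqrt\lambda\leq |A-B|(|A|+|B|)/\sqrt\lambda$. For the mixed subcase, say $\rho^0(A)\leq \sqrt\lambda<\rho^0(B)$, I would introduce $C=(1-t)A+tB$ chosen by the intermediate value theorem (continuity of $\rho^0$) so that $\rho^0(C)=\sqrt\lambda$. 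A short algebraic check shows that both branches of $g_\lambda$ agree precisely when $\rho^0(\cdot)=\sqrt\lambda$ (the identity $(|\tau_1|+|\tau_2|)^2=\lambda$ reduces to $\tau_1^2+\tau_2^2+2|\tau_1\tau_2|=\lambda$), so $g_\lambda$ is continuous across the threshold and the two preceding subcases apply to $(A,C)$ and $(C,B)$. Using $|A-C|+|C-B|=|A-B|$ (collinearity) and $|C|\leq |A|+|B|$, I can add the two resulting estimates to obtain the full bound.

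The main obstacle is the mixed case in (ii): the continuity of $g_\lambda$ at the threshold must be verified, and one must be careful that the intermediate-point decomposition does not inflate the factor $(|A|+|B|)/\sqrt\lambda$. Both issues are handled by the algebraic identity above and the trivial bound $|C|\leq |A|+|B|$.
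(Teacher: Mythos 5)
Your parts (ii) and (iii) are sound: (iii) is essentially the paper's own argument, and for (ii) your route (case analysis on the threshold plus an intermediate point obtained from the IVT, together with the check that the two branches of $g_\lambda$ agree when $\rho^0(\cdot)=\sqrt\lambda$) is a legitimate alternative to the paper's shorter argument via piecewise differentiability and the gradient bound $|\nabla g_\lambda(A)|\leq C(1+|A|/\sqrt\lambda)$; in fact your explicit continuity check at the threshold addresses a point the paper leaves implicit. The genuine gap is in (i). There you claim the uniform bound $g_\lambda(A)\leq 4|A|$, and in the case $\rho^0(A)>\sqrt\lambda$ you argue ``$\sqrt\lambda<\rho^0(A)\leq 2|A|$, whence $|A|^2/\sqrt\lambda\leq 2|A|$''. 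This inequality goes the wrong way: $\sqrt\lambda\leq 2|A|$ only yields the \emph{lower} bound $|A|^2/\sqrt\lambda\geq |A|/2$; the upper bound $|A|^2/\sqrt\lambda\leq 2|A|$ would require $\sqrt\lambda\geq |A|/2$, which is not implied. Indeed the claim is false: for $\lambda=1$ and $A=\diag(t,0)$ one has $g_1(A)=1+t^2$, which exceeds $4|A|=4t$ for large $t$; above the threshold $g_\lambda$ grows quadratically in $|A|$, so no bound $g_\lambda(A)\leq C|A|$ with $C$ independent of $\lambda$ and $A$ can hold, and your chain $g_\lambda(A)\leq 4|A|\leq 4M|B|\leq 2\sqrt2\,M\,g_\lambda(B)$ collapses.

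The repair requires letting the hypothesis $|A|\leq M|B|$ enter the superquadratic regime, which is exactly what the paper's three-case proof does. If $\rho^0(A)\leq\sqrt\lambda$, your linear argument works ($g_\lambda(A)\leq 2\rho^0(A)\leq 4|A|\leq 4M|B|\leq 4M g_\lambda(B)$, using $|B|\leq g_\lambda(B)$ from (iii)). If $\rho^0(B)\leq\sqrt\lambda\leq\rho^0(A)$, the key point is that $\sqrt\lambda\geq\rho^0(B)\geq|B|\geq|A|/M$, so $|A|^2/\sqrt\lambda\leq M|A|\leq M^2|B|$, while $\sqrt\lambda\leq\rho^0(A)\leq 2|A|\leq 2M|B|$, giving $g_\lambda(A)\leq CM^2|B|\leq CM^2 g_\lambda(B)$. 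If both $\rho^0(A)$ and $\rho^0(B)$ exceed $\sqrt\lambda$, compare the quadratic branches directly: $\sqrt\lambda+|A|^2/\sqrt\lambda\leq M^2\bigl(\sqrt\lambda+|B|^2/\sqrt\lambda\bigr)=M^2 g_\lambda(B)$. With (i) redone along these lines, the rest of your proposal stands.
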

\begin{proof}
We prove (i) by case distinction:
If $\sqrt{\lambda}\geq \rho^0(A)$, then we have
\[
  g_\lambda (A)\leq
   2\rho^0(A)\leq 4|A|\leq 4M|B|\leq 4M g_\lambda(B)\,.
\]
If $\rho^0(B)\leq \sqrt{\lambda}\leq \rho^0(A)$, then we have
\[
g_\lambda(A)= \sqrt{\lambda} +\frac{|A|^2}{\sqrt{\lambda}}\leq
2|A|+\frac{|A|^2}{M^{-1}|A|} \leq 3M^{2} |B|\leq 3M^2 g_\lambda(B)\,.
\]
If $\sqrt{\lambda}\leq \min(\rho^0(A),\rho^0(B))$, then 
\[
g_\lambda(A)=\sqrt{\lambda} +\frac{|A|^2}{\sqrt{\lambda}}
\leq M^2 g_\lambda(B).
\]
This completes the proof of (i).

\medskip

To prove (ii) it suffices to observe that $g_\lambda$ is piecewise
differentiable. A direct computation yields 
\[
|\nabla g_\lambda(A)|\leq C\left(1+\frac{|A|}{\sqrt{\lambda}}\right)
\]
almost everywhere, which immediately implies (ii).

\medskip

Finally we prove (iii). For $\xi=0$, the inequality is trivial. So let $\xi\neq
0$, and denote the eigenvalues of $\xi$ by $\tau_1,\tau_2$.  For $\rho^0(\xi)\leq\sqrt{\lambda}$, we have
\[
\begin{split}
  g_\lambda(\xi)&=2\left(\rho^0(\xi)-\frac{|\det \xi|}{\sqrt{\lambda}}\right)\\
  &\geq 2\left(\rho^0(\xi)-\frac{|\det \xi|}{|\xi|_\infty}\right)\\
  &\geq 2 \left(\rho^0(\xi)-\min(|\tau_1|,|\tau_2|)\right)\\
  &= 2 |\xi|_\infty\,.
\end{split}
\]
For $\rho^0(\xi)\geq\sqrt{\lambda}$, we have by the Cauchy-Schwarz inequality,
\[
  g_\lambda(\xi)=\sqrt{\lambda}+\frac{|\xi|^2}{\sqrt{\lambda}}
\geq 2\frac{|\xi|\sqrt{\lambda}}{\sqrt{\lambda}}
\geq 2|\xi|_\infty\,.
\]
This proves the lemma.
\end{proof}



In the following lemma, we introduce the following notation: The pointwise limit of $h_\lambda$ for $\lambda\to\infty$ is
\[
  G(v,\xi)=2 \rho^0(S(v,\xi))\sqrt{1+|v|^2}\,.
  \]

\begin{lemma}
\label{lem:hbounds}
We have that 
  \[
  \begin{split}
    \left|h_\lambda(v,\xi)-h_\lambda(\tilde v,\xi)\right|&\leq C|v-\tilde v|
    \max
    \left(h_\lambda(v,\xi),h_\lambda(\tilde v,\xi)\right) \\
    \left|f_\lambda(v,\xi)-f_\lambda(\tilde v,\xi)\right|&\leq C|v-\tilde v|
    \max \left(f_\lambda(v,\xi),f_\lambda(\tilde v,\xi)\right)\\
    \left|G(v,\xi)-G(\tilde v,\xi)\right|&\leq C|v-\tilde v|
    \max \left(G(v,\xi),G(\tilde v,\xi)\right) 
  \end{split}
\]
for all $v,\tilde v\in \R^2$, $\xi\in \Rsym$, where the constants $C$ do not
depend on $\lambda$.
\end{lemma}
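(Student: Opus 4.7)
All three inequalities share the common form $\Phi(v,\xi) = \psi(S(v,\xi))\sqrt{1+|v|^2}$, where $\psi$ is one of $g_\lambda$, $\lambda^{-1/2}F_\lambda$, or $2\rho^0$ (for $h_\lambda$, $f_\lambda$, $G$, respectively), so I would treat the three cases in parallel. The plan is to establish, for each of these $\Phi$, a uniform infinitesimal bound
\[
|\nabla_v \Phi(v,\xi)| \leq C\,\Phi(v,\xi) \qquad \text{for a.e.\ } v \in \R^2,
\]
with $C$ independent of $\lambda$ and $\xi$. Given such a bound, set $\gamma(t) = (1-t)\tilde v + tv$ for $t \in [0,1]$ and $\phi(t) = \Phi(\gamma(t),\xi)$; then $|\dot\phi(t)| \leq C|v-\tilde v|\phi(t)$, so a Grönwall argument yields $\phi(t) \leq e^{C|v-\tilde v|}\min(\Phi(v,\xi),\Phi(\tilde v,\xi))$ for all $t\in[0,1]$, and hence
\[
|\Phi(v,\xi) - \Phi(\tilde v,\xi)| \leq C|v-\tilde v|\,e^{C|v-\tilde v|}\min(\Phi(v,\xi), \Phi(\tilde v,\xi)).
\]
This proves the claim with an adjusted constant whenever $|v-\tilde v| \leq 1$; for $|v-\tilde v| > 1$, the inequality $|\Phi(v) - \Phi(\tilde v)| \leq 2\max(\Phi(v),\Phi(\tilde v)) \leq 2|v-\tilde v|\max(\Phi(v),\Phi(\tilde v))$ is trivial. (For $\psi = g_\lambda$ one works with absolutely continuous $\phi$, since $g_\lambda$ is Lipschitz but only piecewise smooth.)

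The core ingredient I need is the geometric estimate
\[
|\partial_{v_k} S(v,\xi)| \leq C\, |S(v,\xi)|, \qquad k = 1, 2,
\]
with $C$ independent of $v$ and $\xi$. Starting from $S(v,\xi) = (1+|v|^2)^{-1/2}\, g(v)^{-1/2}\,\xi\, g(v)^{-1/2}$, the chain rule gives $\partial_{v_k}S = A_k S + S A_k^T + c_k S$, where $A_k = \partial_{v_k}(g(v)^{-1/2})\,g(v)^{1/2} = -g(v)^{-1/2}\,\partial_{v_k}g(v)^{1/2}$ (from $\partial_{v_k}(g^{-1/2}g^{1/2}) = 0$) and $c_k = -v_k/(1+|v|^2)$. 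To bound $|A_k|$ uniformly, I use the spectral decomposition $g(v)^{1/2} = P_{v^\perp} + \sqrt{1+|v|^2}\,P_v$ (and similarly $g(v)^{-1/2} = P_{v^\perp} + (1+|v|^2)^{-1/2}P_v$), and verify by direct calculation that the growth $\sqrt{1+|v|^2}$ of the large eigenvalue of $g^{1/2}$ is exactly cancelled by the decay of the corresponding eigenvalue of $g^{-1/2}$; moreover, the singular factor $1/|v|$ appearing in $\partial_{v_k}P_v$ is multiplied by $(\sqrt{1+|v|^2}-1)$, which vanishes at order $|v|$ near the origin. The term $|c_k|\leq 1/2$ is trivial.

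Given the geometric estimate on $\partial_v S$, the infinitesimal bound on $\Phi$ follows by combining it with Lipschitz-type properties of $\psi$ and a lower bound $\psi(A) \gtrsim |A|$. For $\psi = 2\rho^0$, I would use the $2$-Lipschitz bound implicit in \eqref{eq:30} together with $\rho^0(A) \geq |A|$. For $\psi = g_\lambda$, I would combine Lemma \ref{lem:hladd}(ii) with the elementary estimate $|S| + |S|^2/\sqrt{\lambda} \leq 2\,g_\lambda(S)$, obtained by splitting into the two cases of the definition of $g_\lambda$ and applying Lemma \ref{lem:hladd}(iii). For $\psi = \lambda^{-1/2}F_\lambda$, differentiation on $\{\xi\neq 0\}$ is direct (and $\xi=0$ is trivial, as $S(v,0)=0$). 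The remaining outer factor $\sqrt{1+|v|^2}$ in $\Phi$ contributes at most $|v|/\sqrt{1+|v|^2}\cdot \psi(S) \leq \psi(S)$ to $|\nabla_v\Phi|$, which is absorbed into the bound. The main obstacle is the uniform geometric estimate $|\partial_v S| \leq C|S|$: it is the statement that the linear map $\xi \mapsto S(v,\xi)$ is ``logarithmically Lipschitz'' in $v$, uniformly in $\xi$, and this requires careful use of the explicit spectral structure of $g(v)^{\pm 1/2}$; once this is established, the remainder of the argument is bookkeeping across the three cases.
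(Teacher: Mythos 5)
Your proposal is correct and takes essentially the same route as the paper: bound $|\nabla_v S(v,\xi)|\le C\,|S(v,\xi)|$ using the spectral form of $(\id+v\otimes v)^{\pm 1/2}$, then apply the chain rule together with Lemma \ref{lem:hladd}(ii) and the absorption $|S|+|S|^2/\sqrt{\lambda}\lesssim g_\lambda(S)$ from Lemma \ref{lem:hladd}(iii), treating $f_\lambda$ on $\{\xi\neq 0\}$ and $G$ in the same way. The only differences are that you spell out the Gr\"onwall/mean-value passage from the infinitesimal bound to the stated finite-difference inequality (which the paper leaves implicit), and that you claim only $|\partial_v S|\le C|S|$ rather than the sharper form \eqref{eq:21} with the extra factor $(1+|v|^2)^{-1/2}$; this weaker bound is all the conclusion needs, since the majorant $h_\lambda$ already carries the factor $\sqrt{1+|v|^2}$.
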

\begin{proof}
We recall that $S(v,\xi)$ is given explicitly by 
\[
S(v,\xi)=(1+|v|^2)^{-1/2} \left(\id+v\otimes v\right)^{-1/2}\xi \left(\id+v\otimes
  v\right)^{-1/2}\,.
\]
We claim that
\begin{equation}
\label{eq:21}
\left|\nabla_v S(v,\xi)\right|\lesssim \frac{S(v,\xi)}{\sqrt{1+|v|^2}}\,.
\end{equation}
Indeed, 
noting that
\[
\left(\id+v\otimes v\right)^{-1/2}= \frac{1}{\sqrt{1+|v|^2}}\frac{v\otimes
  v}{|v|^2}+\frac{v^\bot\otimes v^\bot}{|v|^2}\,,
\]
this follows from a direct calculation, which we omit here.
Now we may estimate the partial derivative of $h_\lambda(v,\xi)$ using the chain
rule and Lemma \ref{lem:hladd} (ii),
\[
\begin{split}
  \left|\nabla_v h_\lambda(v,\xi)\right|&= \left|g_\lambda(S(v,\xi))\nabla_v
    \sqrt{1+|v|^2}
    + \sqrt{1+|v|^2} \nabla g_\lambda(S(v,\xi))\nabla_v S(v,\xi)\right|\\
  &\lesssim
  g_\lambda(S(v,\xi))+\sqrt{1+|v|^2}\left(1+\frac{S(v,\xi)}{\sqrt{\lambda}}\right)\frac{S(v,\xi)}{\sqrt{1+|v|^2}}\\
  &\lesssim g_\lambda(S(v,\xi))\\
  &\leq h_\lambda(S(v,\xi))\,.
\end{split}
\]
The analogous claim for $f_\lambda$ is trivial for $\xi=0$, and follows from
\eqref{eq:21} and the chain rule for $\xi\neq 0$. The inequality for $G$ is obtained from the one for $h_\lambda$ by taking the limit $\lambda\to \infty$.
\end{proof}

The following lemma will provide the proof of the lower bound once the
additional complication of the lower
order terms has been treated. 

\begin{lemma}
\label{lem:qcL1conv}  
Let $\Omega\subset \R^2$ be open and bounded, $v_0\in\R^2$, $\xi_0\in \R^{2\times 2}$,
  $w_\lambda\to 0$ in $L^1(\Omega)$ as $\lambda\to \infty$, and $\|\nabla w_\lambda\|_{L^1}<C$. Then
\[
\liminf_{\lambda\to\infty}\int_\Omega h_\lambda(v_0,\xi_0+\nabla w_\lambda)\d x\geq
2\L^2(\Omega)\rho^0(S(v_0,\xi_0))\sqrt{1+|v_0|^2}\,.
\]
\end{lemma}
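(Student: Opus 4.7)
My plan is to apply the $2$-quasiconvexity of $h_\lambda(v_0,\cdot)=Q_2 f_\lambda(v_0,\cdot)$ (Proposition \ref{prop:Q2F}) to a suitably cutoff and mollified version of $w_\lambda$, and then pass to the limit $\lambda\to\infty$ using the pointwise convergence $h_\lambda(v_0,\xi_0)\to G(v_0,\xi_0)=2\rho^0(S(v_0,\xi_0))\sqrt{1+|v_0|^2}$. I may assume the liminf is finite and pass to a realizing subsequence. In the intended application (via blow-up of a gradient field) $\nabla w_\lambda$ is symmetric, so I write $w_\lambda=\nabla\phi_\lambda$ with $\phi_\lambda\in W^{2,1}(\Omega)$ and normalize $\int_\Omega\phi_\lambda\d x=0$, $\int_\Omega\nabla\phi_\lambda\d x=0$; Poincar\'e then forces $\phi_\lambda\to 0$ in $W^{1,1}(\Omega)$, with $\phi_\lambda$ bounded in $W^{2,1}(\Omega)$.

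For the boundary cutoff, I fix $\delta>0$ small and $K\in\N$ large, partition the $\delta$-collar of $\partial\Omega$ into $K$ annular strips $A_1,\dots,A_K$ of width $\delta/K$, and use a Fubini/pigeonhole argument to select an index $k=k(\lambda)$ with
\[
  \int_{A_k}h_\lambda(v_0,\xi_0+\nabla^2\phi_\lambda)\d x+\tfrac{K}{\delta}\|\nabla\phi_\lambda\|_{L^1(A_k)}+\tfrac{K^2}{\delta^2}\|\phi_\lambda\|_{L^1(A_k)}\leq \tfrac{C}{K}+o_\lambda(1).
\]
I then pick a smooth cutoff $\chi$ equal to $1$ on the interior side of $A_k$ and $0$ on the exterior side, with $|\nabla^j\chi|\lesssim (K/\delta)^j$, and mollify $\chi\phi_\lambda$ at scale $\e\ll\delta/K$ to obtain $\tilde\phi_\lambda\in C^\infty_c(\Omega)\subset W^{2,\infty}_0(\Omega)$. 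The $2$-quasiconvexity of $h_\lambda(v_0,\cdot)$ gives
\[
  \int_\Omega h_\lambda(v_0,\xi_0+\nabla^2\tilde\phi_\lambda)\d x\geq h_\lambda(v_0,\xi_0)\,\L^2(\Omega),
\]
and the remaining task is to show that the left-hand side differs from $\int_\Omega h_\lambda(v_0,\xi_0+\nabla^2\phi_\lambda)\d x$ by at most $C/K+o_\lambda(1)$. Outside $A_k$ the integrands agree up to a mollification error controlled by Lemma \ref{lem:hladd}(ii); inside $A_k$ I expand $\nabla^2(\chi\phi_\lambda)=\chi\nabla^2\phi_\lambda+2\nabla\chi\otimes_s\nabla\phi_\lambda+\phi_\lambda\nabla^2\chi$, bound the leading piece by $C\int_{A_k}h_\lambda(v_0,\xi_0+\nabla^2\phi_\lambda)\lesssim 1/K$ using Lemma \ref{lem:hladd}(i), and control the cross terms via Lemma \ref{lem:hladd}(ii) together with the pigeonhole bounds. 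Taking $\lambda\to\infty$ and then $K\to\infty$ yields the desired inequality.

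The hard part is the $2$-growth of $h_\lambda$ colliding with the cutoff: naively, a cross term such as $\nabla\chi\otimes\nabla\phi_\lambda$ has pointwise norm $\sim(K/\delta)|\nabla\phi_\lambda|$, and once squared in the $|\xi|^2/\sqrt\lambda$ branch of $g_\lambda$ its contribution could blow up. The remedy is threefold: the pigeonhole choice of $A_k$ makes the relevant $L^1$-norms small; Lemma \ref{lem:hladd}(i) equates $h_\lambda$ at comparable arguments up to constants; and the set $E_\lambda=\{\rho^0(S(v_0,\xi_0+\nabla^2\phi_\lambda))>\sqrt\lambda\}$, where the quadratic branch of $g_\lambda$ is active, has Lebesgue measure $O(1/\sqrt\lambda)$ by the energy bound. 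Thus on $\Omega\setminus E_\lambda$ the awkward factor $(|A|+|B|)/\sqrt\lambda$ in Lemma \ref{lem:hladd}(ii) is bounded by a constant depending on $v_0$, while on $E_\lambda$ the small measure absorbs the remaining growth.
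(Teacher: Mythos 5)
Your overall skeleton (cut off near the boundary, mollify, apply the quasiconvexity inequality for $h_\lambda(v_0,\cdot)$, and use $h_\lambda(v_0,\xi_0)\to G(v_0,\xi_0)$) is the natural one, but two steps are genuinely gapped. First, the reduction to $w_\lambda=\nabla\phi_\lambda$ is not available: the lemma is stated for arbitrary vector fields $w_\lambda$ with $\|\nabla w_\lambda\|_{L^1}\le C$, and in the paper it is in fact applied to the truncated fields $\tilde w_\lambda-w_0$, where $\nabla\tilde w_\lambda=\nabla w_0+\varphi_\lambda'\,(w_\lambda-w_0)\otimes\nabla|w_\lambda-w_0|+\varphi_\lambda\nabla(w_\lambda-w_0)$ is \emph{not} symmetric, hence not a Hessian. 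Working only with $\xi_0+\nabla^2\tilde\phi_\lambda$ and $2$-quasiconvexity proves a strictly weaker statement than the one claimed; what is needed is the ordinary (first-gradient) quasiconvexity inequality for $\xi_0+\nabla\psi$ with $\psi$ a vector field, which does hold here because $g_\lambda(S(v_0,\cdot))$ is polyconvex (appendix), but your argument as written does not deliver it.

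Second, the error control in the cutoff layer does not work as stated. The set $E_\lambda$ is defined through the original field $\xi_0+\nabla^2\phi_\lambda$, while the dangerous new contributions come from the cutoff-amplified lower-order terms $(K/\delta)|\nabla\phi_\lambda|$ and $(K/\delta)^2|\phi_\lambda|$, which can exceed $\sqrt{\lambda}$ on $\Omega\setminus E_\lambda$ (large $|\nabla\phi_\lambda|$ is perfectly compatible with small $|\nabla^2\phi_\lambda|$ and small $\|\nabla\phi_\lambda\|_{L^1(A_k)}$). Hence the factor $(|A|+|B|)/\sqrt{\lambda}$ in Lemma \ref{lem:hladd}(ii) is not bounded on $\Omega\setminus E_\lambda$ once $B$ is the modified argument, and on $E_\lambda$ the bound $\L^2(E_\lambda)\lesssim\lambda^{-1/2}$ does not absorb $|B|^2/\sqrt{\lambda}$, since $|B|$ has no pointwise bound. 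Likewise Lemma \ref{lem:hladd}(i) cannot be invoked for the piece $\xi_0+\chi\nabla^2\phi_\lambda$: the required pointwise comparability $|A|\le M|B|$ fails wherever $\xi_0+\nabla^2\phi_\lambda$ is small but $\chi<1$. To close the layer estimate you need an extra quantitative ingredient, e.g.\ a $\lambda$-dependent truncation of $w_\lambda$ chosen by pigeonhole (in the spirit of the treatment of the lower-order terms in the proof of the lower bound), or the two-dimensional embeddings $W^{1,1}\hookrightarrow L^2$ and $W^{2,1}\hookrightarrow L^\infty$, which give $\|w_\lambda\|_{L^2}\le C$ and make the quadratic-branch contribution of the cutoff terms of order $(K/\delta)^2\lambda^{-1/2}\to 0$. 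For comparison, the paper itself disposes of the lemma by quoting Lemma 6.2(i) of \cite{olbermann2017michell} and replacing $g_\lambda$ there by $g_\lambda(S(v_0,\cdot))$, so the burden of these details is delegated to that reference rather than handled by an $E_\lambda$-type dichotomy.
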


\begin{proof}
Up to details, the proof is identical to the proof of Lemma 6.2 (i)
in  \cite{olbermann2017michell}. There it is proved that
\[
\liminf_{\lambda\to\infty}\int_\Omega g_\lambda(\xi_0+\nabla w_\lambda)\d x\geq
2\L^2(\Omega)\rho^0(\xi_0)\,.
\]
In that proof, one only needs to replace $g_\lambda$ with
$g_\lambda(S(v_0,\cdot))$. Apart from the additional dependence of some of the
constants ``$C$'' on $v_0$ that appear in the proof, all arguments go through unchanged.
\end{proof}

\subsection{Blow-up of higher order gradients}

Theorem \ref{thm:MFsingularpart} describes the behavior of integrands depending on gradients under
the blow-up procedure. This  will not be quite enough for our purposes: For the jump part, our
proof will take advantage of the fact that we consider the second fundamental form of
the graph, which in turn means that we need to consider integrands that depend on first and
second derivatives.

\begin{lemma}
  \label{lem:MFvariant}
  Let $\Omega\subset\R^2$ be open and bounded.
Assume that $f:\R^2\times \R^{2\times 2}\to \R$ fulfills the following
properties:
\begin{itemize}
\item[(i)] $f$ is quasiconvex and positively one-homogeneous in the second
  argument with $f(v,\xi)\leq C|\xi|$
\item[(ii)] The functional $u\mapsto
  \int_{\Omega} f(\nabla u,\nabla^2 u)\d x$ is continuous in $W^{2,2}(\Omega)$
\end{itemize}
Furthermore assume that $u_\lambda$ is a sequence in $W^{2,2}(\Omega)$, 
 $u_\lambda\to u$ weakly * in $BH(\Omega)$,   $f(\nabla u_j,\nabla^2
  u_j)\L^n\to \mu$ weakly * in the sense of measures, and that 
  $\zeta_3$ is defined as the Radon-Nikodym derivative
\[
 \zeta_3=\frac{\d \mu}{\d (\H^1\ecke J_{\nabla u})}\,.
\]
Then
  \[
  \begin{split}
    \zeta_3(x_0)&\geq \tilde K_f(\nabla u^+(x_0),\nabla u^-(x_0),\nu_{\nabla u}(x_0))
    \quad\text{ for }|D^s\nabla u|\ecke
    J_{\nabla u}\text{ a.e. }x_0\in  \Omega \,,
  \end{split}
  \]
  where
  \[
  \begin{split}
    \tilde K_f(a,b,\nu)=\inf\left\{\int_{Q_\nu}f(\nabla w,\nabla^2 w)\d x: w\in
    \mathcal A_{a,b,\nu}\right\}
\end{split}
  \]
and
\[
\begin{split}
  \mathcal A_{a,b,\nu}&=\Bigg\{ w\in
  C^\infty(\overline{Q_\nu}):\\
&\qquad  w(x)=a\cdot x \text{ in some neighborhood of
  }\left\{x\in \partial Q_\nu:x\cdot\nu=\frac12\right\}\,,\\
&\qquad  w(x)=b\cdot x \text{ in some neighborhood of
  }\left\{x\in \partial Q_\nu:x\cdot\nu=-\frac12\right\}\,,\\
&\qquad  \nabla^k w(x+\nu^\bot)=\nabla^k w(x) \text{ for  } x\cdot
\nu^\bot=-\frac12 \text{ and } k=1,2,\dots\Bigg\}\,.
\end{split}
\]

\end{lemma}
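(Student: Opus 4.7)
The plan is to mimic the blow-up argument in the proof of Theorem \ref{thm:MFsingularpart} from \cite{MR1218685}, but to blow up the $BH$-function $u$ itself at rate $\rho^{-1}$ rather than blowing up $\nabla u$ at rate $\rho^{0}$, so that $\nabla u$ develops a nontrivial jump in the limit and $\nabla^2 u$ concentrates along the jump line. First I would fix a ``good'' point $x_0\in J_{\nabla u}$ at which all of the following hold simultaneously: (a) the Radon--Nikodym limit $\zeta_3(x_0)=\lim_{\rho\downarrow 0}\rho^{-1}\mu(Q_\nu(x_0,\rho))$ exists (Theorem \ref{thm:RN}); (b) the rescaled functions $u^{(\rho)}(y):=\rho^{-1}(u(x_0+\rho y)-\alpha_\rho(y))$ converge, for a suitable affine correction $\alpha_\rho$, in $W^{1,1}(Q_\nu)$ to the piecewise-affine limit $u_0(y)=a\cdot y$ on $\{y\cdot\nu>0\}$ and $u_0(y)=b\cdot y$ on $\{y\cdot\nu<0\}$, writing $a=\nabla u^+(x_0)$, $b=\nabla u^-(x_0)$, $\nu=\nu_{\nabla u}(x_0)$; and (c) $\rho^{-1}|D\nabla u|(Q_\nu(x_0,\rho))\to|a-b|$. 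Note that $a-b\parallel\nu$ because $D\nabla u$ is a symmetric matrix-valued measure on a gradient field, so $u_0$ is continuous across the hyperplane $\{y\cdot\nu=0\}$ and defines the natural boundary data of $\mathcal A_{a,b,\nu}$.

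Next, choose radii $\rho_j\downarrow 0$ with $\mu(\partial Q_\nu(x_0,\rho_j))=0$. Using $f(\nabla u_\lambda,\nabla^2 u_\lambda)\L^2\to\mu$ weakly $*$,
\[
\zeta_3(x_0)=\lim_{j\to\infty}\rho_j^{-1}\lim_{\lambda\to\infty}\int_{Q_\nu(x_0,\rho_j)}f(\nabla u_\lambda,\nabla^2 u_\lambda)\,\d x.
\]
Changing variables $y=\rho_j^{-1}(x-x_0)$ and using positive $1$-homogeneity of $f(v,\cdot)$ to absorb the scaling (the Jacobian contributes $\rho_j^2$, the rescaling of $\nabla^2$ contributes $\rho_j^{-2}$, and the $1$-homogeneity together with the outer factor $\rho_j^{-1}$ matches the remaining scaling), a standard diagonal argument extracts $\lambda_j\to\infty$ and $w_j(y):=\rho_j^{-1}(u_{\lambda_j}(x_0+\rho_j y)-\alpha_j(y))\in W^{2,2}(Q_\nu)$ with $w_j\to u_0$ in $W^{1,1}(Q_\nu)$ and
\[
\zeta_3(x_0)\geq\liminf_{j\to\infty}\int_{Q_\nu}f(\nabla w_j,\nabla^2 w_j)\,\d y.
\]

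It remains to modify $w_j$ to land in $\mathcal A_{a,b,\nu}$. I would proceed in three steps. First, by a Fonseca--M\"uller-style mean-value argument in thin shells adjacent to the faces $\{y\cdot\nu=\pm 1/2\}$, cut off $w_j$ and smoothly glue to the exact affine functions $a\cdot y$ and $b\cdot y$ in neighbourhoods of those faces; the linear growth $f(v,\xi)\leq C|\xi|$ from hypothesis (i) together with the bound on $\int_{Q_\nu}|\nabla^2 w_j|\,\d y=\rho_j^{-1}|D\nabla u_{\lambda_j}|(Q_\nu(x_0,\rho_j))$ ensures this adds only $o(1)$ energy. Second, a ``step-and-stagger'' restriction to a minimal-energy strip in the $\nu^\bot$-direction followed by periodic extension enforces the tangential periodicity of all derivatives, once again at $o(1)$ cost. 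Third, mollification produces smooth representatives in $C^\infty(\overline{Q_\nu})$ satisfying the boundary and periodicity conditions (which are prescribed only on neighbourhoods), and hypothesis (ii) guarantees that the energies converge. The resulting test functions lie in $\mathcal A_{a,b,\nu}$ and certify $\tilde K_f(a,b,\nu)\leq\zeta_3(x_0)$.

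The main obstacle is the first modification. Since $f$ depends on \emph{both} $\nabla w$ and $\nabla^2 w$, the gluing region must simultaneously match the prescribed affine gradient $a$ (or $b$) and keep the $L^1$-norm of $\nabla^2 w_j$ small: a naive convex combination of $w_j$ with $a\cdot y$ in a layer of width $\delta$ creates Hessians of order $\delta^{-1}\|\nabla w_j - a\|$ in that layer, which need not be controllable without a uniform $L^\infty$-bound on $\nabla w_j$. One must therefore choose the shell radius by averaging over a range of radii and exploit the fact that, in the limit, $\nabla w_j$ is uniformly close to $a$ (or $b$) away from the jump hyperplane. This complication is absent in the pure $BV$ setting of \cite{MR1218685}, where the gradient itself is the jumping quantity, and is precisely what necessitates the separate treatment given here.
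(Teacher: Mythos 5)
Your overall skeleton is the same as the paper's: blow up $u$ at rate $\rho^{-1}$ around an $\H^1\ecke J_{\nabla u}$-generic point, use $1$-homogeneity in the second argument to absorb the scaling (your bookkeeping $\nabla w_j(y)=\nabla u_\lambda(x_0+\rho y)$, $\nabla^2 w_j(y)=\rho\,\nabla^2u_\lambda(x_0+\rho y)$ is correct), diagonalize, and then modify the blow-up sequence into the admissible class. But there is a genuine gap in how you enforce the lateral periodicity built into $\mathcal A_{a,b,\nu}$. Your ``step-and-stagger'' device --- restrict to a minimal-energy strip in the $\nu^\bot$-direction and extend periodically --- does not produce an admissible function: the derivative traces of $w_j$ on the two lateral edges of the chosen strip have no reason to match, so the periodic extension is not even $C^1$, let alone in $\mathcal A_{a,b,\nu}$; and any gluing across those edges faces exactly the difficulty you yourself identified for the top/bottom faces, but now \emph{without} a fixed target field, since near the jump plane $\nabla w_j$ is close to no constant. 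Anisotropic rescaling of a thin strip does not help either, because $f$ is $1$-homogeneous only under isotropic scaling of $\xi$ and the first argument does not scale. The paper sidesteps this entirely by gluing $u_j$ not to the two affine functions near the top and bottom faces, but to the mollified one-dimensional profile $U_j=\eta_{\rho_j}*U$ on the whole annular frame $Q_\nu\setminus Q_{i-1,l}$ (where $U$ is the piecewise affine blow-up limit): since $\nabla^k U_j$ depends only on $x\cdot\nu$ for $k\geq1$, and $U_j$ coincides with $a\cdot x$, $b\cdot x$ near the faces $x\cdot\nu=\pm1/2$, the glued functions are automatically periodic in the $\nu^\bot$-direction and carry the correct boundary data, so no separate periodization step is needed.

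A secondary inaccuracy: your proposed cure for the Hessian terms created by the cut-off, namely that ``in the limit, $\nabla w_j$ is uniformly close to $a$ (or $b$) away from the jump hyperplane,'' is not available --- the convergence $\nabla w_j\to\nabla U$ is only in $L^1$. What actually makes the gluing affordable (and what you half-gesture at with ``averaging over a range of radii'') is the De Giorgi slicing used in the paper: one takes $K_l\to\infty$ nested frames of thickness $s_l$, chooses $j(l)$ so large that $\fint_{T_{i,l}}|u_j-U_j|<s_l^2$ and $\fint_{T_{i,l}}|\nabla u_j-\nabla U_j|<s_l$ on every shell, and then averages over $i$, so that the genuine Hessian contributions $\|\nabla^2u_j\|_{L^1}+\|\nabla^2U_j\|_{L^1}$ get divided by $K_l$; together with the linear growth $f(v,\xi)\leq C|\xi|$ this gives the $o(1)$ cost, with no $L^\infty$ information on $\nabla w_j$ whatsoever. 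If you replace your steps 1--2 by this single gluing-to-$U_j$ construction with shell averaging, and keep your initial smoothing via hypothesis (ii) (the paper smooths $u_j$ at the outset rather than at the end), your argument becomes the paper's proof.
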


\begin{proof}

We write $\nu\equiv \nu_{\nabla u}(x_0)$.
With 
\[
u_\lambda^{(\rho)}(x)=\rho^{-1} \left(u_\lambda(x_0+\rho
  x)-u_\lambda(x_0)\right)\,,\qquad  U(x)= \begin{cases}\nabla u^+(x_0)\cdot x &\text{
    if }x\cdot\nu\geq 0\\\nabla u^-(x_0)\cdot x&\text{ if } x\cdot \nu<
  0\,,\end{cases}
\]
we have that for $|D\nabla u|\ecke J_{\nabla u}$ almost every $x_0$,
$\lim_{\rho\to 0}\lim_{\lambda\to\infty}u_\lambda^{(\rho)}= U$ in $W^{1,1}(Q_\nu)$, see Theorem 3.77
in
\cite{MR1857292}. Additionally,  
\[
\zeta_3(x_0)=\lim_{\rho\to 0}\lim_{\lambda\to
  \infty}\rho^{-1}\int_{Q_\nu(x_0,\rho)}f(\nabla u_\lambda,\nabla^2
u_\lambda)\d x\,.
\]
Choose $\rho_j\to 0,\lambda_j\to \infty$ such that $u_{\lambda_j}^{(\rho_j)}\to
U$ in $W^{1,1}(Q_\nu)$ and 
\[
\begin{split}
  \zeta_3(x_0)&=\lim_{j\to\infty}\rho_j^{-1}\int_{Q_\nu(x_0,\rho_j)}f(\nabla
  u_{\lambda_j},\nabla^2
  u_{\lambda_j})\d x\,.
\end{split}
\]
We write
$u_j:=u_{\lambda_j}^{(\rho_j)}$. 
We set $U_j:=\eta_{\rho_j}*U$. $U_j$ is affine on the slices orthogonal to $\nu$.
With this notation, we have
\[
  \zeta_3(x_0)=\lim_{j\to\infty}\int_{Q_\nu}f(\nabla
  u_{j},\nabla^2
  u_j)\d x\,. 
  \]
Hence it remains  to show
\begin{equation}
\tilde K_f(\nabla u^+(x_0),\nabla u^-(x_0),\nu)\leq \lim_{j\to\infty}\int_{Q_\nu}f(\nabla
  u_{j},\nabla^2
  u_j)\d x\,.\label{eq:32}
  \end{equation}
By the continuity assumption (ii), we may assume that $u_j\in C^\infty(\overline{Q_\nu})$
in the proof of \eqref{eq:32}.

\medskip

For $l\in \N$, let $K_l\in\N$ be the smallest integer that satisfies
\[
K_l>l\sup_j\left\{\|u_j\|_{W^{2,1}}+\|U_j\|_{W^{2,1}}\right\}\,,
\]
and
\[
\alpha_l:=\max\left(\frac{1}{l},\sup\{\|u_j-U_j\|_{W^{1,1}}:j>l\}\right)\,,\qquad
s_l:=\frac{\alpha_l}{K_l}\,.
\]
Note that $\alpha_l\to 0$ as $l\to \infty$.
For $i=0,\dots,K_l$, let
\[
Q_{i,l}=(1-\alpha_l+i\, s_l)Q_\nu\,.
\]
Consider a family of cut-off functions $\{\varphi_{i,l}:i=1,\dots,K_l\}$ with 
\[
\varphi_{i,l}\in C_c^\infty(Q_{i,l})\,,\quad 0\leq \varphi_{i,l}\leq 1\,,\quad\varphi_{i,l}=1
\text{ on }
Q_{i-1,l}\,,\quad\|\nabla^k \varphi_{i,l}\|_{L^\infty}=O(s_l^{-k})\text{ for } k=1,2\,.
\]
For $j>l$, we define 
\[
\tilde u_j^{i,l}:=\varphi_{i,l} u_j+(1-\varphi_{i,l}) U_j\,.
\]
We have that $\tilde u_j^{i,l}\in \mathcal A_{\nabla u^+(x_0),\nabla
  u^-(x_0),\nu}$ (for $j$ large enough).
On $Q_{i,l}\setminus Q_{i-1,l}$, we have 
\[
\begin{split}
  \nabla^2 \tilde u_j^{i,l}&= (u_j-U_j)\nabla^2\varphi_{i,l}+\nabla
  (u_j-U_j)\otimes \nabla\varphi_{i,l}\\
  &\quad + \nabla\varphi_{i,l}\otimes \nabla
  (u_j-U_j)+\varphi_{i,l}\nabla^2(u_j-U_j)+\nabla^2 U_j\,.
\end{split}
\]

Now we may estimate, for every $i=1,\dots, K_l$,
\begin{equation}
\begin{split}
  \int_{Q_\nu} f(\nabla \tilde u_j^{i,l},\nabla^2 \tilde u_j^{i,l})\d x&
  \leq \int_{Q_{i-1,l}} f(\nabla u_j,\nabla^2 u_j)\d x
 + C\int_{Q_{i,l}\setminus Q_{i-1,l}}|\nabla^2 \tilde u_j^{i,l}|\d x\\
&\quad + C\int_{Q_\nu\setminus Q_{i,l}}|\nabla^2 U_j|\d x\\
&\leq  \int_{Q_{i-1,l}} f(\nabla u_j,\nabla^2 u_j)\d x\\
&\quad + C\int_{Q_{i,l}\setminus Q_{i-1,l}}s_l^{-2}|u_j-U_j|+s_l^{-1}|\nabla u_j-\nabla
 U_j|+|\nabla^2 u_j|+|\nabla^2 U_j|\d x\\
&\quad + C\int_{Q_\nu\setminus Q_{i,l}}|\nabla^2 U_j|\d x\\
\end{split}\label{eq:5}
\end{equation}
We write $T_{i,l}=Q_{i,l}\setminus Q_{i-1,l}$, and  choose an increasing sequence $j(l)$ with $j(l)>l$ such that for every $i=1,\dots,K_l$,
\[
\begin{split}
  \fint_{T_{i,l}} \left|u_j-U_j\right|\d x&<s_l^2\\
  \fint_{T_{i,l}} \left|\nabla u_j-\nabla U_j\right|\d x&
  <s_l\,.
\end{split}
\]
This is possible by $\|u_j-U_j\|_{W^{1,1}}\to 0$.
With the help of these estimates,
the second error term in \eqref{eq:5} for $j=j(l)$ can be estimated as follows,
\[
\begin{split}
  \int_{T_{i,l}}&s_l^{-2}|u_j-U_j|+s_l^{-1}|\nabla u_j-\nabla
  U_j|+|\nabla^2 u_j|+|\nabla^2 U_j|\d x\\
    &\leq C \left(\|\nabla^2 u_j\|_{L^1(T_{i,l})}+ \|\nabla^2 U_j\|_{L^1(T_{i,l})}+s_l\right)\,.
  \end{split}
\]
Summing over all $i$ and averaging, we obtain
\[
\begin{split}
  \frac{1}{K_l}\sum_{i=1}^{K_l} \int_{Q_\nu} f(\nabla \tilde u_j^{i,l},\nabla^2
  \tilde u_j^{i,l})\d x &\leq \int_{Q_\nu} f(\nabla u_j,\nabla^2 u_j)\d x
  + \frac{C}{K_l}\int_{Q_\nu}|\nabla^2 U_j|\d x\\
  &\quad +\frac{C}{K_l}\int_{Q_\nu}(|\nabla^2 u_j|+|\nabla^2 U_j|+1)\d x+C s_l
\end{split}
\]
Since the error terms vanish for $l\to \infty$, we can choose
$i=i(l)\in\{1,\dots,K_l\}$ such that
\[
\liminf_{l\to \infty}\int_{Q_\nu} f(\nabla \tilde u_j^{i,l},\nabla^2
  \tilde u_j^{i,l})\d x
\leq \lim_{j\to \infty}\int_{Q_\nu} f(\nabla  u_j,\nabla^2
   u_j)\d x\,.
\]
Since $\tilde u_j^{i,l}\in \mathcal A_{\nabla u^+(x_0),\nabla u^-(x_0),\nu}$, the last equation proves
\eqref{eq:32}.
\end{proof}

\subsection{Geometric considerations}
\label{sec:geometry}

We will need to apply Lemma \ref{lem:MFvariant} to the following particular
choice of integrand:
\[
G_\infty(v,\xi)=2\left| S(v,\xi)\right|_{\infty} \sqrt{1+|v|^2}\,.
\]

By some geometric considerations, we are able to determine  $\tilde
K_{G_\infty}$ in Lemma \ref{lem:Kcalc} below. We start with a preparatory
lemma. The assumptions are chosen such that we may apply the lemma to graphs of functions
in $\A_{a,b,\nu}$ as defined in Lemma \ref{lem:MFvariant} with $\nu=e_2$, see Figure \ref{fig:norot}.

\begin{lemma}
  \label{lem:slice}
Let $M$ be an oriented $C^2$ submanifold of $\R^3$ with the following
properties:
\begin{itemize}
\item[(i)] $M$ is diffeomorphic to a square
\item[(ii)] There exists $l>0$ and for each $x_1\in[0,l]$ there exists a $C^2$
  curve $\gamma_{x_1}$ contained in $\{x_1\}\times[0,1]\times\R$ with its two endpoints
  in $\{x_1\}\times\{0\}\times\R$ and $\{x_1\}\times\{1\}\times\R$ respectively, such that
\[
M=\bigcup_{x_1\in[0,l]}\gamma_{x_1}\,.
\]
\item[(iii)] There exist $\NN_0,\NN_1\in S^2$ such that the for each $x_1\in
  [0,l]$, the surface normals in the endpoints of $\gamma_{x_1}$ are given by
  $\NN_0,\NN_1$ respectively.
\end{itemize}
Then 
\[
\int_{M}|S_{M}|_\infty\d\H^2\geq l \arccos \NN_0\cdot\NN_1\,,
\]
and equality holds if  any two  curves $\gamma_{x_1}, \gamma_{x_1'}$ are
parallel translations of each other
in $x_1$ direction, and
their curvature does not change sign.
\end{lemma}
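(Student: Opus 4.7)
The plan is to reduce the problem to a slicing argument combined with a Gauss-map computation along each slice.

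For each $x_1 \in [0,l]$, let $\NN$ denote the Gauss map of $M$ restricted to the curve $\gamma_{x_1}$, parametrized by arc length $s$. By assumption (iii), the two endpoint values of $\NN$ along $\gamma_{x_1}$ are $\NN_0$ and $\NN_1$, independently of $x_1$. Using Weingarten's equation, the derivative of the Gauss map along a curve with unit tangent $T$ is $\frac{\d \NN}{\d s} = -S_M(T)$, so
\[
\left|\frac{\d\NN}{\d s}\right| = |S_M(T)| \leq |S_M|_\infty.
\]
The image of $\gamma_{x_1}$ under $\NN$ is a $C^1$ curve on $S^2$ joining $\NN_0$ and $\NN_1$, so its length is at least the geodesic distance $\arccos \NN_0 \cdot \NN_1$. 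Combining,
\[
\int_{\gamma_{x_1}} |S_M|_\infty \,\d\H^1 \geq \int_0^{\H^1(\gamma_{x_1})} \left|\tfrac{\d\NN}{\d s}\right|\d s \geq \arccos \NN_0\cdot\NN_1.
\]

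Next I would integrate this inequality over $x_1 \in [0,l]$ and apply the coarea formula on $M$ to the projection $\pi_1:M\to\R$, $\pi_1(x)=x_1$. By assumption (ii) the level sets $\pi_1^{-1}(x_1)$ coincide with $\gamma_{x_1}$ for $x_1\in[0,l]$, so
\[
\int_0^l \int_{\gamma_{x_1}} |S_M|_\infty \,\d\H^1 \,\d x_1 = \int_M |S_M|_\infty\, |\nabla^M \pi_1|\,\d\H^2 \leq \int_M |S_M|_\infty \,\d\H^2,
\]
where $\nabla^M \pi_1$ is the tangential gradient and the bound $|\nabla^M \pi_1|\leq 1$ holds because $\nabla^M \pi_1$ is the tangential projection of the unit vector $e_1$. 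This yields the claimed inequality.

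For the equality case under the hypotheses that all $\gamma_{x_1}$ are $e_1$-translates of a fixed curve $\gamma$ and $\kappa_\gamma$ does not change sign: the translation invariance makes $e_1$ tangent to $M$ everywhere, so $|\nabla^M \pi_1|=1$ a.e. and the last coarea inequality becomes an equality. On the other hand, $M$ is then a cylinder over $\gamma$, whose principal curvatures are $\{0,\kappa_\gamma\}$, giving $|S_M|_\infty = |\kappa_\gamma|$; moreover the normal $\NN$ lies in the plane of $\gamma$ and rotates there, so $\NN\circ\gamma$ traces an arc of the great circle $\{\nu\in S^2:\nu\cdot e_1=0\}$, whose speed is exactly $|\kappa_\gamma|$. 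Since $\kappa_\gamma$ does not change sign, this arc is monotone on $S^2$, hence minimizing, giving $\int_\gamma |\kappa_\gamma|\d s = \arccos \NN_0\cdot\NN_1$. Thus both inequalities in the chain become equalities.

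The main obstacle I expect is verifying rigorously that the level sets of $\pi_1$ on $M$ match the given $\gamma_{x_1}$ (so that the coarea formula directly yields the desired slice integral) and identifying $|\nabla^M \pi_1|$; once the geometric picture of $M$ as a family of plane curves is set up correctly, the Weingarten computation and spherical geodesic argument are essentially forced.
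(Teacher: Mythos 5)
Your proof is correct and follows essentially the same route as the paper: slice $M$ by $x_1$, bound the Gauss-map derivative along each slice by $|S_M|_\infty$ via Weingarten, compare the length of the spherical image with the geodesic distance $\arccos\NN_0\cdot\NN_1$, and handle the equality case via the cylinder structure. The only difference is cosmetic: you make the slicing step explicit through the coarea formula with $|\nabla^M\pi_1|\le 1$, which the paper leaves implicit.
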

\begin{proof}
Looking at the slices for  $x_1=$constant,  we have that
\[
  \int_{M}|S_M|_\infty\d\H^2 \geq \int_0^l
  \int_{\gamma_{x_1}}|S_M|_\infty\d\H^1\,.
\]
Denoting by $N_{x_1}$ a differentiable choice of a normal to $M$ along $\gamma_{x_1}$, we have that the derivative of the normal
$DN_{x_1}$ fulfills
\[|DN_{x_1}|_\infty\leq
|S_M|_\infty\,.
\] 
Hence, by the fundamental theorem of calculus, and letting
$\dist_{S^2}(\cdot,\cdot)$ denote the geodesic distance on $S^2$,
\[
\int_{\gamma_{x_1}}|S_M|_\infty\d\H^1\geq 
  \int_{\gamma_{x_1}}|D N_{x_1}|\d\H^1\geq \dist_{S^2}(\NN_0,\NN_1)=
 \arccos \NN_0\cdot \NN_1\,.
\]
The claimed inequality follows. If the curves $\gamma_{x_1}$ are parallel translations of
each other in $x_1$-direction  and
their curvature does not change sign, then the inequalities become sharp.
\end{proof}

\begin{figure}[h]
\begin{subfigure}{.45\textwidth}
\includegraphics[height=5cm]{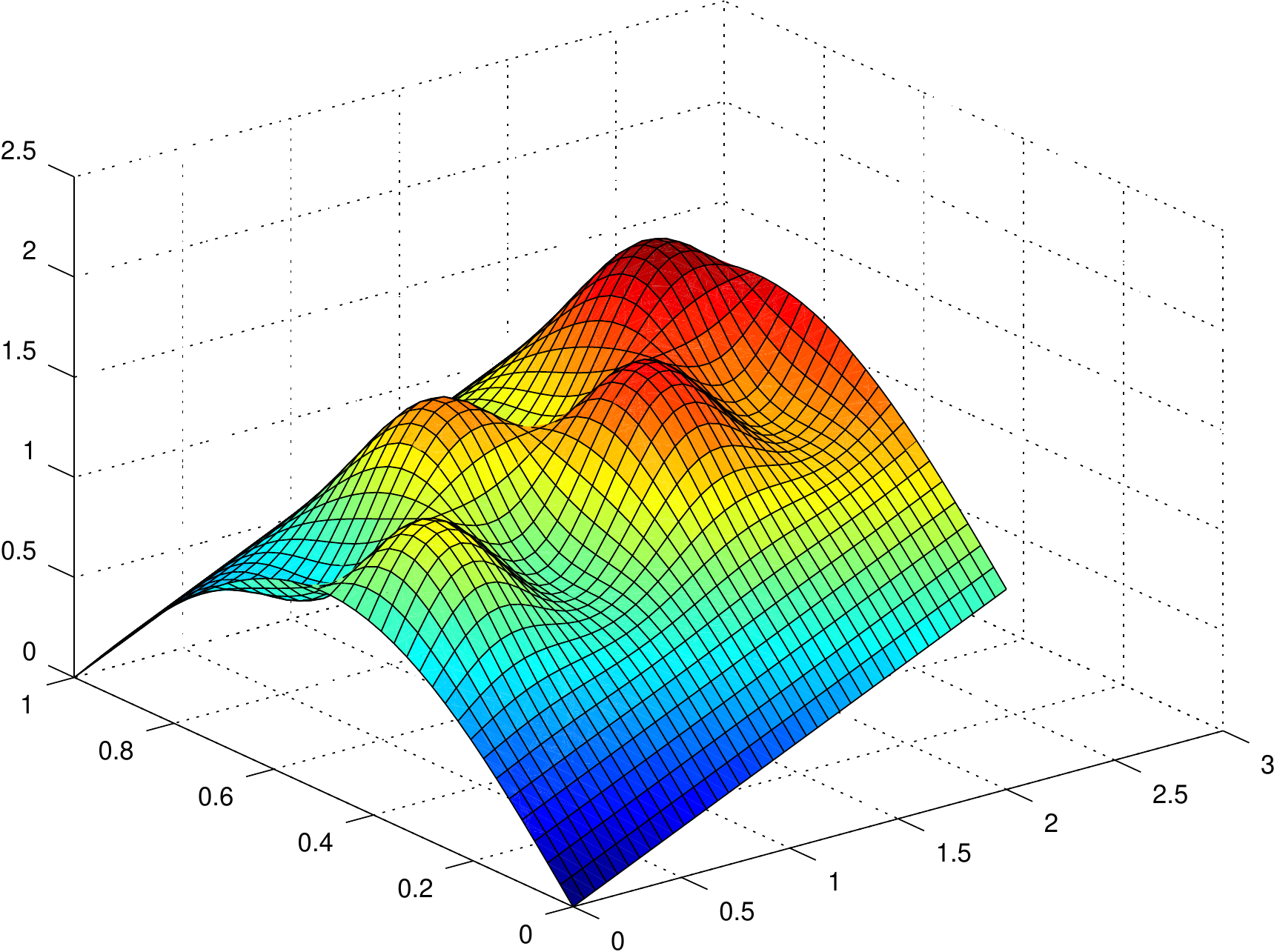}
\caption{The  graph of some function $w$ to which Lemma \ref{lem:slice} may be applied.\label{fig:norot}}
\end{subfigure}
\hspace{5mm}
\begin{subfigure}{.45\textwidth}
\includegraphics[height=5cm]{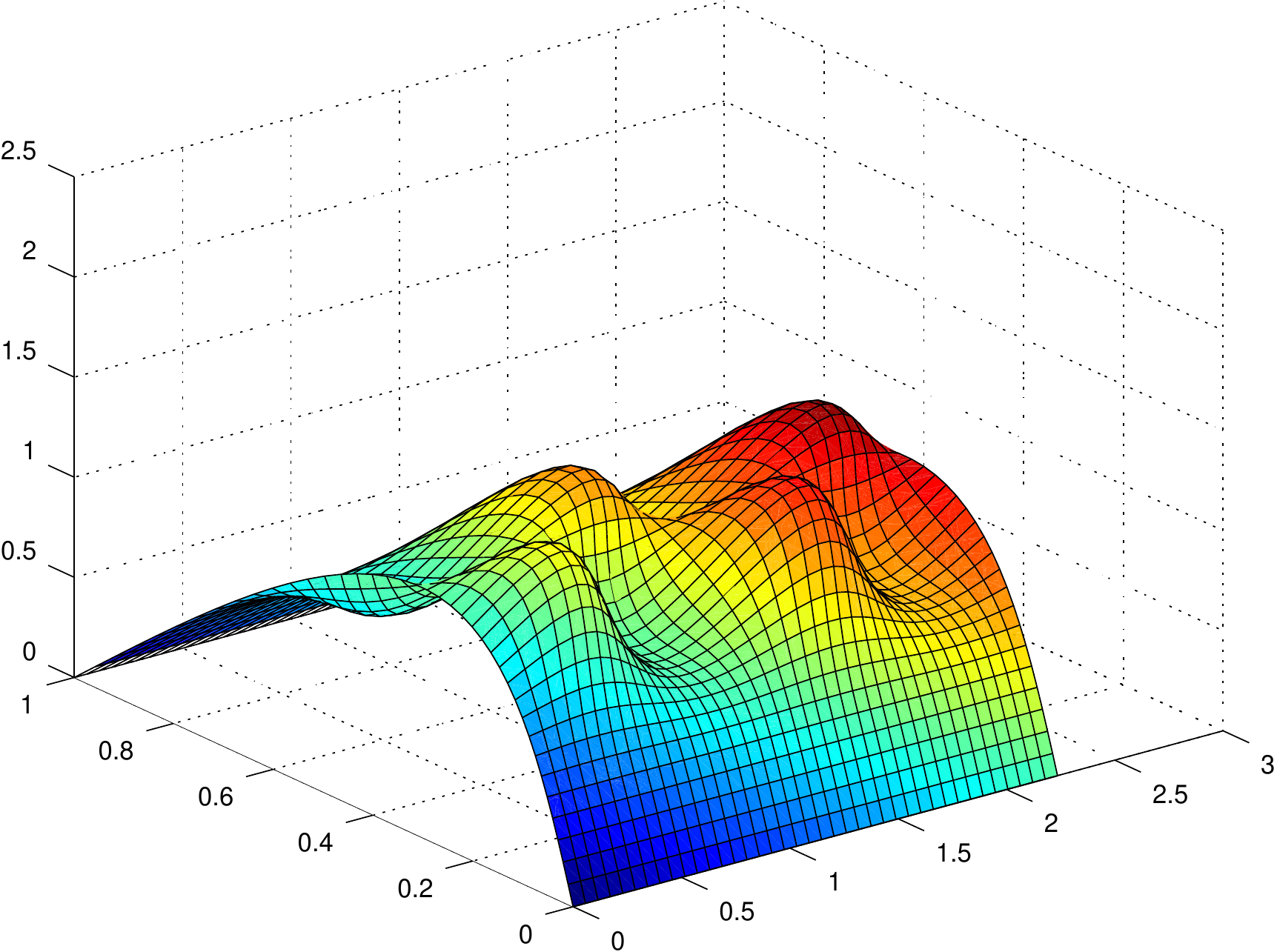}
\caption{After applying a suitable Euclidean motion $R$, we have $R(\gr \, w|_{[0,l]\times\{0\}})\subset \R\times
  \{(0,0)\}$ and $R(\gr \, w|_{[0,1]\times\{1\}})\subset \R\times
  \{(1,0)\}$. \label{fig:rot}} 
\end{subfigure}
\end{figure}

\begin{lemma}
\label{lem:Kcalc}
  Let $a,b\in \R^2$, $\nu\in S^1$ with $a\cdot \nu^\bot=b\cdot\nu^\bot$, and
  $G_\infty(v,\xi)=2|S(v,\xi)|_\infty\sqrt{1+|v|^2}$. Then with $\tilde K$
  defined as in the statement of Lemma \ref{lem:MFvariant}, we have that
\[
\tilde K_{G_\infty}(a,b,\nu)=2\sqrt{1+|a\cdot \nu^\bot|^2}\arccos \NN(a)\cdot \NN(b) \,.
\]
\end{lemma}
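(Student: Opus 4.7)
My plan is to prove matching upper and lower bounds. After rotating the ambient coordinates I may assume $\nu = e_2$, so that $\nu^\bot = e_1$, and write $s := a\cdot\nu^\bot = a_1 = b_1$.

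For the upper bound I will use the explicit test function $w(x_1,x_2) = s x_1 + \phi(x_2)$, where $\phi \in C^\infty([-1/2,1/2])$ coincides with $a_2 x_2$ near $x_2 = 1/2$ and with $b_2 x_2$ near $x_2 = -1/2$, and has $\phi''$ of constant sign (obtained by interpolating $\phi'$ monotonically between $b_2$ and $a_2$). Since $\nabla w = (s,\phi'(x_2))$ is independent of $x_1$, this $w$ lies in $\mathcal A_{a,b,\nu}$. A direct computation shows that $S(\nabla w,\nabla^2 w)$ is rank one with nonzero eigenvalue $\phi''(1+s^2)/(1+s^2+(\phi')^2)^{3/2}$, and hence
$$\int_{Q_\nu} G_\infty(\nabla w,\nabla^2 w)\,dx = 2(1+s^2)\int_{-1/2}^{1/2}\frac{|\phi''(x_2)|}{1+s^2+\phi'(x_2)^2}\,dx_2.$$
The substitution $u = \phi'(x_2)$ evaluates this to $2\sqrt{1+s^2}\bigl(\arctan(a_2/\sqrt{1+s^2}) - \arctan(b_2/\sqrt{1+s^2})\bigr)$, and the angle-subtraction identity shows that the cosine of the bracketed angle equals $\NN(a)\cdot\NN(b)$, yielding the upper bound $2\sqrt{1+s^2}\arccos\NN(a)\cdot\NN(b)$.

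For the lower bound I will pass to a periodic extension. Given $w\in\mathcal A_{a,b,\nu}$, the $\nu^\bot$-periodicity of all $\nabla^k w$ lets me smoothly extend $w$ to $\tilde w:\R\times[-1/2,1/2]\to\R$ with $\tilde w(x_1+1,x_2) = \tilde w(x_1,x_2) + s$. The graph $\Sigma_\infty = \gr\tilde w$ has straight top and bottom edges along the common tangent direction $(1,0,s)\in\R^3$, with constant surface normals $\NN(a)$ and $\NN(b)$ respectively. I slice $\Sigma_\infty$ by the level sets of $f(y) = (y_1 + s y_3)/\sqrt{1+s^2}$, which has unit gradient and increases by exactly $\sqrt{1+s^2}$ under the period translation $(1,0,s)$. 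Crucially, $f$ restricts to a strictly monotone bijection $\R\to\R$ on each of the two boundary lines of $\Sigma_\infty$, so for every $c\in\R$ each edge meets $\{f=c\}$ in a single point. For a regular value $c$ (generic by Sard), the level set $L_c := \Sigma_\infty\cap\{f=c\}$ is therefore a smooth $1$-manifold with exactly two boundary points, so one of its components must be an arc $\alpha_c$ joining top to bottom. Arguing as in the proof of Lemma~\ref{lem:slice} via $|DN|\leq |S|_\infty$, the Gauss-map length estimate along $\alpha_c$ gives $\int_{\alpha_c}|S|_\infty\,d\H^1 \geq \arccos\NN(a)\cdot\NN(b)$.

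The coarea formula for $f|_{\Sigma_\infty}$, using $|\nabla^{\Sigma_\infty}f|\leq 1$, then yields for every $L>0$
$$\int_{\Sigma_\infty\cap\{0\leq f\leq L\sqrt{1+s^2}\}} |S|_\infty\,d\H^2 \geq L\sqrt{1+s^2}\arccos\NN(a)\cdot\NN(b).$$
Conversely, the $(1,0,s)$-periodicity of $\Sigma_\infty$ together with the boundedness of the $f$-range on one fundamental domain bounds the left-hand side above by $(L+C(w))\int_{\Sigma}|S|_\infty\,d\H^2$, where $\Sigma = \gr w|_Q$. Dividing by $L$ and sending $L\to\infty$ gives $\int_\Sigma|S|_\infty\,d\H^2 \geq \sqrt{1+s^2}\arccos\NN(a)\cdot\NN(b)$, and hence $\tilde K_{G_\infty}(a,b,\nu)\geq 2\sqrt{1+s^2}\arccos\NN(a)\cdot\NN(b)$. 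The main obstacle is the topological step producing $\alpha_c$: it rests on the count of exactly two boundary points of $L_c$, which in turn forces the strict monotonicity of $f$ along both boundary lines. This is precisely why I slice in the common-tangent direction $(1,0,s)/\sqrt{1+s^2}$ rather than by the parameter-domain coordinate $x_1$, which would leave the top- and bottom-edge projections misaligned and only deliver the non-sharp constant coming from their overlap.
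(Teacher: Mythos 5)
Your argument is correct, and it reaches the same value by a genuinely different execution than the paper's. For the upper bound the paper takes the same class of competitors (gradient constant in the $\nu^\bot$-direction, $\nu\cdot\nabla w$ monotone across the strip) but obtains attainment by quoting the equality case of its slicing lemma (Lemma \ref{lem:slice}), whereas you evaluate the energy explicitly via the substitution $u=\phi'$ and the $\arctan$ angle-difference identity; both are fine, and your computation in fact makes the identification of the constant $2\sqrt{1+s^2}\arccos\NN(a)\cdot\NN(b)$ more transparent. One small point to pin down: membership in $\mathcal A_{a,b,\nu}$ requires $w=a\cdot x$ and $w=b\cdot x$ exactly (not just up to a constant) near the two edges, so your profile must also satisfy $\int_{-1/2}^{1/2}\phi'\,\d x_2=(a_2+b_2)/2$; this is easily arranged, e.g.\ by taking $\phi'=\tfrac{a_2+b_2}{2}+\tfrac{a_2-b_2}{2}\chi$ with $\chi$ smooth, odd, non-decreasing, $\chi=\pm1$ near $\pm1/2$, which keeps $\phi''$ of one sign. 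For the lower bound the paper applies a Euclidean motion straightening both boundary curves into segments parallel to the $x_1$-axis, cuts and re-glues one period using the periodicity of $\nabla^k w$, and then slices the resulting $C^2$ surface by $x_1=\mathrm{const}$ (Lemma \ref{lem:slice}); geometrically these slicing planes are exactly your planes orthogonal to the common tangent direction $(1,0,s)/\sqrt{1+s^2}$. You instead avoid the rotation and the cut-and-glue altogether by extending periodically, producing a connecting arc in a.e.\ slice via Sard plus the count of exactly two boundary points of the compact level set (compactness holds since $f\to\pm\infty$ along $\Sigma_\infty$, and every $c$ is regular for $f$ restricted to the two straight boundary lines), and then combining the coarea formula with $|\nabla^{\Sigma_\infty}f|\le1$ and an amortization over $L\to\infty$ periods to kill the boundary mismatch. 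What the paper's route buys is that one period suffices and no measure-theoretic slicing machinery is needed; what yours buys is that you never have to argue that the re-glued surface is a $C^2$ manifold, and the topological step that the paper's Lemma \ref{lem:slice} hypothesis (each slice is a single curve joining the two edges) implicitly relies on is made explicit in your boundary-point count.
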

\begin{proof}
Let $w\in \A_{a,b,\nu}$. After a rotation of the coordinate system, we may
assume that $\nu= e_2$ and $a_1=b_1$. Let $M_1$ denote the graph of $w$. By
applying a suitable 
Euclidean motion (namely, a rotation
 with axis parallel to $e_2$ and a translation), we may map $\gr\, w|_{[0,1]\times \{0\}}$ to
$[0,\sqrt{1+a_1^2}]\times\{(0,0)\}$ and   $\gr\, w|_{[0,1]\times \{1\}}$ to
$[0,\sqrt{1+a_1^2}]\times\{(1,0)\}$ respectively, see Figure \ref{fig:rot}. Let us denote the
resulting submanifold of $\R^3$ by $M_2$. By the periodicity 
of $\nabla^k w$ for $k\in \{1,2\}$ in $x_1$-direction, we may translate $M_2\cap [0,l]\times[0,1]\times\R$ in $x_1$-direction by
$l=\sqrt{1+a_1^2}$, and the resulting set will still be a $C^2$ submanifold,  with $\int_{M_3}|S_{M_3}|_\infty\d\H^2=\int_{M_1}|S_{M_1}|_\infty\d\H^2$. To
$M_3$, we may apply Lemma \ref{lem:slice} to obtain the claimed lower bound. If $\nabla w$
is constant in $x_1$ direction and $e_2\cdot \nabla w$ is monotone in $x_2$ direction, the second
part of that lemma yields that the bound is also attained.
\end{proof}

\section{Proof of the main theorem}
\label{sec:proof-main-theorem}
\subsection{Compactness}

\begin{proof}[Proof of Theorem \ref{thm:main} (i)]
Using $\|\nabla u_\lambda\|_{L^\infty}<C$, we have that 
\begin{equation}
|\nabla^2 u_\lambda|\leq C |S(\nabla
u_\lambda,\nabla^2 u_\lambda)|\label{eq:10}
\end{equation}
By Lemma \ref{lem:hladd} (iii), we have that
\begin{equation}
|\xi|\leq g_\lambda(\xi)\quad \text{ for all } \xi \in \Rsym\,.\label{eq:33}
\end{equation}
From \eqref{eq:10} and \eqref{eq:33} it follows that
\[
|\nabla^2 u_\lambda|\leq h_\lambda(\nabla u_\lambda,\nabla^2 u_\lambda)\,,
\]
and hence
\[
\limsup \|\nabla^2 u_\lambda\|_{L^1(\Omega)}\leq C\,.
\]
By Theorem \ref{thm:BHcompact}, we obtain the weak * convergence in $BH$ for
a subsequence.
\end{proof}

\subsection{Lower bound}

\begin{proof}[Proof of Theorem \ref{thm:main} (ii)]
  The main tool of the proof is the blowup technique by Fonseca and M\"uller. We
  have that
  \[
  D\nabla u= \nabla^2 u \L^2 + D^s\nabla u\ecke C_{\nabla u}+(\nabla u^+-\nabla
  u^-)\otimes \nu_{\nabla u}\H^1\ecke
  J_{\nabla u}\,.
  \]
In the sequel, we write $\nu\equiv \nu_{\nabla u}$.
  After choosing a subsequence, we may assume that $\lim_{\lambda\to
    \infty}\F_\lambda(u_\lambda)=\liminf_{\lambda\to\infty}\F_\lambda(u_\lambda)$,
  without increasing the $\liminf$.  Recalling the definition \eqref{eq:55} of
  $h_\lambda$, we have that $h_\lambda=Q_2f_\lambda\leq
  f_\lambda$, and hence there exists a Radon measure $\mu$ such that (after passing to a
  further subsequence)
  \[
  h_\lambda(\nabla u_\lambda,\nabla^2 u_\lambda)\L^2\to \mu\quad\text{ weakly *
    in the sense of measures.}
  \]
  Let $\zeta_1,\zeta_2,\zeta_3$ denote the Radon-Nikodym derivative of $\mu$
  with respect to $\L^2$, $|D^s\nabla u|\ecke C_{\nabla u}$ and $\H^1\ecke
  J_{\nabla u}$ respectively. By the non-negativity of $\mu$, we have
  \[
  \mu\geq \zeta_1 \L^2 +\zeta_2 |D^s\nabla u|\ecke C_{\nabla u}+ \zeta_3
  \H^1\ecke J_{\nabla u} \,.
  \]
  We will show that

\begin{align}
  \zeta_1(x)\geq &2\rho^0\left(S\left(\nabla u(x),\nabla^2
      u(x)\right)\right)\sqrt{1+|\nabla u|^2} \quad
  \text{ for }\L^2-\text{a.e.}\, x\in \Omega \label{eq:2}\\
  \zeta_2(x)\geq &2\rho^0\left(S\left(\nabla u(x),\frac{\d (D\nabla u)}{\d
        |D\nabla u|}(x)\right)\right)\sqrt{1+|\nabla u|^2} \quad \text{ for
  }|D^s\nabla u|-\text{a.e.}\, x\in C_{\nabla u}\label{eq:4}\\
  \zeta_3(x)\geq &2\arccos\left(\NN(\nabla u^+)\cdot \NN(\nabla
    u^-)\right)\sqrt{1+|\nu^\bot\cdot\nabla u|^2} \label{eq:3}\quad \text{ for
  }\H^1-\text{a.e.}\, x\in J_{\nabla u}\,.
\end{align}
This will prove the lower bound.

We will first prove \eqref{eq:2}.  

We write $v_\lambda=\nabla u_\lambda$.  
 For $\L^2$-almost every $x_0$, we may
choose a sequence $(\e_j)_{j\in\N}$ converging to zero, such that $\mu(\partial
Q(x_0,\e_j))=0$ for every $j\in \N$. When we write $\e\to 0$ in the sequel, we
actually mean the limit $j\to\infty$ for such a sequence. Also, we will drop the index $j$ in our notation. For every $\e$, we have
\[
\lim_{\lambda\to\infty}\int_{Q(x_0,\e)} h_\lambda(v_\lambda,\nabla
v_\lambda)\d x = \mu(Q(x_0,\e))\,.
\]
Moreover,
\[
\lim_{\e\to 0}\lim_{\lambda\to\infty} \frac{ Dv_\lambda(Q(x_0,\e))}{|D
  v|(Q(x_0,\e))}=\frac{\d D v}{\d |D v|}(x_0)\,.
\]
Note that by Theorem \ref{thm:RN} we have
\begin{equation}
  \begin{split}
    \zeta_1(x_0)=&\lim_{\e\to 0} \frac{\mu(Q({x_0,\e}))}{\L^2(Q(x_0,\e))}\\
    =&\lim_{\e\to 0}\lim_{\lambda\to
      \infty}\fint_{Q(x_0,\e)}h_\lambda(v_\lambda,\nabla v_\lambda)\d x\,.
  \end{split}\label{eq:12}
\end{equation}
We write $v_0:=v(x_0)$. For $\e$ small enough, define $w_{\lambda,\e}:Q\to \R^2$ by
\[
w_{\lambda,\e}(x)=\e^{-1}\left(v_\lambda(x_0+\e x)-v_0\right)\,.
\]
Furthermore let $w_0(x)=\nabla v_0\cdot x$. Using a change of variables and the
Cauchy-Schwarz inequality we have
\begin{equation}
  \begin{split}
    \lim_{\e\to 0}\lim_{\lambda\to\infty} \|w_{\lambda,\e}-w_0\|_{L^1(Q)}&=
    \lim_{\e\to 0} \frac{1}{\e}\int_{Q}|v(x_0+\e x)-v_0-\nabla v_0\cdot
    \e x|\d x\\
    &= \lim_{\e\to 0} \frac{1}{\e^3}\int_{Q(x_0,\e)}|v(x)-v_0-\nabla
    v_0\cdot
    (x-x_0)|\d x\\
    &\leq \lim_{\e\to 0}\frac{1}{\e^2}\left(\int_{Q(x_0,\e)}|v(x)-v_0-\nabla
      v_0\cdot
      (x-x_0)|^2\d x\right)^{1/2}\\
    &=0\,.
  \end{split}\label{eq:11}
\end{equation}
The last equality above holds for $\L^2$-almost every
$x_0$ by the remark below Theorem 3.83 in
\cite{MR1857292} (which is a slightly stronger variant of approximate differentiability). 
Also note that we have
\[
\fint_{Q(x_0,\e)}h_\lambda(v_\lambda,\nabla v_\lambda)\d
x=\int_{Q}h_\lambda(v_0+\e w_{\lambda,\e}(x),\nabla w_{\lambda,\e}(x))\d x\,.
\]

By \eqref{eq:12} and \eqref{eq:11}, it is possible to choose $\e\equiv \e(\lambda)$ depending on $\lambda$ such that
with $w_\lambda:=w_{\lambda,\e(\lambda)}$ we have that
\[
\begin{split}
  \lim_{\lambda\to \infty}\|w_\lambda-w_0\|&=0\\
  \lim_{\lambda\to \infty}\int_{Q}h_{\lambda}(v_0+\e w_\lambda(x),\nabla w_\lambda)\d
  x&=\zeta_1(x_0)\,.
\end{split}
\]

We need to modify $w_\lambda$ such that we get a suitable $L^\infty$-bound for fixing
the lower order terms. Namely, we are going to construct $\tilde w_\lambda$ such that
$\|\tilde w_\lambda\|_{L^\infty}\leq \e^{-1/2}$, and

\begin{equation}
  \liminf_{\lambda\to\infty}\int_Q h_{\lambda}(v_0+\e\tilde w_\lambda,\nabla \tilde w_\lambda)\d
  x\leq 
  \liminf_{\lambda\to\infty}\int_Q h_{\lambda}(v_0+\e w_\lambda,\nabla  w_\lambda)\d
  x\,.\label{eq:14}
\end{equation}

Let $K_\lambda$ be the largest integer smaller than $\log_2 \e^{-1/2}$. For
$k=1,\dots,K_\lambda$, we define
\[
E_k^\lambda:=\left\{x\in Q:\,2^{k-1}<|w_\lambda-w_0|\leq 2^k\right\}\,.
\]
Next we choose $k_\lambda\in\{1,\dots, K_\lambda\}$ such that with $E^\lambda:=E_{k_\lambda}^\lambda$, we have
\begin{equation}
  \int_{E^\lambda}\left(1+h_{\lambda}(v_0+\e w_\lambda,\nabla  w_\lambda)\right)\d
  x\leq K_\lambda^{-1}\int_Q \left(1+h_{\lambda}(v_0+\e w_\lambda,\nabla  w_\lambda)\right)\d
  x\label{eq:15}
\end{equation}
and we define $\varphi_\lambda:[0,\infty)\to\R$ such that
\[
\begin{split}
  \varphi_\lambda(x)=&1\text{ for } x\leq 2^{k_\lambda-1}\\
  \varphi_\lambda(x)=&0\text{ for } x\geq 2^{k_\lambda} \\
  |\varphi_\lambda'|\leq &2^{2-k_\lambda}\,.
\end{split}
\]
Now we set
\[
\tilde w_\lambda=w_0+\varphi_\lambda(|w_\lambda-w_0|)(w_\lambda-w_0)\,.
\]
Note that $\|\tilde w_\lambda-w_0\|_{L^\infty}\leq \e^{-1/2}$ by
construction, 
and $\tilde w_\lambda=w_0$ on
$\{x\in\Omega:|w_\lambda-w_0|\geq 2^{k_\lambda}\}$. We have that
\begin{equation}
  \begin{split}
    \liminf_{\lambda\to\infty}\int_Q h_{\lambda}(v_0+\e\tilde w_\lambda,\nabla \tilde
    w_\lambda)\d x&\leq \liminf_{\lambda\to\infty}\int_{\{|w_\lambda-w_0|\leq 2^{k_\lambda-1}\}}
    h_{\lambda}(v_0+\e w_\lambda,\nabla w_\lambda)\d
    x\\
    &\quad +  \int_{E^\lambda} h_{\lambda}(v_0+\e \tilde w_\lambda,\nabla \tilde w_\lambda)\d x\\
    &\quad + \int_{\{|w_\lambda-w_0|\geq 2^{k_\lambda}\}} h_{\lambda}(v_0+\e w_0,\nabla
    w_0)\d x\,.
  \end{split}
  \label{eq:35}
\end{equation}

We claim that

\begin{equation}\label{eq:24}
  \int_{E^\lambda} h_{\lambda}(v_0+\e \tilde w_\lambda,\nabla \tilde w_\lambda)\d x \leq
  C(v_0,\nabla w_0) \int_{E^\lambda} \left(1+h_{\lambda}(v_0+\e w_\lambda,\nabla w_\lambda)\right)\d x\,.
\end{equation}
Indeed, we have that on $E^\lambda$, $|\e w_\lambda-\e\tilde w_\lambda|\lesssim
\e^{1/2}$, and hence
\begin{equation}
  \begin{split}
    |\gm(v_0+\e w_\lambda)-\gm(v_0+\e\tilde w_\lambda)|&\leq C(v_0) \e^{1/2}\,,\\
    \left|\sqrt{1+|v_0+\e w_\lambda|^2}-\sqrt{1+|v_0+\e\tilde w_\lambda|^2}\right|&\leq
    C \e^{1/2}\,.
  \end{split}\label{eq:34}
\end{equation}
Also,
\[
\begin{split}
  |\nabla \tilde w_\lambda|&= \left|\nabla w_0+\varphi_\lambda' (w_\lambda-w_0)\otimes \nabla
    |w_\lambda-w_0|+\varphi_\lambda \nabla (w_\lambda-w_0)\right|\\
  &\leq C\left( |\nabla w_0|+ (2^{-k_\lambda}|w_\lambda-w_0|+1)|\nabla (w_\lambda-
    w_0)|\right)\\
  &\leq C(\nabla w_0) (|\nabla w_\lambda|+1)\,.
\end{split}
\]
Hence, we have that
\[
|S(v_0+\e\tilde w_\lambda, \nabla \tilde w_\lambda)|\leq C(v_0,\nabla w_0)\left(|S(v_0+\e w_\lambda,
  \nabla  w_\lambda)|+1\right)
\]
and it follows from Lemma \ref{lem:hladd} (i) that

\begin{equation}
  g_{\lambda}\left(S(v_0+\e\tilde w_\lambda, \nabla \tilde w_\lambda)\right)
  \leq 
  C (v_0,\nabla w_0)\left(g_{\lambda}(S(v_0+\e\tilde w_\lambda,
    \nabla  w_\lambda))+1\right)\,.\label{eq:25}
\end{equation}

Our claimed inequality \eqref{eq:24} now follows from \eqref{eq:34} and
\eqref{eq:25}.

Using \eqref{eq:15}, \eqref{eq:24} and the fact $K_\lambda\to\infty$ as $\lambda\to \infty$,
as well as $w_\lambda\to w_0$ in $L^1$, the right hand side of \eqref{eq:35} can be
estimated from above by
\[
\begin{split}
  \liminf_{\lambda\to\infty}\left(\int_{Q} h_{\lambda}(v_0+\e w_\lambda,\nabla w_\lambda)\d x
    + C(v_0,\nabla w_0)\L^2\left(\{|w_\lambda-w_0|\geq 2^{k_\lambda}\}\right)
    |\nabla w_0|\right)\\
  = \liminf_{\lambda\to\infty}\int_{Q} h_{\lambda}(v_0+\e w_\lambda,\nabla w_\lambda)\d x\,.
\end{split}
\]
This proves \eqref{eq:14}.

Now we have by Lemma \ref{lem:hbounds} and Lemma \ref{lem:qcL1conv},
\[
\begin{split}
  \liminf_{\lambda\to\infty}\int_Q h_{\lambda}(v_0+\e\tilde w_\lambda,\nabla \tilde
  w_\lambda)\d x&\geq
  \liminf_{\lambda\to\infty}\frac{1-C(v_0,\nabla w_0)\e^{1/2}}{1+C(v_0,\nabla w_0)\e^{1/2}}\int_Q
  h_{\lambda_\lambda}(v_0,\nabla
  \tilde w_\lambda)\d x\\
  &\geq 2 \sqrt{1+|v_0|^2}\rho^0(S(v_0,\nabla w_0))\,.
\end{split}
\]
This proves equation \eqref{eq:2}.

\medskip

Recall $G(v,p)=2\rho^0(S(v,p))\sqrt{1+|v|^2}$, and
$G_\infty(v,p)=2|S(v,p)|_\infty\sqrt{1+|v|^2}$. 
Let $v_\lambda\to v$ weakly * in BV.  We have by Lemma \ref{lem:hladd} (iii) that
\[
h_\lambda(v_\lambda,\nabla v_\lambda)\geq G_\infty(v_\lambda,\nabla v_\lambda)
\]
for all $\lambda$.  By Theorem \ref{thm:MFsingularpart}, we have that for
$|D^sv|\ecke C_v$ almost every point $x_0\in\Omega$,
\[
\zeta_2(x_0)\geq G_\infty\left(v(x_0),\frac{\d Dv}{\d|Dv|}(x_0)\right)
\]
which proves \eqref{eq:4}, since $\frac{\d Dv}{\d|Dv|}(x_0)$ is rank one, and
hence
\[
2\rho^0\left(S\left(v(x_0),\frac{\d
      Dv}{\d|Dv|}(x_0)\right)\right)=2\left|S\left(v(x_0),\frac{\d
      Dv}{\d|Dv|}(x_0)\right)\right|_\infty\,.
\]
By Lemma \ref{lem:MFvariant}, we have in a similar fashion for $|D^sv|\ecke J_v$
almost every $x_0\in \Omega$,
\[
\zeta_3(x_0)\geq \tilde K_{G_\infty}(v^+(x_0),v^-(x_0),\nu(x_0))\,.
\]
By Lemma \ref{lem:Kcalc}, it follows
\[
\zeta_3(x_0)\geq 2\sqrt{1+|\nu^\bot\cdot v(x_0)|^2}\arccos \NN(v^+(x_0))\cdot
\NN(v^-(x_0))\,.
\]
This proves \eqref{eq:3} and completes the proof of the lower bound.
\end{proof}

\subsection{Upper bound}
For the proof of the upper bound, we will need a modification of the well known
result in the calculus of variations that states that the relaxation of integral
functionals with suitable integrands that depend on $x,u,\nabla u$ is obtained
by the quasiconvexification of the integrand with respect to the gradient
variable. Here, we will need the analogous result for integrands that depend on
$\nabla u, \nabla^2 u$. 

\begin{proposition}
\label{prop:relax2}
  Let $1\leq p<\infty$, and let $f:\R^{m\times 2}\times\R^{m\times 2\times 2}\to\R$
  such that 
  \begin{equation}\label{eq:6}
\left.\begin{split}
0\leq &f(v,\xi)\leq C (1+|\xi|^p)\\
|f(v,\xi)-f(\tilde v,\xi)|\leq &C|v-\tilde v|\max (f(v,\xi),f(\tilde v,\xi)) \\
|Q_2f(v,\xi)-Q_2f(\tilde v,\xi)|\leq &C|v-\tilde v| \max (Q_2f(v,\xi),Q_2f(\tilde v,\xi))
\end{split}\right\}
\,\forall
v,\tilde v\in\R^{2\times n}, \xi\in
\R^{m\times 2\times 2}.
\end{equation}
Furthermore, let $\Omega\subset \R^2$ be open and bounded with smooth boundary, $u\in W^{2,p}(\Omega)$  and $\delta>0$. Then there
exists $w\in W^{2,p}(\Omega;\R^m)$ with
\[
\begin{split}
  \|u-w\|_{W^{1,p}(\Omega;\R^m)}&<\delta\\
  \int_\Omega f(\nabla w,\nabla^2 w)\d x&< \int_\Omega Q_2f(\nabla u,\nabla^2 u)\d x+\delta\,.
\end{split}
\]
\end{proposition}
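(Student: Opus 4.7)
The plan is to construct $w$ as $u$ plus a sum of localized perturbations $\varphi_i$, each realizing the infimum defining $Q_2 f$ at a nearby point. First, by density of $C^\infty(\bar\Omega;\R^m)$ in $W^{2,p}(\Omega;\R^m)$ together with the $p$-growth and Lipschitz-in-$v$ properties of $f$ and $Q_2 f$ in \eqref{eq:6}, the functionals $u \mapsto \int_\Omega f(\nabla u, \nabla^2 u)\d x$ and $u \mapsto \int_\Omega Q_2 f(\nabla u, \nabla^2 u)\d x$ are continuous with respect to $W^{2,p}$-convergence (the local Lipschitz continuity in $\xi$ of $Q_2 f$ follows from its quasiconvexity together with the growth bound, and convergence of the integrals then follows by dominated convergence after passing to a.e.-convergent subsequences), so I may assume $u\in C^\infty(\bar\Omega;\R^m)$ without loss of generality.

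For small $\rho > 0$, partition $\Omega$ into disjoint open cubes $\{Q_i\}_{i=1}^{N_\rho}$ of side $\rho$ with centres $x_i$, leaving a boundary strip of measure $O(\rho)$ that contributes negligibly to the estimate. Set $v_i := \nabla u(x_i)$ and $\xi_i := \nabla^2 u(x_i)$. By the definition of the 2-quasiconvexification, there exist $\psi_i \in W^{2,\infty}_0([-1/2, 1/2]^2;\R^m)$ with
\[
\int_{[-1/2,1/2]^2} f(v_i, \xi_i + \nabla^2\psi_i(y))\d y \leq Q_2 f(v_i, \xi_i) + \frac{\delta}{2|\Omega|}.
\]
Rescaling, let $\varphi_i(x) := \rho^2 \psi_i((x - x_i)/\rho)$, so that $\varphi_i \in W^{2,\infty}_0(Q_i;\R^m)$, with $\nabla^2\varphi_i$ bounded pointwise by $\|\nabla^2\psi_i\|_{L^\infty}$ and $\|\nabla\varphi_i\|_{L^\infty} \leq \rho\|\nabla\psi_i\|_{L^\infty}$. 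Define $w := u + \sum_i \varphi_i$; the vanishing of $\varphi_i$ and its gradient on $\partial Q_i$ ensures $w \in W^{2,p}(\Omega;\R^m)$, and provided the $\psi_i$ are chosen with uniformly bounded $W^{2,\infty}$-norms (say, $\leq K$), one has $\|w - u\|_{W^{1,p}(\Omega)} \leq C K \rho < \delta$ for $\rho$ sufficiently small.

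For the energy estimate, change variables on each $Q_i$ to write
\[
\int_{Q_i} f(\nabla w, \nabla^2 w)\d x = \rho^2 \int_Q f\bigl(\nabla u(x_i + \rho y) + \rho\nabla\psi_i(y),\, \nabla^2 u(x_i + \rho y) + \nabla^2\psi_i(y)\bigr)\d y.
\]
The Lipschitz-in-$v$ property of $f$ in \eqref{eq:6}, combined with the bound $|\nabla u(x_i + \rho y) + \rho\nabla\psi_i(y) - v_i| \leq \omega(\rho) + CK\rho$ (where $\omega$ is the modulus of continuity of $\nabla u$), multiplicatively replaces the first argument by $v_i$ up to a factor $1 + o_\rho(1)$. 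For the second argument, uniform continuity of $\nabla^2 u$ on $\bar\Omega$ yields $\nabla^2 u(x_i + \rho y) \to \xi_i$ uniformly in $y \in Q$; together with the $p$-growth bound and continuity of $f(v_i, \cdot)$ on bounded sets, this gives
\[
\int_{Q_i} f(\nabla w, \nabla^2 w)\d x \leq (1 + o_\rho(1))\, \rho^2 \bigl(Q_2 f(v_i, \xi_i) + \delta/(2|\Omega|)\bigr).
\]
Summing over $i$ and applying the Lipschitz-in-$v$ property of $Q_2 f$ in \eqref{eq:6} to replace $Q_2 f(v_i, \xi_i)$ by $Q_2 f(\nabla u(x), \nabla^2 u(x))$ up to an $o_\rho(1)$ error produces the desired bound.

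The main obstacle is ensuring that the $\psi_i$ can be chosen with $W^{2,\infty}$-norms bounded uniformly in $i$ and in $\rho$, so that the $W^{1,p}$-control on $w - u$ actually holds in the limit $\rho\to 0$. For fixed $\rho$ this is automatic (finitely many cubes), but uniformity as $\rho \to 0$ requires either a measurable-selection argument providing $\psi(v,\xi)\in W^{2,\infty}_0$ depending continuously on $(v,\xi)$ over the compact range of $(\nabla u, \nabla^2 u)$, or equivalently a truncation of an approximate minimizing sequence that controls $\|\nabla^2\psi\|_\infty$ at the cost of a small increase of the integral, absorbed via the $p$-growth of $f$.
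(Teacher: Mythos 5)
There is a genuine gap at the heart of your cube-wise estimate: to replace $\nabla^2 u(x_i+\rho y)$ by the frozen value $\xi_i$ you invoke ``continuity of $f(v_i,\cdot)$ on bounded sets'', but \eqref{eq:6} contains no continuity assumption in the second argument, and in the application the proposition is made for, $f=f_\lambda$, the integrand is genuinely discontinuous at $\xi$ with $S(v,\xi)=0$. This is not a removable technicality: the near-optimal correctors for $Q_2f_\lambda$ are laminate-type and make the frozen Hessian $\xi_i+\nabla^2\psi_i$ vanish on a set of positive measure, so the perturbation $\nabla^2u(x_i+\rho y)-\xi_i$, however small, can switch $F_\lambda$ from $0$ to $\lambda+|\cdot|^2$ there and produce an error of order $\sqrt\lambda$ times a fixed measure, which does not vanish as $\rho\to0$. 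The paper's proof is organized precisely to avoid any continuity in $\xi$: after mollifying, it approximates $u_\e$ by a function $w_\e$ that is \emph{piecewise quadratic} on a union of cubes (Lemma \ref{lem:approx}), so that on each cube the Hessian of the background is exactly constant and the only perturbation to control is in the first argument, where the Lipschitz-in-$v$ hypothesis of \eqref{eq:6} applies. (Your preliminary claim that $u\mapsto\int_\Omega f(\nabla u,\nabla^2 u)\,\d x$ is $W^{2,p}$-continuous suffers from the same problem, though it is not actually needed; only the continuity of the $Q_2f$-functional is used in the reduction.)

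The second issue is the one you flag yourself but do not resolve: with correctors rescaled as $\varphi_i(x)=\rho^2\psi_i((x-x_i)/\rho)$ on a grid of mesh $\rho\to0$, the $W^{1,p}$-closeness and the multiplicative $1+o_\rho(1)$ factor require $\sup_i\|\psi_i\|_{W^{2,\infty}}$ to stay bounded as $\rho\to0$, which needs a measurable/continuous selection of near-minimizers over the range of $(\nabla u,\nabla^2u)$ — a nontrivial step. The paper sidesteps this entirely: the cube decomposition is fixed (finitely many cubes for each approximation step), and on each cube the chosen corrector $\xi$ is replaced by its fine-period rescaling $\xi_M(x)=M^{-2}\tilde\xi(M(x-x_0))$, which by periodicity leaves $\int_{\tilde Q} f(\nabla w_\e(x_0),\nabla^2w_\e(x_0)+\nabla^2\xi_M)\,\d x$ unchanged, keeps $\|\nabla^2\xi_M\|_{L^\infty}$ fixed, and makes $\|\xi_M\|_{W^{1,\infty}}<\Delta$ for $M$ large, cube by cube; no uniform bound over a $\rho\to0$ family is ever needed. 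Incorporating these two devices (piecewise-quadratic background plus periodic fine-scale rescaling of the correctors) is what turns your outline into a complete proof.
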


For the proof of the proposition, we are going to use 
\begin{lemma}[Lemma A.3 in \cite{olbermann2017michell}]
\label{lem:approx}
Let $\Omega\subset\R^2$ be open and bounded with smooth boundary, and let $p\in
[1,\infty)$.  Furthermore let $u\in C^3(\Omega)$ and $\delta>0$. Then there exists $w\in 
  W^{2,\infty}(\Omega)$ and $\Omega_w\subset\Omega$ such that $\Omega_w$ is
  the union of mutually disjoint closed cubes, $w$ is
  piecewise a polynomial of degree $2$ on $\Omega_w$, and furthermore
\[
\begin{split}
  \|u-w\|_{W^{2,p}(\Omega)}&<\delta\,,\\
\|w\|_{W^{2,\infty}}&\lesssim \|u\|_{W^{2,\infty}}\\
\int_{\Omega\setminus\Omega_w}(1+|\nabla^2u|^p+|\nabla^2w|^p)\d x&<\delta\,.
\end{split}
\]
\end{lemma}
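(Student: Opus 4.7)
The plan is to combine the piecewise-quadratic approximation of Lemma~\ref{lem:approx} with a cube-by-cube application of the definition of $Q_2 f$, using the multiplicative estimate \eqref{eq:6} to absorb the $v$-dependence into a factor arbitrarily close to $1$. First I would reduce to $u\in C^\infty(\Omega)$ by mollification (standard density of $C^\infty$ in $W^{2,p}$, combined with the continuity properties \eqref{eq:6} of both $f$ and $Q_2f$, which together with the $p$-growth and the rank-one (hence local Lipschitz) character of quasiconvex functions give continuity of $\int Q_2 f(\nabla u,\nabla^2 u)\,\mathrm dx$ in $W^{2,p}$). Then I apply Lemma~\ref{lem:approx} to this smooth $u$ to produce $\tilde u\in W^{2,\infty}(\Omega)$, piecewise quadratic on a family of disjoint closed cubes $\{Q_i\}$ making up $\Omega_{\tilde u}$, with $\|u-\tilde u\|_{W^{2,p}}<\delta'$ and $\int_{\Omega\setminus\Omega_{\tilde u}}(1+|\nabla^2 u|^p+|\nabla^2\tilde u|^p)\,\mathrm dx<\delta'$.

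On each $Q_i$ (with centre $x_i$), $\nabla\tilde u(x)=v_i+\xi_i(x-x_i)$ and $\nabla^2\tilde u\equiv\xi_i$ are affine/constant. I further subdivide $Q_i$ into disjoint subcubes $Q_{ij}$ of side length $s$ with centres $x_{ij}$, and put $v_{ij}:=v_i+\xi_i(x_{ij}-x_i)$. By the definition of $Q_2 f$, for each such $(v_{ij},\xi_i)$ there exists $\varphi_{ij}^0\in W^{2,\infty}_0(Q)$ with
\[
\int_{Q} f\bigl(v_{ij},\,\xi_i+\nabla^2\varphi_{ij}^0\bigr)\,\mathrm dy \le Q_2 f(v_{ij},\xi_i)+\delta'.
\]
I rescale by setting $\varphi_{ij}(x):=s^2\varphi_{ij}^0((x-x_{ij})/s)$, noting that $\nabla^2\varphi_{ij}$ keeps its full amplitude while $\|\nabla\varphi_{ij}\|_\infty\le s\,\|\nabla\varphi_{ij}^0\|_\infty$. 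I then define
\[
w:=\tilde u+\sum_{i,j}\varphi_{ij},
\]
extended by $\tilde u$ outside $\Omega_{\tilde u}$.

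To estimate $\int f(\nabla w,\nabla^2 w)\,\mathrm dx$, note that on $Q_{ij}$,
\[
|\nabla w(x)-v_{ij}|\le |\xi_i|\,|x-x_{ij}|+|\nabla\varphi_{ij}(x)|\le s(|\xi_i|+\|\nabla\varphi_{ij}^0\|_\infty),
\]
which can be made smaller than any prescribed $\eta$ by taking $s$ small enough (depending on $\xi_i$ and $\varphi_{ij}^0$). The multiplicative estimate in \eqref{eq:6} then gives $f(\nabla w,\nabla^2 w)\le (1+C\eta)\,f(v_{ij},\nabla^2 w)$ pointwise on $Q_{ij}$; using scaling of the test function and the defining inequality above yields
\[
\int_{Q_{ij}} f(\nabla w,\nabla^2 w)\,\mathrm dx \le (1+C\eta)\,s^{2}\bigl(Q_2f(v_{ij},\xi_i)+\delta'\bigr).
\]
Summing over $j$ converts the Riemann sum $\sum_j s^2 Q_2 f(v_{ij},\xi_i)$ into $\int_{Q_i}Q_2f(\nabla\tilde u,\nabla^2\tilde u)\,\mathrm dx$ up to an error that vanishes as $s\to 0$ (using continuity of $Q_2f(\cdot,\xi_i)$ from \eqref{eq:6}); summing over $i$ and using Lemma~\ref{lem:approx} to dispose of $\Omega\setminus\Omega_{\tilde u}$ (where $f(\nabla\tilde u,\nabla^2\tilde u)\le C(1+|\nabla^2\tilde u|^p)$ is small by the last conclusion of that lemma) gives
\[
\int_\Omega f(\nabla w,\nabla^2 w)\,\mathrm dx \le \int_\Omega Q_2 f(\nabla\tilde u,\nabla^2\tilde u)\,\mathrm dx+\delta/2 \le \int_\Omega Q_2 f(\nabla u,\nabla^2 u)\,\mathrm dx+\delta,
\]
where the last step uses the $W^{2,p}$-continuity of the $Q_2 f$-integral. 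The $W^{1,p}$ closeness $\|u-w\|_{W^{1,p}}<\delta$ follows from $\|u-\tilde u\|_{W^{2,p}}<\delta'$ together with $\|\varphi_{ij}\|_{W^{1,p}(Q_{ij})}\le C s\,\|\varphi_{ij}^0\|_{W^{1,\infty}}\,s^{2/p}$ and summing over $i,j$ (which is small since the $\varphi_{ij}$ are disjointly supported).

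The main obstacle is coordinating the small parameters: the scaling parameter $s$ must be chosen \emph{after} $\varphi_{ij}^0$ (since $\|\nabla\varphi_{ij}^0\|_\infty$ can depend badly on the data), and the approximation parameter $\delta'$ in Lemma~\ref{lem:approx} must beat the multiplicative overhead produced by the $v$-perturbation. This is managed by fixing $\tilde u$ first, then for each of the finitely many cubes $Q_i$ choosing $\varphi_{ij}^0$, and only then choosing $s$ small enough that the resulting $\eta$ is negligible relative to $\delta$; the multiplicative structure of \eqref{eq:6} is essential here, because the additive growth $f\le C(1+|\xi|^p)$ alone would leak unbounded constants depending on $\xi_i$.
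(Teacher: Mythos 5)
There is a fundamental mismatch here: the statement you were asked to prove is Lemma~\ref{lem:approx} itself, i.e.\ the construction, for a given $u\in C^3(\Omega)$, of a piecewise quadratic $w\in W^{2,\infty}(\Omega)$ on a union of disjoint closed cubes $\Omega_w$ with $\|u-w\|_{W^{2,p}(\Omega)}<\delta$, $\|w\|_{W^{2,\infty}}\lesssim\|u\|_{W^{2,\infty}}$, and $\int_{\Omega\setminus\Omega_w}(1+|\nabla^2u|^p+|\nabla^2w|^p)\,\d x<\delta$. Your proposal never constructs such a $w$: its very first step is to \emph{invoke} Lemma~\ref{lem:approx} to produce the piecewise quadratic approximant $\tilde u$, so with respect to the target statement the argument is circular. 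What you then carry out --- subdividing each cube, picking near-optimal test functions $\varphi_{ij}^0$ from the definition of $Q_2f$, rescaling them, and using the multiplicative estimate \eqref{eq:6} to control the drift of the lower-order variable --- is in substance the proof of Proposition~\ref{prop:relax2} (the relaxation statement), not of the approximation lemma. Accordingly, the conclusion you reach, $\int_\Omega f(\nabla w,\nabla^2w)\,\d x\le\int_\Omega Q_2f(\nabla u,\nabla^2u)\,\d x+\delta$, is not among the assertions of Lemma~\ref{lem:approx}, while the assertions that actually need proof (the $W^{2,\infty}$ bound by $\|u\|_{W^{2,\infty}}$ and the smallness of the integral over $\Omega\setminus\Omega_w$) are nowhere addressed except by citing the lemma itself. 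Note also that the paper does not reprove this lemma; it imports it from \cite{olbermann2017michell} and only remarks which features are clear from the proof given there.

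If you want to prove the lemma itself, the argument is of a different nature and purely constructive: cover a large compact subset of $\Omega$ by a fine grid of closed cubes of side $r$, shrink each cube slightly to obtain the disjoint closed cubes forming $\Omega_w$, define $w$ on each shrunken cube as the second-order Taylor polynomial of $u$ at the cube center, and interpolate between neighboring polynomials (and with $u$ near $\partial\Omega$) in the thin transition layers using cutoff functions. Since $u\in C^3$, the Taylor remainder gives $\|u-w\|_{C^2}$ small on each small cube, the cutoff construction keeps $\|w\|_{W^{2,\infty}}\lesssim\|u\|_{W^{2,\infty}}$ because the interpolation error times the inverse square of the layer width stays controlled by $\|\nabla^2 u\|_{L^\infty}$ (plus lower-order Taylor errors), and choosing the total measure of the transition layers and of the boundary strip small yields both $\|u-w\|_{W^{2,p}(\Omega)}<\delta$ and $\int_{\Omega\setminus\Omega_w}(1+|\nabla^2u|^p+|\nabla^2w|^p)\,\d x<\delta$. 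None of the quasiconvexification machinery, the functions $\varphi_{ij}^0$, or the hypothesis \eqref{eq:6} should appear in this proof; those belong to the later step where the lemma is applied.
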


\begin{remark}
We remark that the inequality $\|w\|_{W^{2,\infty}}\lesssim \|u\|_{W^{2,\infty}}$ does not appear in the statement of Lemma A.3 in \cite{olbermann2017michell}, but is clear from the proof there. Also, the assumptions on the domain $\Omega$ that we make here are sufficient; the assumption of simple connectedness made in \cite{olbermann2017michell} does not play a role in the proof of that lemma. Finally, we note in passing that neither there nor here the assumption that the dimension of the domain is $n=2$ is of any relevance; the respective statements hold true for general $n$ just as well.
\end{remark}

\begin{proof}[Proof of Proposition \ref{prop:relax2}]
First we recall the well known fact that rank-one convex functions are locally
Lipschitz continuous (see e.g. \cite{MR2361288}). 
This holds true in particular
for $Q_2f(v,\cdot)$ for any $v$, and hence the assumption \eqref{eq:6}
implies that $Q_2f$ is locally Lipschitz continuous in both arguments.
More precisely, with the assumed
growth properties for $f$, we  have $ Q_2f(v,\xi)\leq C(1+|\xi|^p)$ and
hence 
\begin{equation}
|Q_2(v,\xi)-Q_2(v,\tilde \xi)|\leq C|\xi-\tilde \xi|\left(1+|\xi|^{p-1}+|\tilde
  \xi|^{p-1}\right) \quad\forall \xi,\tilde \xi\in \R^{m\times n^2}
\label{eq:9}
\end{equation}
where $C$ is some constant that is independent of $v,\xi,\tilde \xi$ (see
Proposition 2.32 in 
\cite{MR2361288}).

We set $u_\e:=\eta_\e* u$ and claim that
\begin{equation}
\label{eq:7}
\lim_{\e\to 0}\int_\Omega Q_2f(\nabla u_\e,\nabla^2 u_\e)\d x=\int_\Omega
Q_2f(\nabla u,\nabla^2 u)\d x\,.
\end{equation}
Indeed,
we have that  $u_\e\to u$ in $W^{2,p}$, and hence by \eqref{eq:9} and the
assumption \eqref{eq:6}, we have
\begin{equation}
\label{eq:42}
\begin{split}
  \int_\Omega &|Q_2f(\nabla u_\e,\nabla^2 u_\e)- Q_2f(\nabla u,\nabla^2 u)|\d
  x\\
&\leq \int_\Omega |Q_2f(\nabla u_\e,\nabla^2 u_\e)-
  Q_2f(\nabla u_\e,\nabla^2 u)|
   + |Q_2f(\nabla u_\e,\nabla^2 u)-
  Q_2f(\nabla u,\nabla^2 u)|\d x\\
  &\leq C\int_\Omega |\nabla^2 u_\e-\nabla^2 u| (1+|\nabla^2
  u|^{p-1}+|\nabla^2 u_\e|^{p-1})+\max(|\nabla u_\e-\nabla u|,1) (1+|\nabla^2 u|^p)
  \d x
\end{split}
\end{equation}
For $\e\to 0$, the integral on the right hand side  converges to 0, proving the claim \eqref{eq:7}.

Let $\Delta>0$. We choose $u_\e$ such that $\|u-u_\e\|_{W^{2,p}}<\Delta$.
By Lemma \ref{lem:approx}, there exists $w_\e\in W^{2,\infty}$ and a union of
disjoint closed cubes $\Omega_w\subset \Omega$ such that each component of  $w_\e$ is a polynomial
of degree 2 on each of the cubes, and 
\[
\begin{split}
  \|w_\e-u_\e\|_{W^{2,p}}&<\Delta\\
  |\Omega\setminus\Omega_w|(1+\|w_\e\|_{W^{2,\infty}}^p)&<\Delta\,.
\end{split}
\]
By the same kind of estimate as in \eqref{eq:42}, we obtain that additionally, we may choose $w_\e$, $\Omega_w$
such that
\begin{equation}
\int_\Omega Q_2f(\nabla w_\e,\nabla^2 w_\e)\d x<\int_\Omega Q_2f(\nabla
u_\e,\nabla^2 u_\e)\d x+\Delta\,.\label{eq:8}
\end{equation}
Moreover, we may choose the cubes to be so small that on each cube $\tilde
Q$ with center $x_0$ in the collection,
\[
\sup_{x\in\tilde Q}|\nabla w_\e(x)-\nabla w_\e(x_0)|<\Delta\,.
\]

Let $\tilde Q$ be a cube where the components of $w_\e$ are quadratic polynomials, with midpoint
$x_0$ and sidelength $r$. Choose $\xi\in W_0^{2,\infty}(\tilde Q)$ such that
\[
\int_{\tilde Q} f(\nabla
w_\e(x_0),\nabla^2 w_\e(x_0)+\nabla^2\xi)\d x\leq \mathrm{vol}(\tilde Q) Q_2f(\nabla w_\e(x_0),\nabla^2 w_\e(x_0)) +\frac\Delta N\,,
\]
where $N$ is the total number of cubes. 
Let us write $\tilde \xi(x)=\xi(x_0+rx)$ for $x\in [-1/2,1/2]^2$, and define $\tilde \xi$ on $\R^2$ by
1-periodic extension.
For $M\in\N$, let
\[
\xi_M(x)=M^{-2} \tilde \xi\left(M(x-x_0)\right)\,.
\]
We have that  $\|\xi\|_{W^{1,\infty}}\to 0$ for $M\to \infty$, $\|\nabla^2\xi\|_{L^\infty}=\|\nabla^2\xi_M\|_{L^\infty}$,  and 
\[
\int_{\tilde Q} f(\nabla
w_\e(x_0),\nabla^2 w_\e(x_0)+\nabla^2\xi)\d x=\int_{\tilde Q} f(\nabla
w_\e(x_0),\nabla^2 w_\e(x_0)+\nabla^2\xi_M)\d x\,.
\]
We choose  $M$
so large that $\|\nabla \xi_M\|_{L^\infty}<\Delta$.
This implies 
\begin{equation}
  \label{eq:45}
\|\nabla w_\e+\nabla \xi_M-\nabla w_\e(x_0)\|_{L^\infty(\tilde Q)}<2\Delta\,.
\end{equation}
Using the local Lipschitz continuity in the first argument of $f$ as assumed in \eqref{eq:6},
\[
\int_{\tilde Q} f(\nabla
w_\e+\nabla \xi_M,\nabla^2 w_\e(x_0)+\nabla^2\xi_M)\d x<
\frac{1+C\Delta}{1-C\Delta}\int_{\tilde Q} f(\nabla
w_\e(x_0),\nabla^2 w_\e(x_0)+\nabla^2\xi_M)\d x\,.
\]
We repeat the same for all cubes $\tilde Q$ in $\Omega_w$, obtaining a corrector
function $\xi_{\tilde Q}\in W^{2,\infty}_0(\tilde Q)$ in each of them.
We set $\bar w= w_\e+\sum_{\tilde Q} \xi_{\tilde Q}$. Denoting by $x_{\tilde Q}$
the center of the cube $\tilde Q$, we have
\[
\begin{split}
  \int_{\Omega} f(\nabla\bar w ,\nabla^2 \bar w)\d x&\leq
\sum_{\tilde Q}\int_{\tilde Q}f(\nabla \bar w,\nabla^2\bar w)\d
x
+\int_{\Omega\setminus\Omega_w}f(\nabla \bar w,\nabla^2\bar w)\d x\\
&\leq \sum_{\tilde Q}\frac{1+C\Delta}{1-C\Delta}\int_{\tilde Q}f(\nabla 
w_\e(x_{\tilde Q}),\nabla^2\bar w)\d x+C \int_{\Omega\setminus\Omega_w}(1+|\nabla^2\bar
w|^p)\d x\\
&\leq  \sum_{\tilde Q}\frac{1+C\Delta}{1-C\Delta}\left(\int_{\tilde Q}Q_2f(\nabla 
w_\e
(x_{\tilde Q}),\nabla^2 w_\e)\d x+\frac{\Delta}{N}\right)+C\Delta\\
&\leq \left(\frac{1+C\Delta}{1-C\Delta}\right)^2 \int_\Omega Q_2f(\nabla
w_\e,\nabla^2 w_\e)\d x+C\Delta\,.
\end{split}
\]
Here we used again \eqref{eq:45} in combination with the assumption \eqref{eq:6} to obtain the
last inequality.
By $\|u-w_\e\|_{W^{2,p}}< 2\Delta$ and \eqref{eq:7}, this last estimate proves the claim
by choosing $\Delta$ small enough.
\end{proof}

\begin{proof}[Proof of Theorem \ref{thm:main} (iii)]
Just as for the lower  bound, we will use the blow-up method for the proof of
the upper bound, in combination with a suitable mollification.
We assume that we are given a sequence $\lambda_j\to \infty$, and we will prove
that for any subsequence, there exists a further subsequence fulfilling the
upper bound. We omit the index $j$ from our notation and write $\lambda\to
  \infty$ for $j\to \infty$.

\medskip

{\em Step 1}:
Let $\hat \Omega$ be
some neighborhood of $\overline\Omega$ in $\R^2$.
By Theorem \ref{thm:traceop},  there exists a function $v\in
W^{2,1}(\hat\Omega\setminus \overline \Omega)$  such that the traces $\gamma_0(v),\gamma_1(v)$
on $\partial\Omega$
with respect to $\hat\Omega\setminus\overline\Omega$
are identical with $\gamma_0(u)$, $\gamma_1(u)$ (up to the appropriate sign
change for $\gamma_1$). By Theorem 3.84 in \cite{MR1857292}, the function
$w:=u\chi_\Omega+v\chi_{\hat\Omega\setminus\Omega}$ is an element of $BH(\hat\Omega)$, with
$|D\nabla w|(\partial\Omega)=0$.

\medskip

For $\e>0$ small enough, we have $\{x\in \R^2:\dist(x,\Omega)<\e\}\subset \hat\Omega$,
and we may set $u_\e(x)=(\eta_\e * w)(x)$ for $x\in \Omega$. Here
$\e=\e(\lambda,u)$ is chosen such that 
\begin{equation}
    \|S(\nabla u_\e,\nabla^2 u_\e)\|_{L^\infty}\leq
    \|\nabla^2 u_\e\|_{L^\infty}< \sqrt{\lambda}/2\,,\label{eq:36}
\end{equation}
and $\e(\lambda,u)\to 0$ as $\lambda\to
\infty$.
Such a choice of $\e$ is possible by the standard estimate
\[
  \|\nabla^2 u_\e\|_{L^\infty(\Omega)}\leq C\e^{-2}|D\nabla w|(\hat\Omega)\,
\]
where $C=\sup |\eta|$; the first inequality in \eqref{eq:36} being
valid by Lemma \ref{lem:basicSestimate}.
Writing $u_\lambda=u_{\e(\lambda,u)}$, $v_\lambda=\nabla u_\lambda$, $v=\nabla u$ and recalling the definition
\eqref{eq:55} of
$h_\lambda$,  we have that
\begin{equation}
\begin{split}
  h_\lambda(v_\lambda,\nabla v_\lambda)&\leq 2\sqrt{1+|v_\lambda|^2}\rho^0(S(v_\lambda,\nabla v_\lambda))\\
  &\leq 2 |\nabla v_\lambda|\,.
\end{split}\label{eq:16}
\end{equation}
Hence we have that after
passing to a subsequence, there exists a measure $\mu$ such that
\begin{equation}
\label{eq:56}
\left.\begin{split}h_\lambda(v_\lambda,\nabla v_\lambda)\L^2\ecke \Omega
  &\to\mu\\
\nabla v_\lambda\L^2&\to Dv\\
|\nabla v_\lambda|\L^2&\to |Dv|\end{split}\right\}
\text{ weakly * in the sense of measures}
\end{equation}
Additionally, it follows from  $|D\nabla w|(\partial \Omega)=0$ and
\eqref{eq:16} that
${\mu(\partial\Omega)=0}$, and hence \[
  \lim_{\lambda\to\infty}\int_\Omega
  h_\lambda(v_\lambda,\nabla v_\lambda)\d x= \mu(\Omega)\,.
  \]

\medskip

{\em Step 2.} 
For every continuous non-negative function $\varphi\in C_0(\Omega)$, we have
\[
\begin{split}
  \int_\Omega\varphi\,\d\mu&=
  \lim_{\lambda\to\infty}\int_\Omega \varphi\,h_\lambda(
  v_\lambda,\nabla
  v_\lambda)\d x\\
&\leq  2\lim_{\lambda\to\infty}\int_\Omega\varphi |\nabla v_\lambda|\,\d x\\
&= 2\int_\Omega\varphi \,\d|Dv|\,.
\end{split}
\]
Hence, $\mu$ is absolutely continuous with respect to $|Dv|$, and
the measure $\mu$ can be decomposed into mutually singular measures,
\[
\mu=\zeta_1\L^2+\zeta_2 |D^sv|\ecke C_{v}+\zeta_3 \H^1 \ecke
J_{v}\,\,.
\]
We will prove
\begin{align}
  \zeta_1(x)\leq &2\rho^0\left(S\left(v(x),\nabla v(x)\right)\right)\sqrt{1+|v|^2} \quad
  \text{ for }\L^2-\text{a.e.}\, x\in \Omega \label{eq:17}\\
  \zeta_2(x)\leq &2\rho^0\left(S\left(v(x),\frac{\d Dv}{\d
        |Dv|}(x)\right)\right)\sqrt{1+|v|^2} \quad \text{ for
  }|D^sv|-\text{a.e.}\, x\in C_{v}\label{eq:19}\\
\zeta_3(x)\leq &2\arccos\left(\NN(v^+)\cdot \NN(
    v^-)\right)\sqrt{1+|\nu^\bot\cdot v|^2} \label{eq:18}\quad
  \text{ for }\H^1-\text{a.e.}\, x\in J_{v}\,.
\end{align}
Once we have proved these inequalities, we have proved
\[
\limsup_{\lambda\to \infty}\int_\Omega h_\lambda(v_\lambda,\nabla v_\lambda)\d x\leq\F(u)\,.
\]
The upper bound then follows by $Q_2 f_\lambda=h_\lambda$ and Proposition
\ref{prop:relax2}. Indeed, the assumptions of the proposition (with $p=2$) are
fulfilled for $f_\lambda$ by
Lemma \ref{lem:hbounds}.

\medskip

{\em Step 3.} To prove \eqref{eq:17},
we use that at $\L^2$-almost every $x_0$,  $v=\nabla u$ is approximately differentiable, i.e., 
\begin{equation}
\lim_{r\to 0}\frac{1}{r}\fint_{Q(x_0,r)}|v(x)-v(x_0)-\nabla v(x_0)\cdot(x-x_0)|\d
  x=0\,.\label{eq:47}
\end{equation}
In particular, we may assume that $x_0$ is a Lebesgue point of $v$,
\[
  \lim_{r\to 0}\fint |v(x)-v(x_0)|\d x=0\,.
  \]
We define
\[
  \begin{split}
    v^{(\rho)}(x)&=\rho^{-1}(v(x_0+\rho x)-v(x_0))\\
    V(x)&=\nabla v(x_0)\cdot(x-x_0)
  \end{split}
\]
and have by \eqref{eq:47} that
\[
  v^{(\rho)}\to V \text{ in }L^1(Q(0,2))
\]
 and by Theorem \ref{thm:RN} that
\[    
\left.\begin{split}
      Dv^{(\rho)}&\to \nabla v(x_0)\L^2\\
      |Dv^{(\rho)}|&\to |\nabla v(x_0)|\L^2\end{split}\right\}
  \text{ weakly * in the sense of measures.}
\]
Now we choose $\rho(\lambda)$ with $\e(\lambda,u)<\rho(\lambda)/2$.
In the sequel, we write $\rho\equiv \rho(\lambda)$. We set
\[
  v^{(\rho)}_\lambda(x)=\rho^{-1}(v_\lambda(x_0+\rho x)-v(x_0))
\]
and have
\[
  v^{(\rho)}_\lambda\to V \text{ in }L^1(Q(0,2))
\]
 and 
  \begin{equation}
\left.\begin{split}
      \nabla v^{(\rho)}_\lambda \L^2&\to \nabla v(x_0)\L^2\\
      |\nabla v^{(\rho)}_\lambda|\L^2&\to |\nabla v(x_0)|\L^2\end{split}\right\}
  \text{ weakly * in the sense of measures.}\label{eq:57}
\end{equation}
Moreover,
\begin{equation}
  \label{eq:58}
  \begin{split}
    \sup_{x\in Q(x_0,\rho)} |v_\lambda(x)-v(x_0)|
    &\leq \sup_{ Q(x_0,\rho)} |\eta_{\e(\lambda,u)}*(v-v(x_0))|\\
    &\leq \|v_\lambda-v(x_0)\|_{L^1(Q(x_0,2\rho))}\\
      &\to 0\text{ for }\lambda\to \infty\,.
    \end{split}
\end{equation}

By the Radon-Nikodym Theorem,
\[
  \zeta_1(x_0)=\lim_{\lambda\to \infty} \fint_{Q(x_0,\rho)}
  h_\lambda(v_\lambda,\nabla v_\lambda)\d x\,.
\]

We recall that $G:\R^2\times \Rsym\to \R$ is defined by
\[G(\xi,p)=2\rho^0(S(\xi,p))\sqrt{1+|\xi|^2}\,.
\]
By Lemma \ref{lem:hbounds} we have that
  \begin{equation}
  |G(\xi,p)-G(\tilde \xi,p)|\lesssim |\xi-\tilde \xi| \max
  (G(\xi,p),G(\tilde\xi,p))\,.\label{eq:59}
  \end{equation}

By combining  \eqref{eq:36} with the definition \eqref{eq:55} of $h_\lambda$,
 we have that $h_\lambda( v_\lambda,\nabla v_\lambda)\leq G(
 v_\lambda,\nabla v_\lambda)$, and hence

\[
  \zeta_1(x_0)\leq \limsup_{\lambda\to \infty} \fint_{Q(x_0,\rho)}
  G(v_\lambda,\nabla v_\lambda)\d x\,.
\]

By \eqref{eq:58} and \eqref{eq:59}, we obtain

\[
  \begin{split}
    \zeta_1(x_0)&\leq \limsup_{\lambda\to\infty}\fint_{Q(x_0,\rho)}
    G(v(x_0),\nabla v_\lambda)\d  x \\
    &=\limsup_{\lambda\to\infty}\int_{Q(0,1)} G(v(x_0),\nabla
    v_\lambda^{(\rho)})\d x\,.
  \end{split}
  \]
  By Theorem \ref{thm:delladio} and \eqref{eq:57}, we finally get
  \[
    \begin{split}
      \zeta_1(x_0)&\leq \int_{Q(0,1)} G(v(x_0),\nabla v(x_0))\d x\\
      &=G(v(x_0),\nabla v(x_0))\,,
    \end{split}
    \]
which proves \eqref{eq:17}.


\medskip

{\em Step 4.} 
For $|D^sv|\ecke C_{v}$-almost every $x_0$, we have that by Alberti's rank one Theorem \cite{alberti1993rank} and Theorem \ref{thm:BVblow}
the following holds true:  There exists a sequence
$\rho_l\downarrow 0$ and 
a  monotone function $\psi\in BV(-1/2,1/2)$ 
such that the rescaled functions 
\[
v^{(\rho_l)}(x):=\frac{\rho_l}{|Dv|(Q_\xi(x_0,\rho_l))}\left(
  v(x_0+\rho_l x)-\fint_{Q_\xi(x_0,\rho_l)}v (x')\d x'\right)
\]
converge for $l\to \infty$ in $L^1(Q_\xi;\R^2)$ to
the function 
\begin{equation*}
\Psi:x\mapsto \xi \psi(x\cdot \xi)\,,
\end{equation*}
where $\xi\in S^1$ fulfills $\frac{\d(Dv)}{\d|Dv|}(x_0)=\xi\otimes
\xi$.
Also, from Theorem \ref{thm:RN}, we have that
  \begin{equation}
    \label{eq:61}
  \left.\begin{split}  Dv^{(\rho_l)}&\to D\Psi \\ |Dv^{(\rho_l)}|&\to |D\Psi|\end{split}\right\}\text{ weakly * in the sense of measures.}
\end{equation}

From now on, in order to alleviate the notation, we are going to omit the index
$l$ from $\rho_l$, and we write $\lim_{\rho\to 0}$ for $\lim_{l\to\infty}$.
As a consequence of the convergence of $v^{(\rho)}\to\Psi$ in $L^1$,  we have in particular that $x_0$ is a Lebesgue point of $v$,
\[
  \lim_{\rho\to 0}\fint_{Q(x_0,\rho)}|v-v(x_0)|\d x=0\,.
  \]

By $|Dv^{(\rho)}|(Q_\xi)=1$ and the lower semicontinuity of total variation, we have
that $|D\Psi|(Q_\xi)=|D\psi|(-1/2,1/2)\leq 1$.

\medskip


Now we choose  $\rho(\lambda)\to 0$ such that $\mu(\partial
Q(x_0,\rho(\lambda)))=0$, $\e/\rho\to 0$, and 
\begin{equation*}
\zeta_2(x_0)=
\lim_{\lambda\to 0}\frac{1}{|D^sv
  |(Q_\xi(x_0,\rho(\lambda)))}\int_{Q_\xi(x_0,\rho(\lambda))}h_\lambda(v_\lambda,\nabla v_\lambda)\d x\,.
\end{equation*}

Writing $\rho\equiv\rho(\lambda)$, we note that
\begin{equation}
\begin{split}
  (v^{(\rho)})_{\e/\rho}(x)&=\frac{\rho}{|Dv|(Q_\xi(x_0,\rho))}v_\lambda(x_0+\rho x)\\
  \left(\nabla (v^{(\rho)})_{\e/\rho}\right)(x)
  &=\frac{\rho^2}{|Dv|(Q_\xi(x_0,\rho))} \left(\nabla v_\lambda\right)(x_0+\rho x)\,.
\end{split}\label{eq:44}
\end{equation}

  As in the previous step, using the fact that $x_0$ is a Lebesgue point for $v$, we obtain
    \begin{equation}
      \label{eq:60}
    \sup_{x\in Q(x_0,\rho)}|v_\lambda(x)-v(x_0)|\to 0 \text{ as }\lambda\to 0\,.
  \end{equation}




Using $h_\lambda\leq G$ (see again step 3),
we have that

\begin{equation}
\label{eq:50}
  \begin{split}
    \limsup_{\lambda\to\infty}&\frac{1}{|D^sv
      |(Q_\xi(x_0,\rho))}\int_{Q_\xi(x_0,\rho)} h_\lambda\left(v_\lambda
      ,\nabla v_\lambda
      \right)\d x\\
&\leq \limsup_{\lambda\to\infty}\frac{1}{|D^s
      v|(Q_\xi(x_0,\rho))}\int_{Q_\xi(x_0,\rho)} G\left(v_\lambda
      ,\nabla v_\lambda
      \right)\d x\,.
\end{split}
\end{equation}
By a change of variables, we obtain
\begin{equation*}
  \label{eq:37}
  \int_{Q_\xi(x_0,\rho)} G\left(v_\lambda,\nabla v_\lambda
      \right)\d x=\rho^2\int_{Q_\xi}G\left(v_\lambda
      (x_0+\rho y),\nabla v_\lambda
      (x_0+\rho y)\right)\d y\,.
  \end{equation*}
Combining this with \eqref{eq:44} and \eqref{eq:50}, we obtain
\begin{equation*}
\label{eq:62}
  \begin{split}
    \limsup_{\lambda\to\infty}&\frac{1}{|D^sv
      |(Q_\xi(x_0,\rho))}\int_{Q_\xi(x_0,\rho)} h_\lambda\left(v_\lambda
      ,\nabla v_\lambda
      \right)\d x\\
      &\leq \limsup_{\lambda\to\infty}\int_{Q_\xi} G\left(v_\lambda(x_0+\rho y)
      ,\nabla \left(v^{(\rho)}\right)_{\e/\rho}
      \right)\d y\,.
\end{split}
\end{equation*}
  
By \eqref{eq:60}, this yields
\begin{equation}
\label{eq:51}
  \begin{split}
    \limsup_{\lambda\to\infty}&\frac{1}{|D^sv
      |(Q_\xi(x_0,\rho))}\int_{Q_\xi(x_0,\rho)} h_\lambda\left(v_\lambda
      ,\nabla v_\lambda
       \right)\d x\\
&\leq \limsup_{\lambda\to\infty}\int_{Q_\xi} G\left(v(x_0)
      ,\nabla \left(v^{(\rho)}\right)_{\e/\rho}
\right)\d x\,.
\end{split}
\end{equation}
By \eqref{eq:61} and $\e/\rho\to 0$, we have that

\begin{equation*}
    \label{eq:62}
  \left.\begin{split}  \nabla\left(v^{(\rho)}\right)_{\e/\rho}\L^2&\to D\Psi \\ \left|\nabla \left(v^{(\rho)}\right)_{\e/\rho}\right|\L^2&\to |D\Psi|\end{split}\right\}\text{ weakly * in the sense of measures.}
\end{equation*}  
By Theorem \ref{thm:delladio}, it follows that
\[
  \begin{split}
 \limsup_{\lambda\to\infty}\int_{Q_\xi} G\left(v(x_0)
      ,\nabla \left(v^{(\rho)}\right)_{\e/\rho}
\right)\d x &= \int_{Q_\xi}G\left(v(x_0)
      ,\xi\otimes\xi
    \right)\d|D\Psi| \\
    &\leq G\left(v(x_0)
      ,\xi\otimes\xi
    \right)\,,
  \end{split}
\]

which (together with \eqref{eq:51}) proves
 \eqref{eq:19}.

\medskip

{\em Step 5.} For $\H^1\ecke J_{\nabla u}$-almost every $x_0$, we have that by Theorem \ref{thm:BVblow}
the following holds true:
The rescaled functions 
\[
v^{(\rho)}(x)=v
  (x_0+\rho x)-\fint_{Q_\nu(x_0,\rho)}v(x')\d x'
\]
converge in $L^1(2Q_\nu;\R^2)$ to
the function 
\begin{equation*}
\Psi:x\mapsto \begin{cases} v^+(x_0)&\text{ if } x\cdot\nu >0\\
v^-(x_0) &\text{ if } x\cdot\nu \leq 0\,.\end{cases}
\end{equation*}

Additionally we have, for every $\beta>0$, 
\[
  \lim_{\rho\to 0}\int_{(1+\beta)Q_\nu}|\nabla v^{(\rho)}|\d x=|D\Psi|((1+\beta)Q_\nu)\,.
  \]

Now we choose $\rho(\lambda)$  such that $\rho\to 0$,
and $\e/\rho\to 0$  as $\lambda\to \infty$. 
Then we   may write, again using $h_\lambda\leq G$,
\begin{equation}
\label{eq:26}  
\begin{split}
  \lim_{\lambda\to 0}\frac{1}{\rho}\int_{Q_\nu(x_0,\rho)} h_\lambda(v_\lambda
  ,\nabla v_\lambda)\d x
  &\leq \liminf_{\lambda\to 0}\frac{1}{\rho}\int_{Q_\nu(x_0,\rho)} G(v_\lambda
  ,\nabla v_\lambda)\d x\\
  &\leq \liminf_{\lambda\to 0}\int_{Q_\nu} \rho G(v_\lambda
   (x_0+\rho x),\nabla v_\lambda(x_0+\rho x))\d x\\
  &=\liminf_{\lambda\to 0}\int_{Q_\nu} G\left(\eta_{\e/\rho}*v^{(\rho)},
      \nabla\eta_{\e/\rho}*v^{(\rho)}\right)\d x\,.
  \end{split}
\end{equation}

Since $\e/\rho\to 0$, we have that
\[
  \begin{split}
  \eta_{\e/\rho}*v^{(\rho)}&\to \Psi \quad\text{ in }L^1(Q_\nu)\\
  \lim_{\lambda\to \infty}\int_{Q_\nu}|\nabla \eta_{\e/\rho}*v^{(\rho)}|\d x&=|D\Psi|(Q_\nu)\,.
\end{split}
\]
By Proposition \ref{prop:up_jump_blowup}, which will be proved in Section \ref{sec:proof-jump} below, this implies 
\[
  \begin{split}
    \limsup_{\lambda\to\infty}\int_{Q_\nu}& G(\eta_{\e/\rho}*v^{(\rho)}, \nabla \eta_{\e/\rho}*v^{(\rho)})\d x\\
    &\leq 2\sqrt{1+|\nu^\bot\cdot v(x_0)|^2} \arccos(\NN(v^+(x_0))\cdot\NN(v^-(x_0)))\,,
\end{split}
\]
and  combining the latter with \eqref{eq:26} proves \eqref{eq:18}, since the left hand side of \eqref{eq:26} is just $\zeta_3(x_0)$.
\end{proof}

\section{Proof of the upper bound for the blow-up of the jump part}
\label{sec:proof-jump}

We recall that $G(v,\xi)=2\rho^0(S(v,\xi))\sqrt{1+|v|^2}$. In the present section, we write $I=[-1/2,1/2]$.

\begin{lemma}
\label{lem:cap_estim}
  Let $v_j\in C^1( Q)$ such that $v_j\to 0$ in $W^{1,1}(Q)$. Furthermore, let $P:\R^2\to\R$ be the projection $P(x_1,x_2)=x_1$, $\Delta>0$ and for $j\in \N$,
  \[
    A_j:=\{x\in Q:|v_j(x)|>\Delta\}\,.
  \]
  Then $\L^1(P(A_j))\to 0$ as $j\to \infty$.
\end{lemma}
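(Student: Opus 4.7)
The plan is to decompose $P(A_j)$ into two pieces: one where $v_j$ is uniformly large on the entire vertical slice, and one where $v_j$ must oscillate substantially on the slice, so that the slice carries a large amount of $\partial_2 v_j$.

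More precisely, for each $j$ I would set
\[
B_j := \Bigl\{x_1 \in I : \inf_{x_2 \in I} |v_j(x_1,x_2)| \geq \Delta/2\Bigr\}, \quad C_j := \Bigl\{x_1 \in I : \int_I |\partial_2 v_j(x_1,x_2)|\,\d x_2 \geq \Delta/2\Bigr\}
\]
and show $P(A_j) \subseteq B_j \cup C_j$. The inclusion follows from the fundamental theorem of calculus applied along the vertical slice $\{x_1\}\times I$: if $x_1 \in P(A_j) \setminus B_j$, then there are points $x_2^*, x_2' \in I$ with $|v_j(x_1,x_2^*)| > \Delta$ and $|v_j(x_1,x_2')| < \Delta/2$, whence
\[
\Delta/2 < |v_j(x_1,x_2^*) - v_j(x_1,x_2')| \leq \int_I |\partial_2 v_j(x_1,t)|\,\d t,
\]
so $x_1 \in C_j$.

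The measure of each piece is then controlled by a Chebyshev-type estimate combined with Fubini. For $B_j$, since $|v_j(x_1,\cdot)| \geq \Delta/2$ on all of $I$ for $x_1 \in B_j$,
\[
\|v_j\|_{L^1(Q)} \geq \int_{B_j} \int_I |v_j(x_1,x_2)|\,\d x_2 \,\d x_1 \geq (\Delta/2)\,\L^1(B_j),
\]
and for $C_j$,
\[
\|\partial_2 v_j\|_{L^1(Q)} \geq \int_{C_j} \int_I |\partial_2 v_j(x_1,x_2)|\,\d x_2 \,\d x_1 \geq (\Delta/2)\,\L^1(C_j).
\]
The hypothesis $v_j \to 0$ in $W^{1,1}(Q)$ makes both right-hand sides vanish, so $\L^1(P(A_j)) \leq \L^1(B_j)+\L^1(C_j) \to 0$.

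I do not expect any real obstacle here — the argument is essentially a slicing/Chebyshev combination and uses only the defining properties of $W^{1,1}$-convergence. The one point to be slightly careful about is the inclusion $P(A_j) \subseteq B_j \cup C_j$: one must use the $C^1$ regularity of $v_j$ to guarantee that the infimum in the definition of $B_j$ is attained (or at least approximated by an actual value of $v_j$), so that the complement of $B_j$ really yields a point $x_2'$ on the slice with $|v_j(x_1,x_2')| < \Delta/2$.
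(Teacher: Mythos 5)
Your proof is correct and follows essentially the same slicing-plus-Chebyshev argument as the paper: the set $C_j$ is exactly the paper's $J^2_j$, and the fundamental theorem of calculus along vertical slices gives the same inclusion, the only difference being that the paper controls the ``non-oscillating'' slices via the traces $v_j(\cdot,\pm 1/2)\to 0$ in $L^1(I)$ while you use the infimum over the slice together with a Chebyshev bound on $\|v_j\|_{L^1(Q)}$, which works just as well. Your final caveat is unnecessary: no attainment of the infimum is needed, since $x_1\notin B_j$ means $\inf_{x_2}|v_j(x_1,x_2)|<\Delta/2$, and the definition of the infimum already provides a point $x_2'$ with $|v_j(x_1,x_2')|<\Delta/2$.
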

\begin{proof}
  By the continuity of the trace operator $W^{1,1}(Q)\to L^1(\partial Q)$, we have that
  \[
    v_j(\cdot,-1/2)\to 0,\quad v_j(\cdot,1/2)\to 0 \quad\text{ in }L^1(I)\,.
  \]
  Setting
  \[
    \begin{split}
    J^1_j&:=\left\{t\in I: |v_j(t,-1/2)|+|v_j(t,1/2)|>\frac{\Delta}{2}\right\}\,,\\
    J^2_j&:=\left\{t\in I:\int_I|\partial_{x_2}v_j(t,x_2)|\d x_2>\frac{\Delta}{2}\right\}\,,
  \end{split}
  \]
  we have that $P(A_j)\subset J^1_j\cup J^2_j$ and $\L^1(J_j^1)+\L^1(J_j^2)\to 0$. This proves the claim.  
\end{proof}

\begin{lemma}
\label{lem:G1d_estim}  Let $a_1\in\R$ and  $w\in C^1(I)$. Then
  \[
    \begin{split}
      \int_{-1/2}^{1/2}G&\left( \left(\begin{array}{c}a_1\\w(t)\end{array}\right),\left(\begin{array}{cc} 0& 0\\0&w'(t)\end{array}\right)\right)\d t \\
      &\leq 2 \sqrt{1+a_1^2}\left(\arccos \NN\left(\begin{array}{c}a_1\\w(-1/2)\end{array}\right)\cdot \NN\left(\begin{array}{c}a_1\\w(+1/2)\end{array}\right)
      +2\int_{-1/2}^{1/2}|\min(0,w'(t))|\d t\right)\,.
  \end{split}
      \]
\end{lemma}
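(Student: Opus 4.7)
The plan is to reduce the inequality to a simple one-dimensional statement via the substitution $\theta(t):=\arctan(w(t)/b)$, where $b:=\sqrt{1+a_1^2}$. Geometrically, $(a_1,w(t))^T$ is the gradient of a ``tilted cylinder'' graph, and $\theta$ measures the angle swept out by its unit normal along the slice; the lemma is really a statement about the total variation of this angle versus its net change.

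The first step is to compute $G(v,\xi)$ explicitly for $v=(a_1,w)^T$ and $\xi=w'\,e_2\otimes e_2$. A direct calculation of $\gm(v)^{-1}$ yields $e_2^\top\gm(v)^{-1}e_2 = (1+a_1^2)/(1+a_1^2+w^2)$, and since $\xi$ is rank one, so is $S(v,\xi)=\gm(v)^{-1/2}\secf(v,\xi)\gm(v)^{-1/2}$; its nonzero eigenvalue is $(1+a_1^2)w'/(1+a_1^2+w^2)^{3/2}$, which gives
\[
G(v,\xi) = 2\rho^0(S(v,\xi))\sqrt{1+|v|^2} = \frac{2(1+a_1^2)\,|w'|}{1+a_1^2+w^2}.
\]
Under $w=b\tan\theta$, so that $w'=b\sec^2\theta\cdot\theta'$ and $1+a_1^2+w^2=b^2\sec^2\theta$, this simplifies to $G(v(t),\xi(t))\,dt = 2b\,|\theta'(t)|\,dt$. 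A short trigonometric manipulation also yields $\NN(v(t_1))\cdot\NN(v(t_2)) = \cos(\theta(t_1)-\theta(t_2))$, and since $\theta\in(-\pi/2,\pi/2)$ this gives
\[
\arccos\bigl(\NN(v(-1/2))\cdot\NN(v(1/2))\bigr) \;=\; \Bigl|\int_{-1/2}^{1/2}\theta'(t)\,dt\Bigr|.
\]

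After dividing the desired inequality by $2b=2\sqrt{1+a_1^2}$, the task becomes to show
\[
\int_{-1/2}^{1/2}|\theta'|\,dt \;\leq\; \Bigl|\int_{-1/2}^{1/2}\theta'\,dt\Bigr| + 2\int_{-1/2}^{1/2}|\min(0,w'(t))|\,dt.
\]
Splitting $\theta'=\theta'_+-\theta'_-$ with $\theta'_\pm\ge 0$, one has $\int|\theta'|\,dt - \bigl|\int\theta'\,dt\bigr| = 2\min\bigl(\int\theta'_+\,dt,\int\theta'_-\,dt\bigr) \le 2\int\theta'_-\,dt$. The identity $w'=b\sec^2\theta\cdot\theta'$ together with $b\sec^2\theta\ge 1$ gives $|\min(0,w')|=b\sec^2\theta\cdot\theta'_- \ge \theta'_-$ pointwise, so $2\int\theta'_-\,dt \le 2\int|\min(0,w')|\,dt$, closing the argument.

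Conceptually, this inequality runs opposite to the familiar ``total turning of the normal is at least the geodesic distance between its endpoints on $S^2$'' used in Lemma \ref{lem:slice}: here we must upper-bound the total turning by the net turning, and the correction $|\min(0,w')|$ must absorb any backtracking. The main (mild) obstacle is noticing that $b\sec^2\theta\ge 1$ makes any backward $\theta$-motion dominated pointwise by the corresponding backward $w$-motion; the initial algebraic computation of $G$ is mechanical but tedious, and the rest of the argument is a one-line manipulation of positive and negative parts.
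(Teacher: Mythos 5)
Your proposal is correct. All the computational steps check out: the formula $G\bigl((a_1,w)^T, w'e_2\otimes e_2\bigr)=\frac{2(1+a_1^2)|w'|}{1+a_1^2+w^2}$ follows from $e_2\cdot\gm(v)^{-1}e_2=\frac{1+a_1^2}{1+a_1^2+w^2}$ and the rank-one structure of $S$, the substitution $w=b\tan\theta$ with $b=\sqrt{1+a_1^2}$ indeed gives $G\,\d t=2b|\theta'|\,\d t$ and $\NN(v(t_1))\cdot\NN(v(t_2))=\cos(\theta(t_1)-\theta(t_2))$ with $|\theta_1-\theta_2|<\pi$, and the final step $\int|\theta'|-\bigl|\int\theta'\bigr|=2\min\bigl(\int\theta'_+,\int\theta'_-\bigr)\le 2\int\theta'_-\le 2\int|\min(0,w')|$ uses exactly the pointwise bound $|\theta'|\le|w'|$ coming from $b\sec^2\theta\ge 1$.

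Your route differs from the paper's in presentation rather than in substance, but the difference is worth noting. The paper first reinterprets the integral as $\int_{\gr W}2\rho^0(S_{\gr W})\d\H^2$ for the primitive $W$, applies the same rotate-cut-and-glue construction as in Lemma \ref{lem:Kcalc} to reduce to a surface constant in the $x_1$-direction, and then estimates the total variation of the normal curve on $S^1$ by the endpoint distance plus twice the backward motion, using $|\partial_t\NN|\le|\partial_t\tilde w|$. You bypass the geometric reduction entirely: the explicit matrix computation of $G$ plus the substitution $\theta=\arctan(w/b)$ produce the same one-dimensional inequality directly, and your observation $b\sec^2\theta\ge1$ is precisely the paper's bound $|\partial_t\NN|\le|\partial_t\tilde w|$ in disguise. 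What the paper's route buys is conceptual transparency and consistency with Lemmas \ref{lem:slice} and \ref{lem:Kcalc} (rotation invariance of the integrand, the normal curve on the sphere); what yours buys is a shorter, self-contained, purely analytic argument that does not rely on the cut-and-glue step or on the intrinsic surface-integral interpretation. Either proof is acceptable.
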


\begin{proof}
First we recall the geometric meaning of the integral: Let
  \[
    W(x_1,x_2)=\int_0^{x_2}w(t)\d t+a_1 x_1\,.
  \]
  Then we have that
  \[
    \begin{split}
    \int_{-1/2}^{1/2} G\left( \left(\begin{array}{c}a_1\\w(t)\end{array}\right),\left(\begin{array}{cc} 0& 0\\0&w'(t)\end{array}\right)\right)\d t
&    =\int_Q G(\nabla W,\nabla^2 W)\d x\\
    &= \int_{\gr W|_Q} 2\rho^0(S_{\gr W|_Q})\d\H^2\,.
  \end{split}
\]
As in the proof of Lemma \ref{lem:Kcalc}, we may rotate, cut and glue the graph $\gr W_Q$ to obtain the graph of the function
\[
  \tilde W:\left[-\frac{\sqrt{1+a_1^2}}{2},\frac{\sqrt{1+a_1^2}}{2}\right]\times[-1/2,1/2]\to \R\,,
\]
defined by
\[
  \tilde W(x_1,x_2)=\frac{1}{\sqrt{1+a_1^2}} \int_0^{x_2} w(t)\d t\,,
\]
without changing the ``energy'', i.e., $\tilde W$ satisfies
\[
  \int_{\gr W|_Q} 2\rho^0(S_{\gr W|_Q})\d\H^2=\int_{\gr \tilde W|_{\tilde Q}} 2\rho^0(S_{\gr \tilde W|_Q})\d\H^2\,,
  \]
  where we have written $\tilde Q=\left[-\frac{\sqrt{1+a_1^2}}{2},\frac{\sqrt{1+a_1^2}}{2}\right]\times[-1/2,1/2]$. Now $\tilde W$ is constant in $x_1$-direction, which implies that the normal vector is contained in the $x_2$-$x_3$ plane,
  \[\NN(\nabla\tilde W)\in\{(0,x_2,x_3)^T:x_2^2+x_3^2=1\}\,,\]
  which implies
  \[\rho^0(S_{\gr \tilde W})=\left(1+|\partial_{x_2}\tilde W|^2\right)^{-1/2}|\partial_{x_2}\NN(\nabla\tilde W)|\,.
  \]

  This leaves us, after the passage back to a one-dimensional setting, with the following  calculation:
  \[
      \int_{\gr \tilde W|_{\tilde Q}} 2\rho^0(S_{\gr \tilde W|_Q})\d\H^2 
     =
      2\sqrt{1+a_1^2}\int_{-1/2}^{1/2} \left|\partial_t \NN\left(\begin{array}{c}0\\w(t)/\sqrt{1+a_1^2}\end{array}\right)\right|\d t\,.
\]

We observe $\NN((0,w(t)/\sqrt{1+a_1^2})^T)=\NN((0,w(t))^T)$.  By giving $\{(0,x_2,x_3)^T:x_2^2+x_3^2=1\}\simeq S^1$ an orientation (the one corresponding to increasing $w(t)$), the total variation of the curve
$\NN((0,w(\cdot))^T)$ can be estimated from above by the distance of its endpoints on $S^1$, plus twice the integral of $|\partial_t\NN((0,w(t))^T)|$ over the region where $\partial_t\NN((0,w(t))^T)$ is anti-parallel to the orientation:

\[
\begin{split}
  \int_{-1/2}^{1/2} \left|\partial_t \NN\left(\begin{array}{c}0\\w(t)\end{array}\right)\right|\d t
    &\leq  \arccos \NN\left(\begin{array}{c}0\\w(-1/2)\end{array}\right)\cdot \NN\left(\begin{array}{c}0\\w(+1/2)\end{array}\right)\\
    &\quad+2\int_{\{t:w'(t)\leq 0\}}  \left|\partial_t \NN\left(\begin{array}{c}0\\w(t)\end{array}\right)\right|\d t\\
  &\leq     
 \arccos \NN\left(\begin{array}{c}a_1\\w(-1/2)\end{array}\right)\cdot \NN\left(\begin{array}{c}a_1\\w(+1/2)\end{array}\right)\\
&\quad +2       \int_{-1/2}^{1/2}|\min(0,w'(t))|\d t\,.
  \end{split}
\]
In the last inequality, we have used the fact that the angle between the normals at the endpoints does not change when applying a rotation, and $|\partial_t\NN(\tilde w(t))|\leq|\partial_t \tilde w(t)|$. This completes the proof of  the lemma.
\end{proof}

\begin{proposition}
  \label{prop:up_jump_blowup}
  Let $a\in \R^2$, $\nu\in S^1$, $t\in \R$, $b=a+t\nu$, and $\Psi:Q_\nu\to\R^2$ defined by
 
  \[
    \Psi(x)=\begin{cases}a&\text{ if } x\cdot \nu\leq 0\\b& \text{ if }x\cdot \nu>0\,.\end{cases}
  \]
  Assume that $v_j\in C^1(Q_\nu;\R^2)$ is a sequence such that
  \[
    \begin{split}
    v_j&\to\Psi \quad\text{ in } L^1(Q_\nu;\R^2)\\
    \int_{Q_\nu}|\nabla v_j|\d x &\to |D\Psi|(Q_\nu)=|t|\,.
  \end{split}
    \]
    Then
    \[
      \limsup_{j\to\infty} \int_{Q_\nu} G(v_j,\nabla v_j)\d x\leq 2\sqrt{ 1+|a\cdot\nu^\bot|^2}\arccos \NN(b)\cdot \NN(a)\,.
      \]
\end{proposition}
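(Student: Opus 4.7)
The plan is to reduce the 2D problem to the 1D situation of Lemma \ref{lem:G1d_estim} by slicing in the $\nu$-direction. After a rotation so that $\nu=e_2$ (whence $b=a+te_2$ and $|a\cdot\nu^\bot|=|a_1|$) and, without loss of generality, $t\ge 0$, write $v_j=(p_j,q_j)$ and denote the entries of the symmetric matrix $\nabla v_j$ by $\alpha_j=\partial_1 p_j$, $\beta_j=\partial_2 p_j=\partial_1 q_j$ and $\gamma_j=\partial_2 q_j$. The first step is to show that essentially all of the gradient mass concentrates in the entry $\gamma_j$. By Fubini and $v_j\to\Psi$ in $L^1$, the slice $q_j(x_1,\cdot)\to\Psi^{(2)}(x_1,\cdot)$ in $L^1(I)$ for a.e.~$x_1$; lower semicontinuity of the $1$D total variation on slices and Fatou give $\liminf\int|\gamma_j|\,\d x\ge t$. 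Combined with $|\gamma_j|\le|\nabla v_j|$ and $\int|\nabla v_j|\to t$, this forces $\int|\gamma_j|\to t$, and the identity $|\nabla v_j|^2-\gamma_j^2=\alpha_j^2+2\beta_j^2$ (using symmetry $\partial_2 p_j=\partial_1 q_j$) together with Cauchy--Schwarz then yields $\|\alpha_j\|_{L^1}+\|\beta_j\|_{L^1}\to 0$.

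Exploiting sublinearity of $G(v,\cdot)$ (inherited from $\rho^0$), I split
\[
G(v_j,\nabla v_j)\le G(v_j,\gamma_j\,e_2\otimes e_2)+G(v_j,\nabla v_j-\gamma_j\,e_2\otimes e_2),
\]
and the second summand is dominated by $4|\nabla v_j-\gamma_j\,e_2\otimes e_2|$, whose $L^1$-norm vanishes by Step 1. In the leading summand I then replace $v_j$ by $(a_1,q_j)$; by Lemma \ref{lem:hbounds} combined with $G(v,\xi)\le 4|\xi|$ (from Lemma \ref{lem:basicSestimate}), the pointwise error is bounded by $C|p_j-a_1|\,|\gamma_j|$. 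Granting for the moment that this error integrates to $0$ (this is the main obstacle, addressed below), Lemma \ref{lem:G1d_estim} applied slicewise with $w=q_j(x_1,\cdot)$ and integrated in $x_1\in I$ yields
\[
\int_{Q_\nu}G((a_1,q_j),\gamma_j\,e_2\otimes e_2)\d x\le 2\sqrt{1+a_1^2}\int_I\bigl[\Theta_j(x_1)+2N_j(x_1)\bigr]\d x_1,
\]
with $\Theta_j(x_1)=\arccos\NN(a_1,q_j(x_1,-\tfrac12))\cdot\NN(a_1,q_j(x_1,\tfrac12))$ and $N_j(x_1)=\int_I|\min(0,\gamma_j(x_1,\cdot))|\,\d x_2$. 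The strict $BV$-convergence established in Step 1 triggers $L^1$-trace convergence $q_j(\cdot,\pm\tfrac12)\to a_2,\,a_2+t$, so after a subsequence $\Theta_j\to\arccos\NN(a)\cdot\NN(b)$ pointwise, and dominated convergence (using $\Theta_j\le\pi$) handles the angle term. The negative-part term vanishes because $2\int_I N_j\,\d x_1=\int|\gamma_j|-\int\gamma_j\to t-t=0$, where $\int\gamma_j\to t$ is Fubini plus trace convergence.

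The main obstacle is showing $\int|p_j-a_1|\,|\gamma_j|\,\d x\to 0$: the naive H\"older estimate fails since $|\gamma_j|$ is only bounded in $L^1(Q_\nu)$ and concentrates on the jump line $\{x_2=0\}$. The plan is to apply Lemma \ref{lem:cap_estim} to $p_j-a_1$, which converges to $0$ in $W^{1,1}(Q_\nu)$ by Step 1, obtaining $\L^1(B_j^\Delta)\to 0$ for the bad-projection set $B_j^\Delta:=\{x_1\in I:\sup_{x_2\in I}|p_j(x_1,x_2)-a_1|>\Delta\}$. Outside $B_j^\Delta\times I$ the integrand is $\le\Delta|\gamma_j|$, contributing at most $2t\Delta$. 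On $B_j^\Delta\times I$ I combine the $L^\infty$-bound on $v_j$ (available in the application context, where $v_j=\eta_{\e/\rho}*v^{(\rho)}$ is a mollification of a bounded $BV$-function) with the fact that the slice total variations $f_j(x_1):=\int_I|\gamma_j(x_1,\cdot)|\,\d x_2$ converge to the constant $t$ in $L^1(I)$ (by the same reasoning as in Step 1) and hence form a uniformly integrable family; this gives $\int_{B_j^\Delta}f_j\to 0$. Letting first $j\to\infty$ and then $\Delta\to 0$ concludes the step.
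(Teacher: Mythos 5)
Your proof follows essentially the same strategy as the paper's: split $Q_\nu$ into good and bad vertical strips via Lemma \ref{lem:cap_estim}, reduce to the single entry $\gamma_j=\partial_{x_2}(v_j)_2$ using that the remaining gradient entries vanish in $L^1$, apply Lemma \ref{lem:G1d_estim} slicewise, and conclude with trace convergence for the angle term and $\int_Q|\min(0,\gamma_j)|\,\d x\to 0$ for the negative-part term. All of these steps are correct as you state them. The one point where your argument, as written, does not prove the proposition \emph{as stated} is the treatment of the bad strips $B_j^\Delta\times I$: to control the first-argument replacement error $\int_{B_j^\Delta\times I}|p_j-a_1|\,|\gamma_j|\,\d x$ you invoke an $L^\infty$ bound on $v_j$, justified only by ``the application context''. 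The proposition makes no such assumption; its hypotheses ($v_j\to\Psi$ in $L^1$ plus convergence of the total gradient mass) do not force $p_j$ to stay bounded on the bad strips, since a thin spike of height $h$ over a set of diameter $\delta$ costs only $O(h\delta)$ in $\|\nabla p_j\|_{L^1}$ while $\gamma_j$ may concentrate a fixed amount of mass inside it; so this error term is not controlled by the hypotheses alone, and what you prove is a weaker statement with an added $L^\infty$ hypothesis (sufficient for the application in Step 5 of the upper bound, but not the proposition itself). A similar, though harmless, import from the application is your use of the symmetry $\partial_{x_2}(v_j)_1=\partial_{x_1}(v_j)_2$, which the statement does not guarantee; your Cauchy--Schwarz argument works just as well without it.

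The repair is already contained in your own argument, and it is exactly what the paper does. You have shown that the slice variations $f_j(x_1)=\int_I|\gamma_j(x_1,\cdot)|\,\d x_2$ converge to $t$ in $L^1(I)$ and that $\L^1(B_j^\Delta)\to 0$; hence $\int_{B_j^\Delta}f_j\,\d x_1\to 0$, and together with $\|\alpha_j\|_{L^1}+\|\beta_j\|_{L^1}\to 0$ this gives $\int_{B_j^\Delta\times I}|\nabla v_j|\,\d x\to 0$. Since $G(v,\xi)\leq C|\xi|$, the entire contribution of the bad strips to $\int G(v_j,\nabla v_j)\,\d x$ therefore vanishes, with no need to touch the first argument (and hence no need for any $L^\infty$ control) there. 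Perform the replacement $v_j\mapsto(a_1,(v_j)_2)$ only on the good strips, where the pointwise error is at most $C\Delta|\gamma_j|$, and the remainder of your proof goes through unchanged. This is precisely the structure of the paper's proof, which discards the bad strips via the mass estimate \eqref{eq:65} (obtained from the trace inequality \eqref{eq:54}, for which your $L^1(I)$-convergence of $f_j$ is an equivalent substitute) before making any replacement in the first argument.
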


\begin{proof}
  We may assume without loss of generality that $t\geq 0$, $\nu=e_2$,  which implies $b=(a_1,a_2+t)^T$. Let $P:\R^2\to\R$ be defined by  $P(x_1,x_2)=x_1$.

\medskip
  
Fix $\Delta>0$. For $j\in\N$, we will split $Q\equiv Q_\nu$ into a ``good'' and a ``bad'' set, and estimate the energy on these sets separately. We write $v_j=((v_j)_1,(v_j)_2)^T$ and define the bad set by setting
\[ 
  \begin{split}
  \tilde A_j&:=\{x\in Q:  |(v_j)_1( x)-a_1|>\Delta\}\,,\\
  A_j&:=P^{-1}(P(\tilde A_j))
\end{split}
\]
The assumptions of the present proposition imply in particular that
\[
  \liminf_{j\to\infty}\int_Q|\nabla (v_j)_2|\d x\geq |D\Psi_2|(Q_\nu)=t\,,
\]
and hence
  \begin{equation}
    \label{eq:27}
    \begin{split}
      (v_j)_1-a_1&\to 0 \quad\text{ in }W^{1,1}(Q)\\
      \partial_{x_1}(v_j)_2&\to 0 \quad\text{ in }L^{1}(Q)\,.
  \end{split}
\end{equation}


  Next we claim that there exists a sequence $\beta_j\downarrow 0$ such that for any (measurable) $J\subset [-1/2,1/2]$ and any $j\in\N$, we have that 
    \begin{equation}
\label{eq:54}    \left| \int_J |v_j(x_1,1/2)-v_j(x_1,-1/2)|\d x_1- t\L^1(J)\right|\leq \beta_j\,.
  \end{equation}
  To prove this claim, we note that we thanks to  the continuity of the trace operator $W^{1,1}(Q)\to L^1(\partial Q)$, we have
  \[
    \left.\begin{split}
    v_j(\cdot,-1/2)&\to a \\
        v_j(\cdot,1/2)&\to b 
      \end{split}\right\}\quad\text{ in }L^1(I)\,.
  \]
  Hence, with
  \[
    \beta_j:= \int_{-1/2}^{+1/2} |v_j(x_1,1/2)-v_j(x_1,-1/2)-(b-a)|\d x_1\,,
  \]
  we obtain the claim \eqref{eq:54} by the triangle inequality.

  Additionally, by possibly increasing $\beta_j$, but still retaining the property $\beta_j\downarrow 0$, we may achieve
  \begin{equation}
    \label{eq:64}
    \left|\int_Q|\nabla v_j|\d x-t\right|\leq\beta_j\,.
  \end{equation}
    
  We set $I_j=P(A_j)$. By \eqref{eq:27} and Lemma \ref{lem:cap_estim}, we have that $\L^1(I_j)\to 0$.
  We are now in position to estimate the contribution of the bad sets $A_j$:
    \begin{equation}
    \label{eq:65}\begin{split}
    \limsup_{j\to\infty} \int_{A_j}|\nabla v_j|\d x& =\limsup_{j\to\infty}
    \left|\int_{A_j}|\nabla v_j|\d x-t\L^1(I_j)\right|\\
    &\leq \limsup_{j\to\infty} \left|\int_{Q}|\nabla v_j|\d x-t\right|
    +\left|\int_{Q\setminus A_j}|\nabla v_j|\d x-t\L^1(I\setminus I_j)\right|\\
    &=0
  \end{split}
\end{equation}
where we have used in the last inequality that by \eqref{eq:54} and \eqref{eq:64},
\[
  t\L^1(I\setminus I_j)-\beta_j\leq 
 \int_{Q\setminus A_j}|\nabla v_j|\d x\leq t+\beta_j\,,
    \]
    which yields $t=\lim_{j\to\infty}\int_{Q\setminus A_j}|\nabla v_j|\d x$. The estimate \eqref{eq:65} implies that
    \[
      \limsup_{j\to\infty}\int_{A_j} G(v_j,\nabla v_j)\d x=0\,.
    \]
    We now turn our attention to the contribution of the good sets
    \[
      E_j:=Q\setminus A_j\,.
    \]
    By Lemma \ref{lem:hbounds}, we have that on $E_j$,
      \begin{equation}\label{eq:66}
        \begin{split}
        \Big|G(v_j,\nabla v_j)&-G\left((a_1,(v_j)_2)^T, \nabla v_j\right)\Big|\\
        &\leq C\Delta \max \left(G(v_j,\nabla v_j),G\left((a_1,(v_j)_2)^T, \nabla v_j\right)\right)\,.
      \end{split}
    \end{equation}
    By the subadditivity of $\xi\mapsto G(v,\xi)$, $G(v,\xi)\leq C|\xi|$, $\nabla (v_j)_1\to 0$ in $L^1$ and $\partial_{x_1} (v_j)_2\to 0$ in $L^1$ (see \eqref{eq:27}), we have that
    \begin{equation}
      \label{eq:67}
      \begin{split}
      \limsup_{j\to\infty} &\int_{E_j} 
      G\left((a_1,(v_j)_2)^T, \nabla v_j\right)\d x\\
     & \leq \limsup_{j\to\infty} \int_{E_j} G\left((a_1,(v_j)_2)^T, \left(\begin{array}{cc}0& 0\\0&\partial_{x_2} (v_j)_2\end{array}\right)\right)\d x\,.
   \end{split}
    \end{equation}
    Combining \eqref{eq:66} and \eqref{eq:67}, we obtain
    \[
      \begin{split}
\limsup_{j\to\infty} &\int_{E_j} 
G\left(v_j, \nabla v_j\right)\d x\\
&\leq
\frac{1+C\Delta}{1-C\Delta}
\limsup_{j\to\infty} \int_{E_j} G\left((a_1,(v_j)_2)^T, \left(\begin{array}{cc}0& 0\\0&\partial_{x_2} (v_j)_2\end{array}\right)\right)\d x\,.
\end{split}
\]
By Lemma \ref{lem:G1d_estim}, we obtain
\[
  \begin{split}
\frac{1-C\Delta}{1+C\Delta}  \limsup_{j\to\infty}& \int_{E_j} 
  G\left(v_j, \nabla v_j\right)\d x\\
  &\leq
2\sqrt{1+a_1^2}\int_{-1/2}^{1/2} \arccos \NN(v_j(x_1,-1/2))\cdot\NN(v_j(x_1,1/2))\d x_1\\
  &\quad+4\sqrt{1+a_1^2}\int_Q\left|\min(0,\partial_{x_2}(v_j)_2)\right|\d x\,.
  \end{split}
  \]
Since $v_j(\cdot,-1/2)\to a$ and $v_j(\cdot,1/2)\to b$ in $L^1(I)$, and 
\[
  \int_Q\left|\min(0,\partial_{x_2}(v_j)_2)\right|\d x\to 0\,,
  \]
  the proof of the proposition is completed by sending $\Delta\to 0$.
\end{proof}

\appendix

\section{Proof of Proposition \ref{prop:Q2F}}
\label{sec:proof-prop-refpr}
\begin{proof}[Proof of Proposition \ref{prop:Q2F}]
  First we prove $\sqrt{1+|v|^2}g_\lambda(S(v,\cdot))\leq Q_2 f_\lambda(v,\cdot)$. Indeed, we prove
  the slightly stronger claim $\sqrt{1+|v|^2}g_\lambda(S(v,\cdot))\leq Q_1 f(v,\cdot)$,
  following the proof of Theorem 6.28 in \cite{MR2361288}, where this is proved
  for $\lambda=1$ and $v=0$. The modifications that are necessary with respect
  to that proof are minor, so we will be brief. 

First one shows that $g_1(S(v,\cdot))$ is polyconvex
  by defining
\[
\theta(t)=\begin{cases}2t &\text{ if }t\leq 1\\1+t^2 &\text{ else,}\end{cases}
\]
and convex functions $H_\pm:\R^{2\times 2}\times \R\to\R$ by
\[
H_\pm(\xi,A)=\theta\left(\left(|S(v,\xi)|^2\pm 2\det
    S(v,\xi)\right)^{1/2}\right)\mp \frac{2A}{(1+|v|^2)^2}\,.
\]
Defining the convex function $H(\xi,A)=\max (H_+(\xi,A),H_-(\xi,A))$,  one can then show
that $g_1(S(v,\xi))=H(\xi,\det\xi)$ and hence $g_1(S(v,\cdot))$ is polyconvex. Furthermore,
we have that $g_\lambda(S(v,\cdot))=\sqrt{\lambda} g_1(S(v,\cdot/\sqrt{\lambda}))$ and
hence $g_\lambda(S(v,\cdot))$ is polyconvex for every $\lambda>0$. The
inequality $\sqrt{1+|v|^2}g_\lambda(S(v,\cdot))\leq f_\lambda(v,\cdot)$ can be verified from the definitions. This
proves $\sqrt{1+|v|^2} g_\lambda(S(v,\cdot))\leq Q_2 f_\lambda(v,\cdot)$.
It
remains to show the opposite inequality. To do so, we make the following
definition:
\begin{definition}
  Let $f:\Rnsym\to \R$. We say that $f$ is symmetric rank one convex if
\[
f(t\xi_1+(1-t)\xi_2)\leq t f(\xi_1)+(1-t) f(\xi_2) 
\]
for all $t\in[0,1]$, and for all $\xi_1,\xi_2\in\Rnsym$ such that $\xi_1-\xi_2=\alpha\eta\otimes \eta$ for
some $\alpha\in \R$, $\eta\in\R^n$.

Furthermore, for $f:\Rnsym\to \R$, we set
\[
\Rs f(\xi):=\sup\{g(\xi):g\leq f \text{ and } g \text{ is symmetric rank one
  convex}\}\,.
\]
\end{definition}
By Lemma B.5 of \cite{olbermann2017michell}, we have $Q_2f_\lambda(v,\cdot)\leq
\Rs f_\lambda(v,\cdot)$. In Lemma \ref{lem:RsGl} below, the inequality $\Rs
f_\lambda(v,\cdot)\leq h_\lambda(v,\cdot)$ is proved. This completes the proof
of the proposition.
\end{proof}

\begin{lemma}
\label{lem:RsGl}
  We have
\[
\Rs f_\lambda(v,\cdot)\leq h_\lambda(v,\cdot)\,.
\]
\end{lemma}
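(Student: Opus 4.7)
The plan is to reduce the claimed bound, for general $v$, to its $v=0$ counterpart $R^{\mathrm{sym}}[\lambda^{-1/2}F_\lambda]\leq g_\lambda$ on $\Rsym$, which is established (modulo the routine modifications already noted) in the appendix of \cite{olbermann2017michell}. The reduction is effected by the linear change of variable $L_v : \Rsym \to \Rsym$, $L_v\xi := S(v,\xi) = (1+|v|^2)^{-1/2}\gm(v)^{-1/2}\xi\,\gm(v)^{-1/2}$. Since $\gm(v)^{-1/2}$ is symmetric positive definite, $L_v$ is a linear bijection of $\Rsym$ onto itself.

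The first step is to record that $L_v$ preserves the symmetric rank-one structure: a direct calculation yields
\[
L_v(\alpha\,\eta\otimes\eta)=\alpha\,(1+|v|^2)^{-1/2}(\gm(v)^{-1/2}\eta)\otimes(\gm(v)^{-1/2}\eta),\qquad \alpha\in\R,\ \eta\in\R^2,
\]
and analogously for $L_v^{-1}$. Therefore, a function $\tilde g:\Rsym\to\R$ is symmetric rank-one convex iff $\tilde g\circ L_v$ is, and the symmetric rank-one convex minorants of a function $H:\Rsym\to\R$ are in bijection with those of $H\circ L_v$ via $\tilde g\mapsto \tilde g\circ L_v$. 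Taking the supremum over such minorants gives the identity
\[
R^{\mathrm{sym}}[H\circ L_v](\xi) = R^{\mathrm{sym}}[H](L_v\xi).
\]

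The second step applies this identity with $H(\eta):=\sqrt{1+|v|^2}\,\lambda^{-1/2}F_\lambda(\eta)$, for which $f_\lambda(v,\cdot)=H\circ L_v$. Since $\sqrt{1+|v|^2}$ is independent of $\eta$, it pulls out of $R^{\mathrm{sym}}$, giving
\[
R^{\mathrm{sym}} f_\lambda(v,\xi) = \sqrt{1+|v|^2}\,R^{\mathrm{sym}}[\lambda^{-1/2}F_\lambda](S(v,\xi)).
\]
Substituting the $v=0$ bound $R^{\mathrm{sym}}[\lambda^{-1/2}F_\lambda]\leq g_\lambda$ on the right-hand side yields $R^{\mathrm{sym}} f_\lambda(v,\xi)\leq \sqrt{1+|v|^2}\,g_\lambda(S(v,\xi)) = h_\lambda(v,\xi)$, as required.

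The only point requiring genuine care is the symmetric-rank-one-preservation property of $L_v$; it is essential here that we work with the \emph{symmetric} rank-one envelope, because the conjugation $\xi\mapsto A\xi A$ with $A$ symmetric only sends rank-one matrices to rank-one matrices when restricted to the symmetric subspace (a generic linear bijection of $\R^{2\times 2}$ need not preserve rank-one lines at all). With that observation in place the remaining steps are routine bookkeeping; in particular, no new laminate needs to be constructed by hand beyond the one already implicit in the $v=0$ treatment of \cite{olbermann2017michell}.
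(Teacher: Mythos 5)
Your argument is correct, but it follows a genuinely different route from the paper. The paper proves the bound for general $v$ by reconstructing the second-order laminate explicitly: it diagonalizes $S(v,\xi)$, builds the matrices $S_1,S_2,S_3$, pulls them back through the invertible linear map $\sigma\mapsto S(v,\sigma)$, and then optimizes over the two laminate parameters $\alpha,\beta$ to land exactly on $g_\lambda(S(v,\xi))$ (invoking Lemma B.7 of \cite{olbermann2017michell} to estimate $\Rs$ by laminates). You instead prove a change-of-variables identity for the symmetric rank-one envelope, $\Rs[H\circ L_v]=\Rs[H]\circ L_v$ with $L_v\xi=S(v,\xi)$, and then quote the $v=0$ bound $\Rs\bigl[\lambda^{-1/2}F_\lambda\bigr]\leq g_\lambda$ from the appendix of \cite{olbermann2017michell}; this is legitimate, since the $v=0$ case of the present lemma is precisely that statement and the paper asserts it is proved in detail there. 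The key fact underlying both proofs is the same and you isolate it cleanly: conjugation by the symmetric positive definite matrix $\gm(v)^{-1/2}$ (together with the scalar factor) maps the cone $\{\alpha\,\eta\otimes\eta\}$ into itself, so symmetric rank-one segments correspond to symmetric rank-one segments; the paper uses this implicitly when it asserts that $\xi_2-\xi_1$ and $\xi_3-(\alpha\xi_2+(1-\alpha)\xi_1)$ are symmetric rank-one ``by linearity''. Your approach buys a shorter argument with no re-derivation of the laminate optimization, at the price of relying on the cited $v=0$ result as a black box; the paper's approach is self-contained in the general-$v$ setting and makes the optimal laminate visible. One small imprecision in your commentary: the conjugation $\xi\mapsto A\xi A$ with $A$ invertible preserves rank on all of $\R^{2\times 2}$; the point that actually matters (and that a generic linear bijection of $\Rsym$ would not provide) is that for symmetric $A$ it maps $\eta\otimes\eta$ to $(A\eta)\otimes(A\eta)$, i.e.\ it preserves the \emph{symmetric} rank-one directions along which $\Rs$-convexity is tested. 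This does not affect the validity of your proof.
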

\begin{proof}
We write $\bar f_\lambda:=(1+|v|^2)^{-1/2}f_\lambda(v,\cdot)$, and need to show
$\Rs \bar f_\lambda\leq g_\lambda(S(v,\cdot))$.

Let $\xi\in \Rsym$. Then there exists an orthonormal basis $\tilde e_1,\tilde
e_2$ and $x,y\in \R$ such that
\[
S(v,\xi)=x\tilde e_1\otimes \tilde e_1+y \tilde e_2\otimes \tilde e_2\,.
\]
We may assume $|x|+|y|< \sqrt{\lambda}$, since otherwise we know
$\bar f_\lambda(\xi)=g_\lambda(S(v,\xi))=\sqrt{\lambda}+\frac{x^2+y^2}{\sqrt{\lambda}}$. Similarly, we may assume
$0<|x|+|y|$, since otherwise $\bar f_\lambda(\xi)=g_\lambda(S(v,\xi))=0$.
Let
$\alpha,\beta\in (0,1)$ to be chosen later, and set
\[
S_1=\left(\begin{array}{cc} 0&0\\0&0\end{array}\right)\,,\quad
S_2=\left(\begin{array}{cc} x/\alpha &0\\0&0\end{array}\right)\,,\quad
S_3=\left(\begin{array}{cc} x&0\\0&y/\beta\end{array}\right)\,,
\]
where all matrices are in the $\tilde e_1,\tilde e_2$ basis. Observing that  the map
$\sigma\mapsto S(v,\sigma)$  is linear and invertible, we may
choose $\xi_i$ such that $S(v,\xi_i)=S_i$ for $i=1,2,3$. 

Note that $\beta S_3+(1-\beta)(\alpha S_2+(1-\alpha) S_1))=S(v,\xi)$, and
$S_3-(\alpha S_2+(1-\alpha)S_1),S_2-S_1$ are both symmetric-rank-one. By
linearity, we also have that $\beta \xi_3+(1-\beta)(\alpha \xi_2+(1-\alpha)
\xi_1))=\xi$ and that $\xi_3-(\alpha\xi_2+(1-\alpha)\xi_1),\xi_2-\xi_1$ are
symmetric-rank-one. (These relations provide a ``laminate'' of order two.)
By Lemma B.7 in \cite{olbermann2017michell} (which states that $\Rs \bar
f_\lambda$ may be estimated from above by laminates of order $k$ for any $k\in\N$), 
we have
\[
\begin{split}
  \Rs \bar f_\lambda(\xi)\leq &\beta  \bar f_\lambda(\xi_3)+(1-\beta)\left(\alpha
    \bar f_\lambda(\xi_2)+(1-\alpha) \bar f_\lambda(\xi_1)\right)\\
  =&
  \beta\left(\sqrt{\lambda}+\frac{x^2+y^2/\beta^2}{\sqrt{\lambda}}\right)+(1-\beta)\alpha\left(\sqrt{\lambda}+\frac{x^2/\alpha^2}{\sqrt{\lambda}}\right)\,.
\end{split}
\]
Now we assume $|x|>0$. The right hand side in the last estimate is convex in $\alpha$; it attains its minimum at $\alpha=
\frac{|x|}{\sqrt{\lambda}}$. Hence, 
\[
\begin{split}
  \Rs \bar f_\lambda(\xi)\leq &\beta
  \left(\sqrt{\lambda}+\frac{x^2+y^2/\beta^2}{\sqrt{\lambda}}\right)+(1-\beta)2|x|\\
  =& 2|x|+\frac{\beta}{\sqrt{\lambda}}\left(\sqrt{\lambda}-\frac{|x|}{\sqrt{\lambda}}\right)^2+\frac{y^2/\beta}{\sqrt{\lambda}}
\end{split}
\]
Choosing $\beta=|y|/(\sqrt{\lambda}-|x|)$, we obtain
\[
\Rs  \bar f_\lambda(\xi)\leq 2\left(|x|+|y|-\frac{|xy|}{\sqrt{\lambda}}\right)=g_\lambda(S(v,\xi))\,.
\]
It remains to prove the claim for the case $|x|=0$. Then we have 
\[
\begin{split}
  \Rs  \bar f_\lambda(\xi)\leq &\beta  \bar f_\lambda(\xi_3)+(1-\beta) \bar f_\lambda(\xi_1)\\
  =&
  \beta\left(\sqrt{\lambda}+\frac{x^2+y^2/\beta^2}{\sqrt{\lambda}}\right)\,.
\end{split}
\]
Again setting $\beta=|y|/(\sqrt{\lambda}-|x|)$, we obtain the same conclusion as before.
This proves the lemma.
\end{proof}

\bibliographystyle{alpha}
\bibliography{graphs}

\newcommand{\etalchar}[1]{$^{#1}$}
\begin{thebibliography}{VSOdlC11}

\bibitem[ADM92]{ambrosio1992relaxation}
L.~Ambrosio and G.~Dal~Maso.
\newblock On the relaxation in {$BV (\Omega;\mathbb{R}^m)$} of quasi-convex
  integrals.
\newblock {\em J. Funct. Anal.}, 109(1):76--97, 1992.

\bibitem[AF84]{acerbi1984semicontinuity}
E.~Acerbi and N.~Fusco.
\newblock Semicontinuity problems in the calculus of variations.
\newblock {\em Archive for Rational Mechanics and Analysis}, 86(2):125--145,
  1984.

\bibitem[AFP00]{MR1857292}
L.~Ambrosio, N.~Fusco, and D.~Pallara.
\newblock {\em Functions of bounded variation and free discontinuity problems}.
\newblock Oxford Mathematical Monographs. The Clarendon Press, Oxford
  University Press, New York, 2000.

\bibitem[AK93]{allaire1993optimal}
G.~Allaire and R.~V. Kohn.
\newblock Optimal design for minimum weight and compliance in plane stress
  using extremal microstructures.
\newblock {\em European journal of mechanics. A. Solids}, 12(6):839--878, 1993.

\bibitem[Alb93]{alberti1993rank}
G.~Alberti.
\newblock Rank one property for derivatives of functions with bounded
  variation.
\newblock {\em Proceedings of the Royal Society of Edinburgh Section A:
  Mathematics}, 123(2):239--274, 1993.

\bibitem[Bon26]{bonnesen1926quelques}
T.~Bonnesen.
\newblock Quelques probl{\`e}mes isop{\'e}rim{\'e}triques.
\newblock {\em Acta Mathematica}, 48(1-2):123--178, 1926.

\bibitem[CK62]{caspar1962physical}
D.~L.~D. Caspar and Aaron Klug.
\newblock Physical principles in the construction of regular viruses.
\newblock In {\em Cold Spring Harbor Symposia on Quantitative Biology},
  volume~27, pages 1--24. Cold Spring Harbor Laboratory Press, 1962.

\bibitem[Dac08]{MR2361288}
B.~Dacorogna.
\newblock {\em Direct methods in the calculus of variations}, volume~78 of {\em
  Applied Mathematical Sciences}.
\newblock Springer, New York, second edition, 2008.

\bibitem[Del91]{delladio1991lower}
S.~Delladio.
\newblock Lower semicontinuity and continuity of functions of measures with
  respect to the strict convergence.
\newblock {\em Proceedings of the Royal Society of Edinburgh Section A:
  Mathematics}, 119(3-4):265--278, 1991.

\bibitem[Dem84]{demengel1984fonctions}
F.~Demengel.
\newblock Fonctions {\`a} hessien born{\'e}.
\newblock {\em Ann. I. Fourier}, 34(2):155--190, 1984.

\bibitem[Dem89]{demengel1989compactness}
F.~Demengel.
\newblock Compactness theorems for spaces of functions with bounded derivatives
  and applications to limit analysis problems in plasticity.
\newblock {\em Arch. Ration. Mech. Anal.}, 105(2):123--161, 1989.

\bibitem[FLP03]{fonseca2003lower}
I.~Fonseca, G.~Leoni, and R.~Paroni.
\newblock On lower semicontinuity in bh and 2-quasiconvexification.
\newblock {\em Calculus of Variations and Partial Differential Equations},
  17(3):283--309, 2003.

\bibitem[FM93]{MR1218685}
I.~Fonseca and S.~M{\"u}ller.
\newblock Relaxation of quasiconvex functionals in {${\mathrm
  BV}(\Omega,{\mathbf R}^p)$} for integrands {$f(x,u,\nabla u)$}.
\newblock {\em Arch. Ration. Mech. Anal.}, 123(1):1--49, 1993.

\bibitem[FMP98]{fonseca1998analysis}
I.~Fonseca, S.~M{\"u}ller, and P.~Pedregal.
\newblock Analysis of concentration and oscillation effects generated by
  gradients.
\newblock {\em SIAM journal on mathematical analysis}, 29(3):736--756, 1998.

\bibitem[Hut86]{hutchinson1986second}
J.~E. Hutchinson.
\newblock Second fundamental form for varifolds and the existence of surfaces
  minimising curvature.
\newblock {\em Indiana University Mathematics Journal}, 35(1):45--71, 1986.

\bibitem[KS86]{MR820342}
R.~V. Kohn and G.~Strang.
\newblock Optimal design and relaxation of variational problems. {I}, {II} and
  {III}.
\newblock {\em Comm. Pure Appl. Math.}, 39:113--137, 139--182,353--377, 1986.

\bibitem[LMN03]{lidmar2003virus}
J.~Lidmar, L.~Mirny, and D.~R. Nelson.
\newblock Virus shapes and buckling transitions in spherical shells.
\newblock {\em Physical Review E}, 68(5):051910, 2003.

\bibitem[MA99]{macgillivray1999structural}
L.~R. MacGillivray and J.~L. Atwood.
\newblock Structural classification and general principles for the design of
  spherical molecular hosts.
\newblock {\em Angewandte Chemie International Edition}, 38(8):1018--1033,
  1999.

\bibitem[Man96]{mantegazza1998curvature}
C.~Mantegazza.
\newblock Curvature varifolds with boundary.
\newblock {\em J. Diff. Geom.}, 43:807--843, 1996.

\bibitem[Mar85]{marcellini1985approximation}
P.~Marcellini.
\newblock Approximation of quasiconvex functions, and lower semicontinuity of
  multiple integrals.
\newblock {\em manuscripta mathematica}, 51(1-3):1--28, 1985.

\bibitem[Mey65]{MR0188838}
N.~G. Meyers.
\newblock Quasi-convexity and lower semi-continuity of multiple variational
  integrals of any order.
\newblock {\em Trans. Amer. Math. Soc.}, 119:125--149, 1965.

\bibitem[Min89]{minkowski1989volumen}
H.~Minkowski.
\newblock Volumen und {O}berfl{\"a}che.
\newblock In {\em Ausgew{\"a}hlte Arbeiten zur Zahlentheorie und zur
  Geometrie}, pages 146--192. Springer, 1989.

\bibitem[Olb17]{olbermann2017michell}
H.~Olbermann.
\newblock Michell trusses in two dimensions as a {$\Gamma$}-limit of optimal
  design problems in linear elasticity.
\newblock {\em Calculus of Variations and Partial Differential Equations},
  56(6):166, 2017.

\bibitem[SOdlC12]{PhysRevE.85.050501}
R.~Sknepnek and M.~Olvera de~la Cruz.
\newblock Nonlinear elastic model for faceting of vesicles with soft grain
  boundaries.
\newblock {\em Phys. Rev. E}, 85:050501, May 2012.

\bibitem[ST98]{savare1998superposition}
G.~Savar{\'e} and F.~Tomarelli.
\newblock Superposition and chain rule for bounded hessian functions.
\newblock {\em Advances in mathematics}, 140(2):237--281, 1998.

\bibitem[VSOdlC11]{PNAS}
G.~Vernizzi, R.~Sknepnek, and M.~Olvera de~la Cruz.
\newblock Platonic and archimedean geometries in multicomponent elastic
  membranes.
\newblock {\em PNAS}, 108:4292--4296, 2011.

\bibitem[YKH{\etalchar{+}}08]{yeates2008protein}
T.~O. Yeates, C.~A. Kerfeld, S.~Heinhorst, G.~C. Cannon, and J.~M. Shively.
\newblock Protein-based organelles in bacteria: carboxysomes and related
  microcompartments.
\newblock {\em Nature Reviews Microbiology}, 6(9):681--691, 2008.

\end{thebibliography}

\end{document}